\documentclass{amsart}
\usepackage{amsthm,amstext,amsmath,amscd,amssymb,latexsym}
\usepackage{color}
\usepackage[matrix,arrow]{xy}
\usepackage{amsfonts,enumerate,array}
\usepackage[colorlinks=true]{hyperref}
\usepackage{todonotes}
\usepackage{verbatim}

\newcommand{\red}[1]{\textcolor{red}{#1}}

\sloppy

\newcommand{\CC}{{\mathbb C}}
\newcommand{\PP}{{\mathbb P}}

\newcommand{\OO}{{\mathcal O}}
\newcommand{\LL}{{\mathcal L}}

\newcommand{\II}{{\mathcal I}}

\newcommand{\Bs}{{\rm{Bs}}}

\newcommand{\hh}{{\rm{h}}}
\newcommand{\HH}{{\rm{H}}}
\newcommand{\ls}{{\mathcal{L}}}
\newcommand{\ws}{{\mathcal{W}}}

\DeclareMathOperator{\HHilb}{Hilb}

\DeclareMathOperator{\vdim}{vdim}
\DeclareMathOperator{\edim}{edim}
\DeclareMathOperator{\ldim}{ldim}

\DeclareMathOperator{\Pic}{Pic}
\DeclareMathOperator{\Res}{Res}




\newcommand{\paper}{: \begin{it}}
\newcommand{\jour }{, \end{it}}

\newtheorem{theorem}{Theorem}[section]
\newtheorem{lemma}[theorem]{Lemma}
\newtheorem{proposition}[theorem]{Proposition}
\newtheorem{corollary}[theorem]{Corollary}
\newtheorem{conjecture}[theorem]{Conjecture}

\newtheorem{question}[theorem]{Question}
\theoremstyle{definition}
\newtheorem{definition}[theorem]{Definition}

\newtheorem{example}[theorem]{Example}

\theoremstyle{remark}
\newtheorem{remark}[theorem]{Remark}

\numberwithin{equation}{section}



\begin{document}

\title{On a notion of speciality of linear systems in $\PP^n$}

\author{Maria Chiara Brambilla}
\email{{\tt brambilla@dipmat.univpm.it}}
\address{Universit\`a Politecnica delle Marche, 
via Brecce Bianche, I-60131 Ancona, Italy}

\author{Olivia Dumitrescu}
\email{{\tt dolivia@math.ucdavis.edu}}
\address{University of California, Davis, Department of Mathematics, MSB 2107, Davis, CA 95616}

\author{Elisa Postinghel}
\email{{\tt epostinghel@impan.pl}}
\address{Institute of Mathematics of the Polish Academy
of Sciences, ul. \'Sniadeckich 8, P.O. Box 21, 00-956 Warszawa, Poland}

\thanks{The first author is partially supported by Italian MIUR funds.
The second author is member of ``Simion Stoilow'' Institute of Mathematics 
of the Romanian Academy (http://www.imar.ro/).
The third author was partially supported by Marie-Curie IT Network SAGA, [FP7/2007-2013] grant agreement PITN-GA-
2008-214584. All authors were partially supported by Institut Mittag-Leffler.}

\keywords{Linear systems, Fat points, Base locus, Linear speciality, Effective cone}

\subjclass[2010]{Primary: 14C20. Secondary: 14J70, 14C17.}

\begin{abstract}
Given a linear system  in $\PP^n$ with assigned multiple general points we compute the cohomology groups
of its strict transforms  via the blow-up of its linear base locus. 
This leads us to give a new definition of expected dimension of a linear system, which takes into account the contribution of the linear base locus, and thus to introduce the notion of linear speciality. We investigate such a notion giving sufficient conditions for a linear system to be linearly non-special for arbitrary number of points, and necessary conditions for small numbers of points.
\end{abstract}

\maketitle

\section{Introduction}

The study of linear systems of hypersurfaces in
complex projective spaces with finitely many assigned base points of given multiplicities 
is a fundamental problem in algebraic geometry, related to polynomial interpolation in several variables,
to the Waring problem for polynomials and to the classification of defective 
higher secant varieties to projective varieties.

Let $\LL=\LL_{n,d}(m_1,\ldots,m_s)$ be the linear system of 
hypersurfaces of degree $d$ in $\PP^n$ passing through a general union of $s$ points with multiplicities respectively 
$m_1,\ldots,m_s$.
The {\em virtual dimension} of $\LL$ is
$$\vdim(\LL)=\binom{n+d}{n}-\sum_{i=1}^s\binom{n+m_i-1}{n}-1,$$
the {\em expected dimension} of $\LL$ is $\edim(\LL)=\max(\vdim(\LL),-1)$.
The dimension of $\LL$ is upper-semicontinuous in the position of the points in $\PP^n$;
 it achieves its minimum value when they are in general position.  The inequality
$\dim(\LL)\ge\edim(\LL)$ is  always satisfied.
If the conditions imposed by the assigned points are not linearly independent, then the actual dimension of $\LL$ is strictly
 greater that the expected one: 
in that case we say that $\LL$  (or a divisor $D$ in $\LL$) is \emph{special}. Otherwise, if the actual and the expected dimension coincide, 
we say that $\LL$ is \emph{non-special}.

Recall that if $Z$ is a collection of general fat points in $\PP^n$ of multiplicities $m_1,\ldots,m_s$,
then the sheaf  associated to the linear system $\LL$ is  $\OO_{\PP^n}(d)\otimes\II_Z$. For this reason, 
by abuse of notation, we will use the same letter $\LL$ to denote such a sheaf, when no confusion arises.
From the restriction exact sequence of sheaves
$$
0\rightarrow \LL\rightarrow \OO_{\PP^n}(d)\rightarrow{\OO}_{Z}\to0,
$$
taking cohomology we get 
$$
0\rightarrow H^0(\PP^n,\LL)\rightarrow H^0(\PP^n,\OO_{\PP^n}(d))\rightarrow  H^0(Z,{\OO}_{Z})\rightarrow H^1(\PP^n,\LL)\rightarrow 0,
$$
being $h^1(\PP^n,\OO_{\PP^n}(d))=0$. Moreover we obtain $h^i(\PP^n,\LL)=0$, for all $i\ge 2$. 
Thus $\LL$ is non-special if and only if 
$$
h^0(\PP^n,\LL)\cdot h^1(\PP^n,\LL) = 0.
$$

The problem of classifying  special linear systems attracted the attention of many researchers in the last century.
In the case of the plane, the well-known Segre-Harbourne-Gimigliano-Hirschowitz conjecture describes all 
special linear systems, but even in this case, in spite of many partial results 
(see e.g.\ \cite{Ciliberto}, \cite{CHMR} and the reference therein), the conjecture is still open. 
In the case of $\PP^3$, there is an analogous conjecture formulated by Laface and Ugaglia, 
see Section \ref{section6.2} for more details.

In the direction of extending such conjectures to $\PP^n$, and possibly to 
other projective varieties, we start with a very natural and general question.
\begin{question}\label{question}
Consider any non-empty linear system $\LL$ in $\PP^{n}$ and a divisor $D\in \LL$.
Denote by $\widetilde{D}$ the strict transform of $D$  in 
the blow-up $X$ of $\PP^n$ along the base locus of $\LL$. 
Is $\widetilde{D}$ non-special, namely, does $h^i(X,\mathcal{O}_{X}(\widetilde{D}))$ vanish for all $i\ge 1$?
\end{question}

In order to answer this question one has to tackle two problems:
the first one is to describe the base locus of a linear system, the second one is to understand the contribution given by each component of the base locus to the speciality of the linear system.

In this paper we start considering the case when the base locus of the linear system is linear, that is given by the union of linear subspaces of $\PP^n$  of dimensions $1\le r\le n-1$. 
In this case, it is possible to give arithmetical criteria, 
see Section \ref{base-locus-section}, 
which tell when a linear subspace
is contained in the base locus and with which multiplicity. 
Moreover  the contribution to the speciality of the linear system is easy to compute. 

In order to show this, let us consider first an example.
The well-known Alexander-Hirschowitz theorem states that a linear system in $\PP^n$ with only double points is special
 either if the degree is $2$ and the number of points is $2\le s\le n$, or if the linear system is one of 
the four exceptional cases, see e.g. \cite{ale-hirsch},\cite{ale-hirsch2} for more details.
The first case is easily understood,
indeed the linear system $\LL=\LL_{n,2}(2^s)$ satisfies
$\vdim(\LL)={{n+2}\choose2}-s(n+1)-1$.
On the other hand it is easy to see that any hypersurface in $\LL$ is a quadric cone with vertex the linear subspace $\PP^{s-1}$,
hence $\dim(\LL)=\binom{n-s+2}{2}-1$, which is the dimension
of the complete linear system of quadric hypersurfaces in $\PP^{n-s}$. 
If $s=2$, then $\dim(\LL)-\vdim(\LL)=1$; 
therefore we may conjecture that the double line 
which is contained in the base locus gives an obstruction of $1$.
Similarly if $s=3$, then $\dim(\LL)-\vdim(\LL)=3$ and the presence of three double lines in the base locus, each of them 
contribuiting by $1$, would highlight the same phenomenon;
it is interesting to note that the plane spanned by the three base
points, which is doubly contained in the base locus as well, does not give any contribution.

The example of quadrics can be extended to any linear system with degree $d$ and 
points of multiplicities $d$, see Section \ref{cones-section}. 
More generally, any linear subspace 
$\PP^r$ which is contained in the base locus with multiplicity $k\ge r+1$ gives
a contribution to the speciality of $\LL$,
 which depends on $r$ and $k$.

This suggests a new extended definition of expected 
dimension which takes in account such {\it linear obstructions}. We call 
this new notion {\em linear  expected dimension} of $\LL$ and we denote it by $\ldim(\LL)$
(see Definition \ref{new-definition} for the precise formulation).
Consequently, we  say that $\LL$ is {\em linearly special} if $\dim(\LL)\neq\ldim(\LL)$.
In this sense the quadrics with double points are no longer special.

The linear expected dimension is meant to be a refined version of the expected dimension. 
We surmise   
to have $\edim(\ls)\le \ldim(\ls)\le \dim(\ls)$.
The second inequality is in fact also predicted by the so-called {\it weak Fr\"oberg-Iarrobino conjecture},
see Section \ref{section6.1} and \cite{Chandler} for more details.

In this paper we investigate the notion of linear speciality and we 
study conditions for a linear system to be not linearly special. 

The first step in this direction, which also allows to partially answer 
Question \ref{question}, 
is to give a detailed description of the cohomology of the strict transform 
of $\LL$ with respect to the blow-up of $\PP^n$ along the linear base locus. 
In particular we will show that any $r$-dimensional cycle in the base locus 
of $\LL$ gives a contribution at the level of the $r$-th cohomology group 
of the strict transform of $\LL$, after blowing up all linear 
base cycles of dimension at most $r-1$. 
See Theorem \ref{monster-theorem} for a more precise statement. 

A consequence of this result is that
a non-empty linear system $\LL_{n,d}(m_1,\ldots,m_s)$
in $\PP^n$ is linearly non-special as soon as $s\le n+2$, 
(see Corollary \ref{corollary-monster}).

In our opinion the description of the cohomology groups of the strict transform of the linear system is 
our main original contribution to this subject. 
See Section \ref{section6.1}
for a discussion on the connections between our approach
and the Fr\"oberg-Iarrobino conjecture, and with a comparison with previous results by Chandler.

When the points are more than $n+2$, we are able to give a sufficient condition for
 a linear systems to be linearly non-special,
see Theorem \ref{theorem-n+3} for the precise statement.
In this case, it is easy to find linear systems which are linearly special. 
For instance the speciality might be given by rational normal curves or by quadric hypersurfaces
contained in the base locus. 
In Section \ref{section6.2} we give a short account on future directions of our research.

We would like to point out, finally, that our results have also an interesting interpretation in the
setting of moduli space of stable rational curves with marked points,  
see Section \ref{section6.3} for more details.
  
The paper is organized as follows.
In Section \ref{base-locus-section} we describe the linear components of the base locus of any linear system.
In Section \ref{cones-section} we give the definition of linear expected dimension and of linear speciality, and we discuss some basic examples.
Sections \ref{toric-case-section} and \ref{n+3-section} contain the main results of the paper; precisely Theorem \ref{monster-theorem} and 
Corollary \ref{corollary-monster} are
devoted to the case $s\le n+2$, while Theorem \ref{theorem-n+3} concerns the case $s\ge n+3$.
In Section \ref{final-section}
we discuss the links with other approaches, interesting connections
and future directions.

\subsection{Acknowledgements} The authors would like to thank the Institut Mittag-Leffler, Stockholm (Sweden),
and the Universit\`a Politecnica delle Marche, Ancona (Italy), 
for the hospitality during their stay that promoted and made this collaboration possible. 
We would also like to express our gratitude to Edoardo Ballico, Ana-Maria Castravet, Renzo Cavalieri, Igor Dolgachev, Antonio Laface, 
Giorgio Ottaviani for their remarks and comments regarding this project.
We thank the anonymous referee for useful comments and suggestions.

\section{The linear components of the base locus}
\label{base-locus-section}

Let  $\LL:=\LL_{n,d}(m_{1},...,m_{s})$ be a linear system with multiple base points supported 
at general points $p_1,\dots,p_s\in \PP^n$. Let moreover 
$I(r)\subseteq\{1,\dots,s\}$ be any  multi-index
   of length $|I(r)|=r+1$, for $0\le r\le \min(n,s)-1$. We denote 
by $L_{I(r)}$ the unique $r$-linear cycle through the points $p_i$, for $i \in I(r)$.
We introduce the following notation:
\begin{equation}\label{definition-of-k}
k_{I(r)}:=\max\left(\left(\sum_{i\in I(r)}m_i\right)-rd, 0\right).
\end{equation}

The following lemma is equivalent to \cite[Cor. 5.2]{Chandler},  
however we include here the proof for the sake of completeness.
\begin{lemma}[Linear Base Locus Lemma]
\label{base-locus-lemma}
Let $\LL:=\LL_{n,d}(m_{1},...,m_{s})$ be a non-empty linear system. In the notation of above, assume that $0\le r\le n-1$ and
$k_{I(r)}>0$. Then $\LL$ contains in its base locus the cycle $L_{I(r)}$ 
with multiplicity at least $k_{I(r)}$.
\end{lemma}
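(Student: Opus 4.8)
The plan is to show that every hypersurface $D$ in the linear system $\LL$ vanishes to order at least $k_{I(r)}$ along the linear cycle $L_{I(r)}$. Fix the multi-index $I(r)=\{i_0,\dots,i_r\}$ and write $L:=L_{I(r)}$. First I would choose coordinates on $\PP^n$ so that the $r+1$ general points $p_{i_0},\dots,p_{i_r}$ become the standard coordinate points spanning the linear subspace $L=\{x_{r+1}=\dots=x_n=0\}$. Since the points are general, this normalization is harmless. The goal is then a purely local/combinatorial computation: I want to measure, along $L$, the minimal vanishing order forced by the simultaneous passage through the $p_{i_j}$ with multiplicities $m_{i_j}$.

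The key step is a dimension-count on monomials. Consider a defining polynomial $F$ of degree $d$ for $D\in\LL$. The condition that $D$ has multiplicity $m_{i_j}$ at $p_{i_j}$ says that all partial derivatives of $F$ of order $< m_{i_j}$ vanish at $p_{i_j}$. I would restrict attention to a generic point of $L$ and set up an affine chart adapted to the normal directions $x_{r+1},\dots,x_n$ transverse to $L$. Expanding $F$ in a Taylor series along $L$, the claim is that the multiplicity conditions at the $r+1$ vertices $p_{i_j}$ propagate to force vanishing of order $\sum_{j} m_{i_j}-rd$ along the whole of $L$. The cleanest way to see this is to parametrize a general line contained in $L$ (or the line $L$ itself when $r=1$) and show that $F$ restricted to this line is a degree-$d$ polynomial in one variable which is forced to vanish to high order at the images of the $p_{i_j}$; an Abel/interpolation-type count then bounds the vanishing order along $L$ from below by $\left(\sum_{j\in I(r)} m_{i_j}\right)-rd$.

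Concretely, for the case $r=1$ the argument reduces to the following: the line $L$ through $p_{i_0}, p_{i_1}$ meets $D$ (if $L\not\subseteq D$) in a degree-$d$ scheme, but the multiplicity conditions force contact of order $m_{i_0}$ at $p_{i_0}$ and $m_{i_1}$ at $p_{i_1}$, giving $m_{i_0}+m_{i_1}$ points of intersection counted with multiplicity; when $m_{i_0}+m_{i_1}>d$ this exceeds $d=\deg(F|_L)$, forcing $F|_L\equiv 0$, i.e. $L\subseteq \Bs(\LL)$, and a refined count on the normal bundle gives multiplicity at least $k_{I(1)}=m_{i_0}+m_{i_1}-d$. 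For general $r$ I would iterate this by a Bézout-type argument on the $\PP^r=L$ itself: restricting $F$ to $L$ yields a form of degree $d$ on $\PP^r$ that is singular of multiplicity $m_{i_j}$ at each of the $r+1$ coordinate vertices, and a standard fact (the analogue used in the quadric-cone example in the introduction) bounds its multiplicity along $L$.

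The main obstacle I anticipate is organizing the multiplicity bookkeeping in the normal directions correctly: vanishing of order $k$ \emph{along} $L$ is a statement about all Taylor coefficients of $F$ in the transverse variables $x_{r+1},\dots,x_n$ up to total transverse degree $k-1$, not merely about the restriction $F|_L$. The passage from "high contact at the vertices" to "uniform transverse vanishing along all of $L$" requires showing that the generators of the ideal of multiplicity-$m_{i_j}$ vanishing at the individual points $p_{i_j}$ combine, via the genericity of the points, to produce the full transverse vanishing; making this combination precise—rather than only bounding the restriction to $L$—is the delicate point. I expect that the cleanest route is to reduce to the one-variable interpolation estimate along generic lines sweeping out a neighborhood of $L$ and to invoke semicontinuity of multiplicity, but the careful indexing of which derivative conditions survive after restriction is where the real work lies.
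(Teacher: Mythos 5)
Your $r=1$ case is correct and coincides with the paper's base case; the ``refined count on the normal bundle'' you gesture at is simply the derivative trick: for $t<m_{i_0}+m_{i_1}-d$, each order-$t$ partial of $F$ has degree $d-t$ and vanishes to orders at least $m_{i_0}-t$ and $m_{i_1}-t$ at the two points, and these sum to more than $d-t$, so by B\'ezout the partial vanishes identically on the line; hence all partials of order $<k_{I(1)}$ vanish along $L$. The genuine gap is in the case $r\ge2$, and it is exactly the obstacle you flag in your final paragraph --- but that obstacle is not a bookkeeping detail, it is the entire content of the lemma. Restricting $F$ to $L=L_{I(r)}$ can never detect a multiplicity of containment larger than one: $F|_L\equiv0$ carries no information about the transverse derivatives of $F$. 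For instance, for $\LL_{n,2}(2^3)$ with $n\ge3$, the plane $L$ spanned by the three double points has $k_{I(2)}=2$, and indeed every quadric in the system is singular along $L$ (it is a cone with vertex containing $L$), but no statement about the form $F|_L$ on $L\cong\PP^2$ can see that factor of $2$. Your fallback (``generic lines sweeping out a neighborhood of $L$ plus semicontinuity'') also fails as stated: a general line near $L$, or even a general line inside $L$, passes through none of the assigned points, so $F$ restricted to it satisfies no vanishing conditions at all. As it stands, your proposal proves only $L_{I(r)}\subseteq\Bs(\LL)$ for $r\ge2$.

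The idea you are missing is the paper's cone induction, which sidesteps all transverse Taylor bookkeeping. Order the indices so that $I(r)=\{1,\dots,r+1\}$ and view $L_{I(r)}$ as the cone over $L_{I(r-1)}$ with vertex $p_{r+1}$; note that $k_{I(r-1)}\ge k_{I(r)}>0$ because $d\ge m_{r+1}$. By induction on $r$, every point $q\in L_{I(r-1)}$ has multiplicity at least $k_{I(r-1)}$ on every $D\in\LL$. Now apply your own $r=1$ argument to the line $\langle q,p_{r+1}\rangle$: that argument uses only that $D$ has prescribed multiplicities at two points of the line (they need not be assigned base points), and it yields that this line is contained in $D$ with multiplicity at least $k_{I(r-1)}+m_{r+1}-d=k_{I(r)}$. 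Such lines sweep out the cone $L_{I(r)}$, and the multiplicity of containment of $L_{I(r)}$ in $D$ is just the multiplicity of $D$ at a general point of $L_{I(r)}$ --- which lies on one of these lines and is therefore at least $k_{I(r)}$. The transverse vanishing is thus produced pointwise, as a local multiplicity along a swept line through an assigned vertex and the lower-dimensional cycle, so no indexing of which derivative conditions survive restriction is ever needed.
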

\begin{proof}
We use induction on $r$. If $r=1$ the statement follows from B\'ezout's theorem. If $r>1$ consider the $r$-cycle
$L_{I(r)}$ spanned by $p_1,\dots, p_r,p_{r+1}$ as the cone over the $(r-1)$-cycle $L_{I(r-1)}$ spanned 
by $p_1,\dots,p_r$ and with vertex $p_{r+1}$. Notice that $k_{I(r-1)}\geq k_{I(r)}$, being $d\ge m_{r+1}$.
 By the inductive hypothesis, any point $q\in L_{I(r-1)}$ 
 is contained in the base locus of $\LL$  with multiplicity at least $k_{I(r-1)}=m_{1}+...+m_{r}-(r-1)d>0$. 
Therefore the line spanned by $q$ and $p_{r+1}$ is contained in the base locus 
with multiplicity at least $k_{I(r-1)}+m_{r+1}-d=k_{I(r)}$. 
\end{proof}

Given the general points $p_1,\dots,p_s$ in $\PP^n$, we denote by
\[\pi_{(0)}^n:X_{(0)}^n\to\PP^n\]
the blow-up of $\PP^n$ at $p_1,\dots,p_s$, with $E_1,\dots,E_s$ exceptional divisors.
The index $(0)$ indicates that the space $\PP^n$ is blown-up at $0$-dimensional 
schemes; 
in the same way  
$X^n_{(r)}$ will denote the $n$-dimensional projective space
blown-up along arrangements of linear cycles  of dimension $\le r$ spanned by the points $p_i$
(see Section \ref{section4.1} for more details).

The Picard group of $X_{(0)}^n$ is spanned by the class $H$ of a general hyperplane  and the
 exceptional divisors $E_{i}$, $i=1,\dots,s$. As in \cite{castravet-tevelev} we introduce a symmetric bilinear form 
on the blown up $\PP^{n}$ by  
$$\langle E_{i}, E_{j}\rangle=-\delta_{i,j}, \ \langle E_{i}, H\rangle=0, \ \langle H,H\rangle= n-1.$$

We recall that the standard Cremona transformation along  the coordinate points of $\PP^{n}$ is defined to be the birational map
$$[x_0, \dots, x_{n}]\rightarrow[x_{1}\dots x_{n}, \dots, x_{0}\dots x_{n-1}].$$
This map is given by the linear system of hypersurfaces of degree $n$
with multiplicity $n-1$ at each of the $n+1$ coordinate points.  
Moreover it induces an automorphism of the Picard group of the  blow-up $X_{(0)}^{n}$ 
at $s\geq n+1$ points by sending any divisor
$dH-m_{1}E_{1}-\dots-m_{s}E_{s}$
to
$$(d-c)H-(m_1-c)E_{1}-\dots-(m_{n+1}-c)E_{n+1}-m_{n+2}E_{n+2}-\dots-m_{s}E_{s},$$
where $c=m_{1}+\dots+m_{n+1}-(n-1)d$ and the first $n+1$ points are chosen to be the coordinate points of $\PP^{n}$.

Let $W$ denote the Weyl group of the blow-up $X_{(0)}^{n}$ at $s$ points. 
Recall that every element of 
 $W$ corresponds to a birational map of $\PP^{n}$ lying in the group generated by standard 
Cremona transformations and projective automorphisms of $\PP^{n}$. 
For more information on the properties of the Cremona transformations and the Weyl group
see  \cite[Section 2.3]{CDD} or \cite{Dolgachev, Dumnicki}.

The study of the effective cone of a linear system is extremely difficult in general, 
for instance in the case $n=2$ and $s\geq 10$ Nagata's conjecture as well as the computation of the effective 
cone $\overline{M_{0,n}}$ are still open problems (see also Section \ref{section6.3}).
In the following lemma we describe the effective cone of a linear system with a small number of points and 
we briefly sketch the proof, see also \cite[Lemma 4.8]{CDD}.

\begin{lemma}[Effectivity Lemma]\label{effectivity}
A linear system $\LL=\LL_{n,d}(m_1,\dots,m_s)$ is non-empty if 
\begin{equation}\label{condition-mult}
m_i\le d \quad \forall i\quad \mbox{ and }\quad \sum_{i=1}^s m_i\le nd.
\end{equation} 
Moreover if $s\leq n+2$, then $\LL$ is non-empty 
if and only if conditions \eqref{condition-mult} are satisfied.
In particular the faces of the effective cone of the blow-up of $\PP^{n}$
 at $s$ points are given by 
$\{d= m_1\},\dots,\{ d= m_{s}\}$
if $s\le n$ and by 
$\{d= m_1\},\dots,\{ d= m_{s}\},\{ nd=\sum_{i=1}^s m_{i}\}$
if $s=m+1,n+2$. 
\end{lemma}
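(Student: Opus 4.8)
The plan is to prove the three assertions in turn: sufficiency of \eqref{condition-mult} for non-emptiness (which holds for all $s$), necessity when $s\le n+2$, and the resulting facet description.

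\emph{Sufficiency.} I would exhibit an explicit effective divisor, taken to be a union of $d$ hyperplanes. The idea is to assign to each point $p_i$ a set $C_i\subseteq\{1,\dots,d\}$ of ``hyperplane labels'' with $|C_i|=m_i$, in such a way that each label $j\in\{1,\dots,d\}$ is used by at most $n$ of the points. Given such an assignment, one lets $H_j$ be a hyperplane through the at most $n$ points $\{p_i : j\in C_i\}$ (which lie on a hyperplane precisely because at most $n$ general points of $\PP^n$ do) and takes the product $f=\prod_{j=1}^d \ell_j$ of defining linear forms: it has degree $d$ and multiplicity at least $m_i$ at each $p_i$, so $f\in\LL$. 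The existence of the assignment is a transportation/feasibility statement that I would write out carefully: listing the $\sum_i m_i$ required incidences as consecutive tokens (the $m_i$ tokens of $p_i$ forming one block) and sending the $t$-th token to label $(t\bmod d)+1$, the hypothesis $m_i\le d$ guarantees that the $m_i$ tokens of $p_i$ land on distinct labels, while $\sum_i m_i\le nd$ guarantees that no label receives more than $\lceil (\sum_i m_i)/d\rceil\le n$ tokens.

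\emph{Necessity for $s\le n+2$.} That $m_i\le d$ is forced is the elementary fact that a nonzero form of degree $d$ cannot vanish to order $>d$ at a point, so it holds for all $s$. For $\sum_i m_i\le nd$ I would split into three cases. If $s\le n$ it is automatic, since $\sum_{i=1}^s m_i\le s\,d\le n\,d$. If $s=n+1$ I would apply the standard Cremona transformation based at all $n+1$ points: it sends the effective class $dH-\sum m_iE_i$ to a class of degree $d'=nd-\sum_{i=1}^{n+1}m_i$, and since the points are general this operation preserves non-emptiness, so the image is again effective and hence $d'\ge 0$, which is exactly $\sum m_i\le nd$. If $s=n+2$ I would perform the Cremona transformation based at $p_1,\dots,p_{n+1}$ only: the resulting system is non-empty, has degree $d'=nd-\sum_{i=1}^{n+1}m_i$, and still carries the untouched point $p_{n+2}$ with multiplicity $m_{n+2}$; applying the already-established bound ``multiplicity $\le$ degree'' to this image yields $m_{n+2}\le d'$, i.e.\ $\sum_{i=1}^{n+2}m_i\le nd$. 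This last case, where a trivial bound applied to the \emph{Cremona image} produces the nontrivial inequality on the original system, is the step I expect to be the main obstacle; it relies essentially on the fact, recalled above and in \cite[Section 2.3]{CDD}, that the Weyl group action preserves the dimension of linear systems with general base points.

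\emph{Facets.} Combining the two implications, for $s\le n+2$ the locus of non-empty systems is cut out exactly by \eqref{condition-mult}, so the effective cone is the polyhedral cone these inequalities define, and it remains to decide which are facets. When $s\le n$ the inequality $\sum m_i\le nd$ is redundant by the computation $\sum m_i\le sd\le nd$ above, leaving the $s$ facets $\{d=m_i\}$; each of these is irredundant, as one sees by producing a class with $m_i=d$ and the remaining multiplicities small. When $s=n+1$ or $s=n+2$ the inequality $\sum m_i\le nd$ becomes irredundant: the balanced class $d=s$, $m_1=\dots=m_s=n$ satisfies $\sum m_i=ns=nd$ while lying strictly inside every $\{m_i<d\}$ (since $n<s=d$), so it provides a relative interior point of the corresponding face. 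This yields precisely the facets listed in the statement.
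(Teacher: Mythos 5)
Your proof is correct, but it diverges from the paper's argument at two of the three steps, and in one place it is actually more careful than the paper. For sufficiency, the paper simply cites \cite[Lemma 4.24]{castravet-tevelev}, whereas you give a self-contained construction (a union of $d$ hyperplanes governed by a cyclic label assignment, with $m_i\le d$ ensuring distinct labels within each block and $\sum m_i\le nd$ ensuring each hyperplane is asked to contain at most $n$ points); this buys independence from the reference at the cost of a combinatorial verification, which you carry out correctly. For necessity with $s=n+1$, the paper does not use Cremona at all: it invokes Lemma \ref{base-locus-lemma} to split off each fixed hyperplane $H_i$ with multiplicity $k(i)=\sum_{j\ne i}m_j-(n-1)d$ and computes that the residual degree $d-\sum_i k(i)=n(nd-\sum_i m_i)$ is negative, a contradiction; your Cremona argument ($d'=nd-\sum_{i=1}^{n+1}m_i\ge 0$ by effectivity of the image) is shorter and uniform with the next case, but both rest on the same background fact that the Weyl group action preserves effectivity for general points. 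For $s=n+2$ both proofs apply the Cremona map based at the first $n+1$ points, but the paper then claims the transformed degree $d-c=nd-\sum_{i=1}^{n+1}m_i$ is negative, which does not follow from $nd<\sum_{i=1}^{n+2}m_i$ (one only gets $d-c<m_{n+2}$; e.g.\ for $\LL_{2,3}(2^3,1)$ the transformed degree is $0$). Your step --- applying the bound ``multiplicity $\le$ degree'' at the untouched point $p_{n+2}$ of the image, giving $m_{n+2}\le d-c$ --- is the correct way to close this case, so on this point your write-up repairs the paper's sketch rather than merely reproducing it. The facet discussion is essentially the same in both, with your explicit balanced witness $(d;m_1,\dots,m_s)=(s;n,\dots,n)$ supplying the irredundancy of $\{nd=\sum m_i\}$ that the paper leaves implicit.
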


\begin{proof}
By \cite[Lemma 4.24]{castravet-tevelev}, we know that 
conditions \eqref{condition-mult} imply that $\LL$ is not empty. 

Conversely, consider an effective divisor $D\in \LL$ and note that
we must have $d\geq m_1,\dots, d\geq m_{s}$. 
Assume now by contradiction that $nd<\sum_{i=1}^s m_{i}.$ 

If $s=n+1$, then since $d\geq m_i$, by 
Lemma \ref{base-locus-lemma}
we get that each of the $n+1$ hyperplanes $H_i$, spanned by all but the $i$-th point, 
is in the base locus at least
$k(i)= m_1+\dots +\widehat{m_i}+\dots +m_{n+1}-(n-1)d>0$ times.
We obtain $D=\sum_{i=1}^{ n+1} k(i)H_{i}+\Res$, where $\Res$ is the residual system.
But the degree of $\Res$ is negative, indeed it is
$$d-\sum_{i=1}^{n+1} k(i)= d- \sum_{i=1}^{n+1} (m_1+\dots \widehat
{m_i}+\dots +m_{n+1}-(n-1)d)=$$
$$=d-(n\sum_{i=1}^{n+1} m_{i}-(n+1)(n-1)d)=n(nd-\sum_{i=1}^{n+1}m_i)<0,$$
therefore $D$ is not effective and this gives a contradiction.

If $s=n+2$, then since $nd<\sum_{i=1}^{n+2} m_{i}$ and $d\geq m_{n+2}$, we have 
$c=\sum_{i=1}^{n+1} m_{i}-(n-1)d>0$. 
So performing a Cremona transformation based at the first $n+1$ points, 
since we have $d-c=nd-\sum_{i=1}^{n+1} m_{i}<0$, we obtain a linear system with 
negative degree, so $D$ is again not effective, and we have a contradiction.

Notice, finally,  that if $s\leq n$ the inequality $\sum_{i=1}^s m_i\le nd$  is redundant. 
\end{proof}

We now improve Lemma \ref{base-locus-lemma}, giving results which 
predict the exact multiplicity with
which a cycle is contained in the base locus of the linear system.

The following proposition is more general, concerning also non linear divisors, anyway 
it applies in particular to the case of hyperplanes $L_{I(n-1)}$.

\begin{proposition}\label{-1divisors} 
Let $\LL_{n,d}(m_{1},...,m_{s})$ be a linear system and $D\in\LL$.
Let $W$ be the Weyl  group of the blow-up, $X_{(0)}^n$, at the $s$ base points of $\LL$ 
and let $F$ be a divisor in any Weyl orbit of $E_{i}$. 
If $\langle D, F\rangle\leq 0$, then $F$ is contained in the base locus of $\LL$ 
with multiplicity $-\langle D,F\rangle$.
\end{proposition}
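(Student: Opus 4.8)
The plan is to reduce to the case $F=E_i$ using the $W$-equivariance, and then to compute the base multiplicity of an exceptional divisor directly. First I would write $F=w(E_i)$ for some $w\in W$ and some index $i$. Since the standard Cremona transformations and the permutations generating $W$ act as isometries of $\langle\,,\,\rangle$, we have $-\langle D,F\rangle=-\langle w^{-1}D,E_i\rangle$. Setting $D':=w^{-1}D$ and writing $D'=d'H-\sum_j m'_j E_j$, this reads $-\langle D,F\rangle=-\langle D',E_i\rangle=-m'_i$, so the hypothesis becomes $m'_i\le 0$. The element $w^{-1}$ is realized by a birational map $\varphi$ (a composition of standard Cremona transformations and projective automorphisms) sending $\LL$ to the linear system $\LL'$ whose general member has class $D'$, and sending $F$ to $E_i$. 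On a common resolution $\varphi$ is an isomorphism in codimension one, so it matches the divisorial part of $\Bs\LL$ with that of $\Bs\LL'$; in particular the multiplicity of $F$ in $\Bs\LL$ equals the multiplicity of $E_i$ in $\Bs\LL'$. It therefore suffices to treat $F=E_i$.

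For the base case I would argue on the blow-up $X^n_{(0)}$. Take a line $\ell\subset E_i\cong\PP^{n-1}$ and use the intersection numbers $H\cdot\ell=0$, $E_j\cdot\ell=0$ for $j\ne i$, and $E_i\cdot\ell=-1$, which give $D'\cdot\ell=m'_i\le 0$. Let $\mu\ge 0$ be the multiplicity of $E_i$ in $\Bs|D'|$ and write $D'=\mu E_i+R$ with $E_i\not\subseteq\Bs|R|$; intersecting with $\ell$ yields $m'_i=-\mu+R\cdot\ell$ with $R\cdot\ell\ge 0$, hence $\mu\ge -m'_i$. For the reverse inequality I set $R_0:=D'-(-m'_i)E_i$, whose $E_i$-coefficient is $0$, so $R_0=\pi_i^*\bar R$ is pulled back along the contraction $\pi_i\colon X^n_{(0)}\to X$ of $E_i$ (here $X$ is the blow-up of $\PP^n$ at the remaining points) for a class $\bar R$ involving only $H$ and the $E_j$ with $j\ne i$. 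By the projection formula $|R_0|=\pi_i^*|\bar R|$, so a general member of $|R_0|$ is the total transform of a general member of $|\bar R|$; since $p_i$ is a general point it avoids $\Bs|\bar R|$, which is cut out by the other points, and hence the general member of $|R_0|$ does not contain $E_i$. Thus $\mu=-m'_i=-\langle D,F\rangle$, as claimed.

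The hard part will be the codimension-one comparison in the first step: I must check that the divisorial components of the base locus, together with their multiplicities, are transported correctly by the Cremona maps, which for $n\ge 3$ are \emph{not} isomorphisms (they contract the strict transforms of coordinate hyperplanes). I would make this precise by passing to the blow-up $X^n_{(r)}$ of $\PP^n$ along the whole arrangement of linear cycles spanned by the points (Section \ref{section4.1}), on which a standard Cremona based at $n+1$ points becomes an isomorphism in codimension one, exchanging each $E_i$ with the strict transform of the opposite hyperplane and permuting the lower-dimensional exceptional divisors; divisorial base multiplicities are then literally preserved. As a consistency check, when $F$ is the strict transform of the hyperplane $L_{I(n-1)}$ through $n$ of the points one computes $-\langle D,F\rangle=\bigl(\sum_{i\in I(n-1)}m_i\bigr)-(n-1)d=k_{I(n-1)}$, so the proposition sharpens the lower bound of the Linear Base Locus Lemma \ref{base-locus-lemma} to an exact value; the genuinely new content is the upper bound, which is delivered by the pullback-and-general-position argument above.
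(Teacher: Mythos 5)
Your proof is correct, and its skeleton is the same as the paper's: write $F$ as a Weyl translate of $E_i$, use invariance of the pairing to reduce to the case $F=E_i$, then transport base-locus multiplicities through the birational map realizing the Weyl element. The genuine difference lies in how the two ingredients are handled. Where the paper simply cites \cite[Lemma 4.3]{CDD} for the case $F=E_i$, you prove it from scratch: the lower bound $\mu\ge -m_i'$ by intersecting with a general line $\ell\subset E_i$, and the upper bound by observing that $R_0=D'+m_i'E_i$ has zero coefficient on $E_i$, hence is pulled back from the blow-up at the remaining points, where generality of $p_i$ keeps it off the base locus. (One point to make explicit: the upper-bound step needs $|\bar R|\neq\emptyset$, which follows from your lower bound applied to any member of $|D'|$ — so the order of your two steps matters.) Where the paper disposes of the transport step in one sentence (``applying the birational map corresponding to $w^{-1}$''), you rightly flag that for $n\ge 3$ Cremona transformations are not regular and justify the transport via isomorphism in codimension one. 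A simplification here: the lift of a standard Cremona transformation to $X^n_{(0)}$ is already an isomorphism in codimension one — it exchanges each $E_i$, for $i$ among the $n+1$ Cremona base points, with the strict transform of the opposite hyperplane, and its indeterminacy sits over the strict transforms of the cycles $L_{I(r)}$, $1\le r\le n-2$, which have codimension at least two — so passing to the full arrangement blow-up of Section \ref{section4.1} is legitimate but not necessary. In sum, your argument makes self-contained the key lemma that the paper outsources, and makes explicit the codimension-one bookkeeping that the paper leaves implicit; your closing consistency check against Lemma \ref{base-locus-lemma} (the exact value $k_{I(n-1)}$ for hyperplanes) is also correct.
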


\begin{proof}
Let $F=w^{-1}(E_{i})$ for $w\in W$. We note that
$\langle E_{i}, w(D)\rangle=\langle w(F),w(D)\rangle=\langle F, D \rangle<0$, hence
\cite[Lemma 4.3]{CDD} implies that any exceptional divisor $E_{i}$ is contained in the base locus of the divisor 
$w(D)$ with multiplicity equal to $-\langle E_{i} ,w(D)\rangle$. 
Applying the birational map of $\PP^{n}$ corresponding to the element
 $w^{-1}$ we obtain that $F$ is contained in the base locus of $D$ with the 
same multiplicity, that equals  $-\langle F, D\rangle$.
\end{proof}
In particular, the above proposition shows that, for $n=2$,  the multiplicity of containment 
of any $(-1)$-curve is given by the intersection product.
 
The following result is an easy consequence of Lemma \ref{base-locus-lemma} and concerns the case 
where the multiplicities of the first points equal the degree of the linear system.

\begin{proposition}
\label{bscone}
Let $\LL:=\LL_{n,d}(d^{t},m_{1},...,m_{s})$ be a non-empty linear system. 
Given $r \leq t-1$, let $L_{I(r)}$ be the $r$-cycle spanned by $r+1$ 
among the first $t$ points.
Then $\LL$ contains in its base locus the cycle $L_{I(r)}$ with multiplicity $d$.
\end{proposition}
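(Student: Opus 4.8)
The plan is to establish the asserted value by proving the two inequalities separately: that the multiplicity of containment of $L_{I(r)}$ in the base locus is at least $d$, and that it is at most $d$.

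For the lower bound I would apply Lemma \ref{base-locus-lemma} directly to the multi-index $I(r)$ that picks out the chosen $r+1$ points. Since each of the first $t$ points carries multiplicity exactly $d$, one has $\sum_{i\in I(r)}m_i=(r+1)d$, whence
$$k_{I(r)}=\max\big((r+1)d-rd,\,0\big)=d>0.$$
The hypothesis $r\le t-1$ guarantees that $r+1$ such points are available, and (implicitly) $r\le n-1$ ensures that $L_{I(r)}$ is a proper linear cycle, so the Linear Base Locus Lemma applies and yields that $L_{I(r)}$ sits in the base locus of $\LL$ with multiplicity at least $d$. This is exactly the sense in which the result is ``an easy consequence of Lemma \ref{base-locus-lemma}''.

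For the upper bound I would argue that no divisor of degree $d$ can vanish along a positive-dimensional linear subspace to order exceeding $d$. Concretely, take any $D\in\LL$ with defining equation $f$ of degree $d$, pick a general point $q\in L_{I(r)}$ and a general line $\ell$ through $q$ meeting $L_{I(r)}$ transversally. The restriction $f|_\ell$ is a form of degree $d$ on $\ell\cong\PP^1$, not identically zero for general $\ell$, and its order of vanishing at $q$ equals the multiplicity of $D$ along $L_{I(r)}$; since that order is bounded by $\deg(f|_\ell)=d$, the multiplicity is at most $d$. Combining the two bounds gives multiplicity exactly $d$ for every member of $\LL$, which is the claim.

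The main delicacy is the upper-bound step. One must check that the multiplicity of containment along $L_{I(r)}$ — the largest $k$ for which $f$ lies in the $k$-th power of the ideal of $L_{I(r)}$ — is correctly read off from the vanishing order along a general transversal line, and that $f|_\ell\not\equiv 0$ for general $\ell$ (which fails only if $D$ contains every such line, forcing $f\equiv 0$ and contradicting $D\in\LL$ non-empty). The lower bound is immediate from Lemma \ref{base-locus-lemma}, so the real content of the proposition lies in the sharpness asserted by this upper bound.
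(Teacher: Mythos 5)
Your proof is correct and follows essentially the same route as the paper's: the lower bound via Lemma \ref{base-locus-lemma} applied to $I(r)$ (giving $k_{I(r)}=d$), and the upper bound from the fact that a nonzero degree-$d$ form cannot vanish along $L_{I(r)}$ to order exceeding $d$. The paper simply dismisses the upper bound as obvious ("the multiplicity can not be higher than the degree, being $\LL$ not empty"), whereas you spell it out with the general transversal line argument; this is a fuller justification of the same step, not a different approach.
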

\begin{proof}
Since the first $r+1$ points have multiplicity $d$ we obtain 
$k_{I(r)}= d$. Hence by Lemma \ref{base-locus-lemma} the cycle 
$L_{I(r)}$ is contained in the base locus with multiplicity at least $d$.
Obviously the multiplicity can not be higher than the degree, being $\LL$  not empty.
\end{proof}

Finally we give a result concerning the case of linear systems with at most $n+2$ points.
\begin{proposition}\label{sharpBLL}
Assume that $s\le n+2$ and let $\LL_{n,d}(m_{1},...,m_{s})$ be a non-empty linear system.
If $0\le r\le \min(n,s)-1$,  
then $\LL_{n,d}(m_{1},...,m_{s})$ contains in its base locus the cycle $L_{I(r)}$ 
with multiplicity $k_{I(r)}$, for any $I(r)\subseteq\{1,\dots,s\}$.
\end{proposition}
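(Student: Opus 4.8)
The plan is to reduce everything to an upper bound for the multiplicity of containment, since the lower bound is already granted. Indeed, by Lemma~\ref{base-locus-lemma}, when $k_{I(r)}>0$ the cycle $L_{I(r)}$ appears in the base locus of $\LL$ with multiplicity at least $k_{I(r)}$; hence it suffices to produce a single divisor $D\in\LL$ whose multiplicity along $L_{I(r)}$ is exactly $k_{I(r)}$ (and, when $k_{I(r)}=0$, a $D\in\LL$ not containing $L_{I(r)}$ at all). After relabelling I assume $I(r)=\{1,\dots,r+1\}$, so $L_{I(r)}=\langle p_1,\dots,p_{r+1}\rangle$, and I keep in mind the Effectivity Lemma~\ref{effectivity} constraints $m_i\le d$ and $\sum_i m_i\le nd$, which hold because $\LL$ is non-empty and $s\le n+2$.

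First I would take $D$ to be a union, counted with multiplicity, of exactly $d$ hyperplanes of $\PP^n$, each spanned by a subset of the general points $p_1,\dots,p_s$. For such a $D$ the multiplicities are transparent: as the $p_i$ are general and the chosen hyperplanes are distinct, the multiplicity of $D$ at $p_i$ equals the number of chosen hyperplanes through $p_i$, and its multiplicity along $L_{I(r)}$ equals the number of chosen hyperplanes containing $L_{I(r)}$---and a hyperplane contains $L_{I(r)}$ exactly when it passes through all of $p_1,\dots,p_{r+1}$. This turns the statement into the combinatorial problem of choosing a multiset of $d$ hyperplanes so that each $p_i$ lies on exactly $m_i$ of them while exactly $k_{I(r)}$ of them contain $L_{I(r)}$. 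For the top case $r=n-1$ one can instead argue directly through Proposition~\ref{-1divisors}: the class of $L_{I(n-1)}$ is $H-\sum_{i\in I(n-1)}E_i$, a standard Cremona transformation based at $n+1$ of the points carries it to an exceptional divisor, and a short computation gives $\langle D,\,H-\sum_{i\in I(n-1)}E_i\rangle=(n-1)d-\sum_{i\in I(n-1)}m_i=-k_{I(n-1)}$, whence the multiplicity of containment is $k_{I(n-1)}$.

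To solve the combinatorial problem I would phrase it as a transportation (flow) feasibility question, with the $d$ hyperplanes as capacitated suppliers and the points as demands $m_i$. The numerics align perfectly with the Effectivity constraints: a hyperplane through general points meets at most $n$ of them, so the total supply is $nd\ge\sum_i m_i$; a point sits on each hyperplane at most once, matching $m_i\le d$; and since every $m_j\le d$ one has $k_{I(r)}=\sum_{j\le r+1}m_j-rd\le m_i$ for each $i\le r+1$, so the $k_{I(r)}$ hyperplanes forced through $L_{I(r)}$ never over-saturate a point. In the case $k_{I(r)}=0$ every hyperplane must omit at least one of $p_1,\dots,p_{r+1}$, dropping its capacity on those points to $r$, and the inequality $\sum_{j\le r+1}m_j\le rd$ is precisely what makes the reduced flow feasible. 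Generality of the $p_i$ guarantees that any incidence pattern using at most $n$ points per hyperplane is geometrically realizable, so a feasible flow yields an honest divisor $D\in\LL$.

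The hard part, and the main obstacle, is establishing the feasibility of this flow uniformly in $r$ and $s$, especially at the boundary $s=n+2$, where the points satisfy the extra relation underlying the Cremona action and the capacity bound $\sum_i m_i\le nd$ is tight. I would attack this either by a Hall-type / max-flow--min-cut verification showing that the Effectivity constraints exclude every blocking cut, or---more in keeping with the methods of this section---by first using a Cremona element of the Weyl group $W$ to replace $\LL$ by a minimal representative of its orbit, for which the required incidences are explicit, and then transporting the constructed divisor back. Verifying that such a Cremona reduction preserves both the class of $L_{I(r)}$ and its precise multiplicity of containment is the delicate step that would require the most care.
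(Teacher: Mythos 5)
Your reduction is sound: by Lemma~\ref{base-locus-lemma} the lower bound on the multiplicity is free, so it suffices to exhibit, for each fixed $I(r)$, one divisor $D\in\LL$ whose multiplicity along $L_{I(r)}$ is exactly $k_{I(r)}$; and your treatment of the top case $r=n-1$ via Proposition~\ref{-1divisors} is essentially what the paper does. But the heart of your argument --- the existence of a union of $d$ hyperplanes, each through a prescribed subset of the points, realizing the incidence pattern ``$p_i$ on at least $m_i$ of them, exactly $k_{I(r)}$ of them containing all of $I(r)$'' --- is precisely the step you do not prove, and it is not a routine verification. What you check is only aggregate consistency (total supply $nd\ge\sum m_i$, per-point capacity $m_i\le d$, and $k_{I(r)}\le m_i$), which is far from sufficient for an integer flow with the \emph{exactness} constraint on the hyperplanes containing $L_{I(r)}$: for instance, when $k_{I(r)}=K_{I(r)}\ge 0$ the count forces every one of the $d-k_{I(r)}$ remaining hyperplanes to contain \emph{exactly} $r$ of the $r+1$ points of $I(r)$, and in the tight boundary case $s=n+2$, $\sum m_i=nd$ every hyperplane must contain exactly $n$ points, so feasibility becomes a (multigraph) degree-sequence realization problem with a forbidden/forced edge pattern --- true, I believe, but a theorem that has to be proved, with separate treatment of the non-tight cases and of $s\le n+1$ (where hyperplanes through fewer than $n$ points are needed). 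Your fallback via Cremona reduction has its own obstruction: for $r\le n-2$ the cycle $L_{I(r)}$ has codimension at least $2$ and lies in the indeterminacy locus of the standard Cremona map, so the Weyl-group formalism of Proposition~\ref{-1divisors} (which acts on divisor classes) does not track multiplicity of containment along $L_{I(r)}$, exactly the ``delicate step'' you flag and leave open. So as it stands the proposal proves only the lower bound and the case $r=n-1$.

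For comparison, the paper avoids any explicit construction. It sets $K_{I(l)}=\sum_{i\in I(l)}m_i-ld$, lets $R$ be maximal with $K_{I(R)}\ge0$ over supersets of $I(r)$, and argues: (a) sub-cycles are handled by a B\'ezout-type cone argument --- if $L_{I(r)}$ were contained with multiplicity $\ge 1+K_{I(r)}$, joining it to the complementary cycle $L_{I(R)\setminus I(r)}$ would force $L_{I(R)}$ into the base locus more than $K_{I(R)}$ times; (b) the maximal cycle itself is treated by backward induction on $R$, either by adding an auxiliary point of multiplicity $d-K_{I(R)}$ (when $s\le n$) or by repeatedly splitting off hyperplanes through $n$ points (Castelnuovo restriction, when $s=n+1,n+2$) until one reaches a system with $K=0$, using the inclusion of base loci for the subsystem; (c) the case $K_{I(r)}<0$ is again a restriction argument. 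If you want to salvage your route, the missing combinatorial feasibility statement must be proved in full (e.g., by a genuine Gale--Ryser/Hall analysis of the constrained flow), and that is likely to be at least as long as the paper's inductive proof.
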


\begin{proof} 
For every multi-index $I(l)\subseteq\{1,\dots,s\}$ we write
$K_{I(l)}:=\sum_{i\in I(l)}m_i-ld$, so that
 $k_{I(l)}=\max(K_{I(l)},0)$.

Fixed a multi-index $I(r)\subseteq\{1,\dots,s\}$, 
we consider separately the following cases:
\begin{itemize}
\item[(1)] $k_{I(r)}=K_{I(r)}\ge 0$
\item[(2)] $k_{I(r)}\neq K_{I(r)}<0$.
\end{itemize}

\smallskip

Case (1). 
We denote by 
$$R:=\max\{l|K_{I(l)}\geq 0, I(r)\subseteq I(l)\},$$

If $r=n-1(=R)$ then our claim is true by Proposition \ref{-1divisors}.
Next, we consider separately the following cases:
\begin{itemize}
\item[(i)] $r< R$
\item[(ii)] $r=R$.
\end{itemize}

Case (i). {\it 
We assume that for any $I(R)$ such that $K_{I(R)}\ge0$
the cycle $L_{I(R)}$ is contained in $\LL$  with multiplicity $K_{I(R)}$, and
we prove that any cycle
$L_{I(r)}$ is contained in $\Bs(\LL)$ with multiplicity $K_{I(r)}$.}

Notice that $0\le K_{I(R)}\leq K_{I(r)}$, since $m_i\le d$.
Therefore all linear subcycles $L_{I(l)}$ of $L_{I(R)}$
are contained in $\Bs(\LL)$ with multiplicity at least 
$K_{I(l)}\geq 0$,  by Lemma \ref{base-locus-lemma}; in particular
$L_{I(r)}$ is contained at least $K_{I(r)}$ times.
Assume now by contradiction that $L_{I(r)}$ is contained in $\Bs(\LL)$ with 
multiplicity at least $1+K_{I(r)}$. 
We know that the linear cycle $L_{J(R-r-1)}$ is contained in the base locus 
with multiplicity at least 
$K_{J(R-r-1)}\ge 0$, where $J(R-r-1):=I(R)\setminus I(r)$. 
For any point $p$ in the cycle $L_{I(r)}$ and $p'$ in the cycle $L_{J(R-r-1)}$ 
we get that the line spanned by $p$ and $p'$ is contained in the base locus 
with multiplicity at least $1+K_{I(r)}+K_{J(R-r-1)}-d=1+K_{I(R)}$ 
and this is a contradiction.
Hence $L_{I(r)}$ is contained in $\Bs(\LL)$ with multiplicity $K_{I(r)}$.

Case (ii). {\it 
 We prove that
for any $I(R)$ such that $K_{I(R)}\ge0$
the cycle $L_{I(R)}$ is contained in $\LL$  with multiplicity $K_{I(R)}$.}

We prove this claim for any non-empty linear system in $\PP^n$
by using backward induction on $R$.
Given $R\le n-2$, assume now that for every non-empty 
linear system $\LL$ in $\PP^n$ such that
$$\max\{l|K_{I(l)}\geq 0, I(r)\subseteq I(l)\}=R+1,$$
for any multi-index $I(R+1)$ such that $K_{I(R+1)}\ge0$,
we know that the cycle $L_{I(R+1)}$ is contained in the base locus with multiplicity $K_{I(R+1)}$.
We prove the statement for a non-empty linear system with 
$$\max\{l|K_{I(l)}\geq 0, I(r)\subseteq I(l)\}=R.$$
Let $I(R)=\{i_1,\ldots, i_{R+1}\}$ 
such that $K_{I(R)}\ge 0$.

We first consider the case $s\le n$.
By the effectivity of $\LL$ we have $nd\geq m_{1}+\dots+m_{s}$. 
If $m_i=d$ for all $i\in I(r)$ we conclude by Proposition \ref{bscone}, hence we can assume that 
$m_i<d$ for some $i\in I(r)$. 
This implies that $0< d-K_{I(R)}\leq d$. 
We consider now the subsystem $\LL'$ of $\LL$ 
obtained by adding another general point $p_{s+1}$ of multiplicity $d-K_{I(R)}$.
This is a linear system based at, at least, $R+2$ points,  
and which is clearly effective, by Lemma \ref{effectivity}.
Now, for  the linear system $\LL'$, we have $K_{I(R+1)}=0$ by construction, for all $I(R+1)\ni s+1$,
and therefore by induction $L_{I(R+1)}$ is contained in $\Bs(\LL')$ with multiplicity $K_{I(R+1)}$.
Hence by applying case (i) we deduce that
the cycle $L_{I(R)}$ is contained in $\Bs(\LL')$ with multiplicity $K_{I(R)}$.
We finally note that
 $\Bs(\LL')\supseteq\Bs(\LL)$,  hence we deduce that 
$L_{I(R)}$ is also contained in $\Bs(\LL)$ with multiplicity exactly $K_{I(R)}$.

Now we consider the case $n+1\le s\le n+2$. We define
\begin{align*}
q_\LL:&=\min\{(R+1)d-(m_{i_1}+\dots+m_{i_{R+1}}+m_{j})| j\not\in I(R)\}\\
&=\min\{d-m_{j}-K_{I(R)}|j\not\in I(R)\}
=\min\{-K_{I(R+1)}|I(R+1)=I(R)\cup\{j\}\}.
\end{align*}
Let $I(R+1)=\{i_1,\ldots,i_{R+1},i_{R+2}\}$ 
for some $i_{R+2}\not\in I(R)$ such that $-K_{I(R+1)}=q_\LL$. 
Note that $q_\LL>0$, by definition of $R$, 
and recall that $K_{I(R)}\geq 0$, hence 
we have $d>m_{i_{R+2}}$.
Now if $m_i=d$ for all $i\in I(R)$, then we apply Proposition \ref{bscone} 
to conclude that $L_{I(R)}$ is contained in $\Bs(\LL)$ 
with multiplicity $d$. 
Otherwise, after possibly reordering the points, we can assume also 
that $m_{i_{R+1}}<d$. 
Now
we restrict $\LL$ to a hyperplane passing through $n$ 
points, such that $p_{i_1},\dots, p_{i_R}$  are among them and $p_{i_{R+1}}, p_{i_{R+2}}$ are not.
The residual linear system $\Res$ is non-empty by construction, 
in fact it satisfies conditions (\ref{effectivity}), because $\LL$ does. 
It is easy to check that $q_{\Res}=q_\LL-1$. 
We repeat this procedure, after possibly 
reordering the points, 
until either all points on $L_{I(R)}$ 
have multiplicity equal to the degree, or we get a residual, 
that we denote again by $\Res$ abusing notation, for which $q_{\Res}=0$. 
Since, the linear system $\LL$ contains the subsystem given by the hypersurfaces
reducible to the sum of hypersurfaces of $\Res$ and hyperplanes not containing
the cycle $L_{I(R)}$, 
then clearly we have $\Bs(\LL)\subseteq\Bs(\Res)$.
Now note that $q_{\Res}=0$, in particular in $\Res$ there is a cycle
$L_{I(R+1)}$ with $K_{I(R+1)}=0$. Hence we apply step (i) to conclude
that $L_{I(R)}$ is contained in $\Bs(\Res)$ with multiplicity $K_{I(R)}$. Hence it follows
that $L_{I(R)}$ is contained in $\Bs(\LL)$ with multiplicity $K_{I(R)}$. 

\smallskip

Case (2). 
An easy remark is that $L_{I(r)}$ contains at least two points, 
say $p_{i_r}, p_{i_{r+1}}$,
with $d>m_{i_r},m_{i_{r+1}}$, since $d\ge 1$. 
We consider the restriction of $\LL$ 
to a hyperplane passing through $\min(n,s)$ points of $\LL$, 
such that $p_{i_1},\dots,p_{i_{r-1}}\in L_{I(r)}$ are among them and $p_{r},p_{r+1}$ are not.
In this way, as above, the residual system $\Res$ 
satisfies conditions (\ref{effectivity}) hence it is non-empty.
We can repeat this procedure until we get a 
residual for which the corresponding number $K_{I(r)}$ is null. 
Therefore, by the previous case, $L_{I(r)}$ 
is not contained in the $\Bs(\Res)$, so
it is  not contained  in $\Bs(\LL)$.
\end{proof}

\section{Linear expected dimension and linear speciality}
\label{cones-section}

In this section we will introduce the main notion of the paper, 
which is the notion of linear speciality.

We consider first, as an illustrative example, 
the case of cones 
which is  easy and well-understood.
Note that any divisor in the linear system $\LL_{n,d}(d^s)$
is a cone with vertex the linear space spanned by the $s$ points, 
for any $0\le s\leq n$ (this immediately follows 
from Proposition \ref{bscone}).
On the other hand if $s\ge n+1$ the linear system $\LL_{n,d}(d^s)$ is empty.
We get then the following result. 

\begin{proposition}\label{cones-theorem}
Given $0\le s\leq n+1$ and the linear system $\LL=\LL_{n,d}(d^s)$, we have
\begin{equation}\label{h1-cones}
\hh^1(\PP^n,\LL)=\sum_{i=2}^s (-1)^i\binom{n+d-i}{n}\binom{s}{i}.
\end{equation}
\end{proposition}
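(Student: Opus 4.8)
The plan is to read $h^1(\PP^n,\LL)$ off the cohomology of the restriction sequence recalled in the introduction and to reduce the whole statement to a single binomial identity. First I would apply the four-term exact sequence
$$0\to H^0(\PP^n,\LL)\to H^0(\PP^n,\OO_{\PP^n}(d))\to H^0(Z,\OO_Z)\to H^1(\PP^n,\LL)\to 0,$$
where $Z=dp_1+\cdots+dp_s$ is the scheme of $s$ general fat points of multiplicity $d$, to obtain
$$h^1(\PP^n,\LL)=h^0(Z,\OO_Z)-\binom{n+d}{n}+h^0(\PP^n,\LL).$$
Since $Z$ is supported at $s$ distinct points and each fat point $dp_i$ has length $\binom{n+d-1}{n}$, additivity of length gives $h^0(Z,\OO_Z)=s\binom{n+d-1}{n}$, while $h^0(\PP^n,\OO_{\PP^n}(d))=\binom{n+d}{n}$.

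The geometric input is the computation of $h^0(\PP^n,\LL)$. By Proposition \ref{bscone} every divisor of $\LL=\LL_{n,d}(d^s)$ contains the $(s-1)$-cycle spanned by $p_1,\dots,p_s$ with multiplicity $d$, so it is a cone with vertex $\PP^{s-1}=\langle p_1,\dots,p_s\rangle$. Choosing coordinates so that $p_1,\dots,p_s$ are the first $s$ coordinate points, a section of $\LL$ is a polynomial of degree $d$ involving only the variables $x_s,\dots,x_n$; counting monomials in these $n+1-s$ variables yields
$$h^0(\PP^n,\LL)=\binom{n+d-s}{n-s},$$
with the convention that the right-hand side is $0$ for $s=n+1$, consistent with the emptiness of $\LL$ in that range.

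Combining the three computations,
$$h^1(\PP^n,\LL)=s\binom{n+d-1}{n}-\binom{n+d}{n}+\binom{n+d-s}{n-s},$$
so the proposition is equivalent to the identity
$$\binom{n+d-s}{n-s}=\sum_{i=0}^{s}(-1)^i\binom{s}{i}\binom{n+d-i}{n},$$
since isolating the terms $i=0$ and $i=1$ on the right produces exactly $\binom{n+d}{n}-s\binom{n+d-1}{n}$ plus the claimed sum over $i\ge 2$. I would prove this identity by generating functions: using $\binom{n+d-i}{n}=[t^{d-i}](1-t)^{-(n+1)}$,
$$\sum_{i=0}^{s}(-1)^i\binom{s}{i}\binom{n+d-i}{n}=[t^d]\,(1-t)^{-(n+1)}\sum_{i=0}^{s}(-1)^i\binom{s}{i}t^i=[t^d]\,(1-t)^{s-n-1},$$
and the last coefficient is $\binom{n+d-s}{n-s}$.

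The only points needing care, which I regard as the main obstacle, are the precise evaluation of $h^0(\PP^n,\LL)$ — namely checking that imposing multiplicity $d$ at each general point is exactly equivalent to forcing the polynomial to be a cone, with no further constraints — together with the correct handling of the degenerate binomials (the boundary case $s=n+1$ and the vanishing of $\binom{n+d-i}{n}$ for $i>d$). The binomial identity itself is then routine.
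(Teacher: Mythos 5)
Your proposal is correct and follows essentially the same route as the paper: both compute $h^0(\PP^n,\LL)=\binom{n+d-s}{d}$ from the cone structure forced by Proposition \ref{bscone}, plug it into the restriction exact sequence, and reduce the claim to the binomial identity $\binom{n+d-s}{n-s}=\sum_{i=0}^{s}(-1)^i\binom{s}{i}\binom{n+d-i}{n}$. The only difference is cosmetic: the paper asserts this identity ``by double induction on $n$ and $d$'' without details, while you prove it cleanly via the generating function $(1-t)^{-(n+1)}$, which also handles the degenerate cases $i>d$ and $s=n+1$ uniformly.
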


\begin{proof}
Let $Z$ be a collection of $s$ multiplicity-$d$ points in $\PP^n$. We have:
$$\hh^0(\PP^n,\II_Z\otimes \OO(d))= \hh^0(\PP^{n-s},\OO(d))= \binom{n-s+d}{d}$$
if $s\le n$, and
$\hh^0(\PP^n,\II_Z\otimes \OO(d))= 0$ if $s=n+1$.
Hence
$$\hh^1(\PP^n,\II_Z\otimes \OO(d))=\binom{n-s+d}{d}- \binom{n+d}{d}+s\binom{n+d-1}{n}.$$

It is easy to prove that
\begin{equation}\label{formula}
\binom{n-s+d}{d}- \binom{n+d}{d}+s\binom{n+d-1}{n}=
\sum_{i=2}^s(-1)^i\binom{n+d-i}{n}\binom{s}{i}\end{equation}
by double induction on $n\ge 1$ and $d\ge 1$.
This concludes the proof.
\end{proof}

We can interpret formula \eqref{h1-cones} in the following way:
there are $\binom{s}{i}$ 
linear $(i-1)$-cycles contained in the base locus, each of them
having multiplicity of containment $d$ and giving an obstruction equal to
of $(-1)^i\binom{n+d-i}{n}$ to the speciality.

This example suggests that, in general, 
if a linear subspace $\PP^r$ is contained in the 
base locus of any linear system $\LL$ with multiplicity $k$, then its 
contribution to the speciality of $\LL$ is
$$(-1)^{r+1}\binom{n+k-r-1}{n}.$$

Therefore we give the following new definition of virtual (and expected) dimension which takes in account these linear obstructions.

\begin{definition}\label{new-definition}
Given a linear system $\LL=\LL_{n,d}(m_1,\ldots,m_s)$, for any integer 
$-1\le r\le  s-1$ and
for any multi-index $I(r)=\{i_1,\ldots,i_{r+1}\}\subseteq\{1,\ldots,s\}$, 
with the convention $I(-1):=\emptyset$,
we adopt the following  notation, which extends formula (\ref{definition-of-k}):
$$
k_{I(r)}:=\max(m_{i_1}+\ldots+m_{i_{r+1}}-rd,0).
$$

The {\em linear virtual dimension} of $\LL$ is the number
\begin{equation}\label{linvirtdim}
\sum_{r=-1}^{s-1}\sum_{I(r)\subseteq \{1,\ldots,s\}} (-1)^{r+1}\binom{n+k_{I(r)}-r-1}{n} -1.
\end{equation}

The {\em linear  expected dimension} of $\LL$, denoted by $\ldim(\LL)$ is 
defined as follows: if
the linear system $\LL$ is contained in a linear system whose linear virtual dimension
 is negative, then
we set $\ldim(\LL)=-1$, otherwise we define $\ldim(\LL)$ to be
 the maximum between the linear virtual dimension of $\LL$ and $-1$.

We  say that $\LL$ is {\em linearly special} if $\dim(\LL)\neq\ldim(\LL)$.
Otherwise we say that $\LL$ is {\em linearly non-special}. 
\end{definition}

Notice that, according to this definition, we argue in Proposition \ref{cones-theorem}  that
any linear system of the form $\LL_{n,d}(d^s)$ is linearly non-special.

\begin{remark}
Note that if $\LL$ is non-empty, then the terms corresponding to $n\le r \le s-1$
in the sum of formula \eqref{linvirtdim} vanish. In the next section we will prove that,
in this case,
$$
\ldim(\ls)=\chi(\tilde{\LL})-1,
$$
where 
$\chi(\tilde{\LL})$ is the Euler  characteristic of 
the sheaf associated to the strict transform of $\LL$
 after blowing up the linear base locus. 
\end{remark}

\begin{remark}\label{old5.1}
It is easy to prove that
$\vdim(\LL)\le \ldim(\LL)$ for any linear system $\LL$.
On the other hand, we expect that $\ldim(\LL)\le \dim(\LL)$. 
This inequality is not obvious and it  
is indeed equivalent to the {\it weak Fr\"oberg-Iarrobino conjecture}, see \cite[Conjecture 4.5]{Chandler}.
Note also that our definition of linear virtual dimension is equivalent to 
\cite[Definition 6]{Chandler}.
\end{remark}

Also the family of homogeneous linear systems with only triple points 
 $\LL_{n,d}(3^s)$, for arbitrary  $s$, is well suited to test our approach. 
By means of computer-aided computations \cite{macaulay}, 
we calculated the dimensions of these linear systems for $n=3,4,5$ and low degree, getting the following (partial)
classification:

In $\PP^3$, if $d\le38$, the only special linear systems are:
\begin{itemize}
\item $\LL_{3,3}(3^2), \LL_{3,3}(3^3)$ \quad  linearly non-special,
\item $\LL_{3,4}(3^2), \LL_{3,4}(3^3), \LL_{3,4}(3^4)$ \quad linearly non-special,
\item $\LL_{3,6}(3^9)$ \quad  linearly special.
\end{itemize}

In $\PP^4$, if $d\le 10$, the only special linear systems are:
\begin{itemize}
\item $\LL_{4,3}(3^2), \LL_{4,3}(3^3), \LL_{4,3}(3^4)$ \quad linearly non-special,
\item $\LL_{4,4}(3^2), \LL_{4,4}(3^3), \LL_{4,4}(3^4), \LL_{4,4}(3^5)$  linearly non-special,
\item $\LL_{4,6}(3^{14})$ \quad  linearly special.
\end{itemize}

In $\PP^5$, if $d\le 7$, the only special linear systems are:
\begin{itemize}
\item $\LL_{5,3}(3^2),\ldots, \LL_{5,3}(3^5)$ \quad linearly non-special,
\item $\LL_{5,4}(3^2), \ldots, \LL_{5,4}(3^6)$ \quad linearly non-special.
\end{itemize}

In other words our experiments show that 
there are no linearly special linear systems with triple points in $\PP^3$ with degree
$\le38$ except for the case $\LL_{3,6}(3^9)$,
there are no linearly special linear systems with triple points in $\PP^4$ with degree
$\le10$ except for $\LL_{4,6}(3^{14})$,
and there are no linearly special linear systems with triple points in $\PP^5$ with degree
$\le7.$

The first line in the three groups of examples corresponds precisely to the case of cones, see Proposition \ref{cones-theorem}.

Notice that the two exceptional cases $\LL_{3,6}(3^9)$ and $\LL_{4,6}(3^{14})$ can be explained by the 
fact that there is a quadric in the base locus which gives speciality. 
They were predicted both the by Fr\"oberg-Iarrobino conjecture and by the Laface-Ugaglia conjecture, see Section \ref{final-section} for
a more detailed explanation about these conjectures.

Besides this two special cases, all other cases have $s\le n+2$. 
In  Corollary \ref{corollary-monster},
 we will give a precise description and an explicit computation of their speciality.

\section{Linear systems with at most $n+2$ points}\label{up to $n+2$}
\label{toric-case-section}

\subsection{Blowing up: construction and notation}
\label{section4.1}

Let $p_1,\dots,p_s$ be general points in $\PP^n$. 
For every integer $0\leq r\le \min(n,s)-1$ we denote by $I(r)\subseteq \{1,\dots,s\}$ 
a multi-index of length $|I|=r+1$,
and by $L_{I(r)}$ the unique $r$-cycle through the points 
$\{p_i,\ i\in I(r)\}$: $L_{I(r)}\cong\PP^r\subseteq\PP^n$.
Notice that $L_{I(0)}=p_i$ is a point.

Let $\mathcal{I}$ be a set of subsets of $\{1,\dots,s\}$ such that
\begin{enumerate}
\item $\{i\}\in\mathcal{I}$, for all $i\in\{1,\dots,s\}$;
\item if $I\subset J$ and $J\in\mathcal{I}$, then $I\in\mathcal{I}$.
\end{enumerate}
Let $\Lambda=\Lambda(\mathcal{I})\subset\PP^n$ be
the subspace arrangement corresponding to $\mathcal{I}$, i.e.  the (finite) union of the
linear cycles $L_I$ for $I\in\mathcal{I}$.
Let $\bar{r}$ be the dimension of the biggest linear cycle in $\Lambda$, 
i.e.\ $\bar{r}=\max_{I\in\mathcal{I}}(|I|)-1$. 
Write $\Lambda=\Lambda_{(1)}+\cdots+\Lambda_{(\bar{r})}$, 
where $\Lambda_{(r)}=\cup_{I(r)\in\mathcal{I}} L_{I(r)}$.

Assume moreover that $\mathcal{I}$ satisfies the following condition 
\begin{itemize}\label{disjoint complementary cycles}
\item[(3)] if $I,J\in\mathcal{I}$, then $L_{I}\cap L_J=L_{I\cap J}$.
\end{itemize}
Notice that this condition is obviously satisfied when $s\le n+1$.

As in Section \ref{base-locus-section}, we denote by 
$\pi_{(0)}^n:X_{(0)}^n\to\PP^n$
the blow-up of $\PP^n$
at $p_1,\dots,p_s$, with $E_1,\dots,E_s$ exceptional divisors. 
Let us also consider the following sequence of blow-ups:
\[
X_{(\bar{r})}^n\stackrel{\pi_{(\bar{r})}^n}{\longrightarrow} 
\cdots \stackrel{\pi_{(3)}^n}{\longrightarrow}
X_{(2)}^n\stackrel{\pi_{(2)}^n}{\longrightarrow}X_{(1)}^n
\stackrel{\pi_{(1)}^n}{\longrightarrow}X_{(0)}^n,
\]
where $X_{(r)}^n\stackrel{\pi_{(r)}^n}{\longrightarrow}X_{(r-1)}^n$
denotes the blow-up of $X_{(r-1)}^n$ along 
the strict transform of $\Lambda_{(r)}\subset\PP^n$,
via $\pi_{(r-1)}^n\circ\cdots\circ\pi_{(0)}^n$.  
Let us denote by
$E_{I(r)}$ the exceptional divisors of the cycles $L_{I(r)}$, for any $I(r)\in\mathcal{I}$.
We will denote, abusing notation, by $H$ the pull-back in $X_{(r)}^n$
of $\mathcal{O}_{\PP^n}(1)$ and by
${E}_{I(\rho)}$, for $0\le \rho \le r-1$, the pull-backs in $X_{(r)}^n$
of the exceptional divisors of $X^n_{(\rho)}$, respectively.

\begin{remark}\label{blow up of hypersurface}
Notice that, in the case $\bar{r}=n-1$,  the map
$X_{(n-1)}^n\stackrel{\pi_{(n-1)}^n}{\longrightarrow} X_{(n-2)}^n$ is an isomorphism and 
in particular $\Pic(X_{(n-1)}^n)\cong (\pi^n_{(n-1)})^\ast \Pic(X_{(n-2)}^n)$. 
Thus, in our notation, for every $I(n-1)\subseteq \{1,\dots,s\}$ we have
 $$E_{I(n-1)}=H-\sum_{\substack{I(\rho)\subseteq I(n-1),\\0\le \rho\le n-2}}E_{I(\rho)}.$$
\end{remark}

\subsubsection*{Intersection theory on the blow-up $X_{(r)}^n$}

The Picard group of $X^n_{(r)}$ is 
$$\textrm{Pic}(X^n_{(r)})=\langle H, E_{I(\rho)}: 0\le \rho \le r-1  \rangle.$$

\begin{remark}\label{blow up preserves h^i}
For $r=1,\dots, n-1$, if $F$ is any divisor on $X^n_{(r-1)}$, then for any $i\geq 0$, we have
\[
h^i(X^n_{(r)}, (\pi^n_{(r)})^\ast F)=h^i(X^n_{(r-1)},F).
\]
It follows from Zariski connectedness Theorem and by the projection formula (see for instance \cite{hartshorne}
or \cite[Lemma 1.3]{laface-ugaglia-BullBelg} for a more detailed proof.).
\end{remark}

Let $\pi=\pi_{(r)}^n\circ\cdots\circ\pi_{(0)}^n$. 
Given $0\leq \rho\le \min(n,s)-1$ and any multi-index $I(\rho)=\{i_1,\ldots,i_{\rho+1}\}$, 
we denote by $H_{I(\rho)}$ the strict transform via $\pi$ 
of a hyperplane $\mathcal{H}$ of $\PP^n$ 
containing the points $p_{i_1},\ldots,p_{i_{\rho+1}}$.

Note that the total transform  of $\mathcal{H}$ is 
$$\pi^*(\mathcal{H})=H_{I(\rho)}+\sum_{\substack{J\subseteq I(\rho),\\ |\rho|\le r+1}} E_{J},$$ 
since the hyperplane $\mathcal{H}$ contains the cycle $L_J$, for any $J\subseteq I(\rho)$, and the cycle has been blown up if its length is at most $r+1$.

Notice that $H_{I(\rho)}$ is the blow-up of the hyperplane $\mathcal{H}$ at the points 
$p_{i_1},\ldots,p_{i_{\rho+1}}$ and at all the cycles $L_J$ for any $J\subseteq I(\rho)$ of length at most $r+1$. 
Denoting by $h$ the pull-back of $\mathcal{O}_{\mathcal{H}}(1)$ and 
by $e_J$ the corresponding exceptional divisors, we have
$$\textrm{Pic}(H_{I(\rho)})=\langle h, e_J: J\subset I(\rho), |J|\le \min(n-2,r+1) \rangle.$$
Then we have:
$$h={{H}}_{|H_{I(\rho)}},$$
and, for any multi-index $J$,
\begin{align*}
{{E_J}}_{|H_{I(\rho)}}&=0, &\textrm{ if } J\cap {I(\rho)}= \emptyset, |J|\le n-2 ,\\
{{E_J}}_{|H_{I(\rho)}}&=e_{J\cap {I(\rho)}}, &\textrm{ if } J\cap {I(\rho)}\neq \emptyset, |J|\le n-2.
\end{align*}
If $r=n-2$, we also have some exceptional divisor $E_J$ with $|J|=n-1$, 
and in this case, if $J\subseteq I(\rho)$ we have
$${{E_J}}_{|H_{I(\rho)}}= h-\sum_{K\subsetneq J}e_K.$$

\subsubsection*{The geometry of the exceptional divisors}

Fix $0\leq r\le \min(n,s)-1$ and 
consider an exceptional divisor $E_{I(r)}$ in $X^n_{(r)}$, 
for $I(r)=\{i_1,\dots,i_{r+1}\}\in\mathcal{I}$.
Notice that 
$$E_{I(r)}\cong X^r_{(r-1)}\times \PP^{n-r-1} \subset X^n_{(r)},$$
where $X^r_{(r-1)}$ denotes the blow-up of $L_{I(r)}\cong\PP^r$ along all
linear $\rho$-cycles, $0\le \rho\le r-1$, spanned by the points $p_{i_1},\dots,p_{i_{r+1}}$.

Let us denote 
$$\textrm{Pic}(X^r_{(r-1)})=\langle h, e_{I(\rho)}: I(\rho)\subset I(r), \rho\le r-2 \rangle.$$

Recall that the canonical sheaf of $X^r_{(r-1)}$ is
\begin{equation}\label{canonical}
\OO_{X^r_{(r-1)}}(-(r+1)h+(r-1)\sum e_i+(r-2)\sum e_{I(1)}+
\ldots +\sum e_{I(r-2)})
\end{equation}

For any multi-index $I(r)$ we will denote by $x_{I(r)}$ the following divisor on $X^r_{(r-1)}$
\begin{equation}\label{def-x}
x_{I(r)}= rh -\left((r-1)\sum e_i+(r-2)\sum e_{I(1)}+\ldots+\sum e_{I(r-2)}\right)
\end{equation}

Note that $x_{I(r)}$ is the Cremona transform of the hyperplane of $L_{I(r)}$. 

\begin{lemma}\label{lemma-EdotE} 
For any exceptional divisor $E_{I(r)}$ in $X^n_{(r)}$, we have
\begin{equation}\label{EdotE}
{E_{I(r)}}_{|E_{I(r)}} \cong \OO_{X^r_{(r-1)}\times \PP^{n-r-1}}(-x_{I(r)}, -1),
\end{equation}
where $x_{I(r)}$ is defined in \eqref{def-x}.
\end{lemma}

\begin{proof}
Let  $I=I(r)\in\mathcal{I}$ be the set of indices parametrizing the $r+1$ fundamental points of the linear cycle $L_{I(r)}$
in $X^n_{(0)}$ whose corresponding exceptional divisor in $X^n_{(r)}$ is $E_{I(r)}$.
For  $1\le \rho\le r-1$, set $\varepsilon_{(\rho)}:=\sum_{I(\rho)\subset I} e_{I(\rho)}$
 to be the exceptional divisor
 of  all linear cycles of dimension $\rho$ 
of $\PP^n$ that are contained in $L_{I(r)}$,
namely of ${\Lambda_{(\rho)}}_{|L_{I(r)}}\subset L_{I(r)}\cong\PP^r$.
 Notice that, being $\pi^r_{(r-1)}:X^r_{(r-1)}\to X^r_{(r-2)}$ an isomorphism, then $\varepsilon_{(r-1)}$ is the 
 pull-back of the sum of 
fundamental hyperplanes $H_{I(r-1)}$ of $X^r_{(0)}$: 
\begin{align*}
\varepsilon_{(r-1)}&=\sum_{I(r-1)\subset I} (h-\sum_{i\in I(r-1)} e_i-\sum_{I(1)\subset I(r-1)}e_{I(1)}-\cdots-
\sum_{I(r-2)\subset I(r-1)}e_{I(r-2)})\\
\ &= (r+1)h-r \sum_{i\in I}e_i-(r-1)\sum_{I(1)\subset I}e_{I(1)}-\cdots- 2\sum_{I(r-2)\subset I}e_{I(r-2)}.
\end{align*}
Set now
 $\psi^\ast: = (\pi^r_{(r-1)})^\ast \cdot (\pi^r_{(r-2)})^\ast \cdots\cdot (\pi^r_{(1)})^\ast $. 
By using \cite[B.6.10]{fulton}, we compute the normal bundle $N_{X^r_{(r-1)}| X^n_{(r)}}$ of 
$X^r_{(r-1)}$ in $X^n_{(r)}$ and we get:
\begin{align*}
{N_{X^r_{(r-1))}|X^n_{(r-1)}} }&=
\psi^\ast (N_{X^r_{(0)}|X^n_{(0)}})\otimes-\left(\sum_{\rho=1}^{r-2} \varepsilon_{(\rho)}+\varepsilon_{(r-1)}\right)\\
&= \OO_{X^r_{(r-1)}}\left(\left(h-\sum_{i\in I}e_i\right) -\sum_{\rho=1}^{r-2} \varepsilon_{(\rho)}-\varepsilon_{(r-1)})\right)^{\oplus n-r}\\
& ={\OO_{X^r_{(r-1)}}(-x_{I(r)})}^{\oplus n-r}
\end{align*}
hence we deduce \eqref{EdotE}.
\end{proof}

\begin{remark}\label{remark-intersection}
Given a multi-index $I$ and $F$ any divisor in $X^r_{(r-1)}$,
if $F$ contains $k$ times the cycle $L_I$ in its base locus, then we have
$${F}_{|E_I}\cong \OO_{X^r_{(r-1)}\times \PP^{n-r-1}}(-k x_{I(r)},0)$$ 
where $x_{I(r)}$ is as in \eqref{def-x}.
\end{remark}

\subsection{The main  theorem}
In this section we give our main result, Theorem \ref{monster-theorem}, 
concerning the cohomology of all the strict transforms of a linear system. 
Note that
as a consequence of this technical result 
we obtain that a non-empty linear system $\LL_{n,d}(m_1,\dots,m_s)$ 
with $s\le n+2$ is always linearly non-special (see Corollary \ref{corollary-monster}).

Let $\LL=\LL_{n,d}(m_1,\dots,m_s)$  be a non-empty linear system on $\PP^n$. 
Elements $D$ of $\LL$ are in bijection with  divisors on $X_{(0)}^n$ of the 
following form:
\[
 D_{(0)}:= dH-\sum_{i=1}^s m_i E_i.
\]
In the blow-up $X^n_{(r)}$ of $X^n_{(r-1)}$  along 
the union of the strict transforms of all $r$-cycles $L_{I(r)}$,
the total transform of $D_{(r-1)}\subset X^n_{(r-1)}$ is 
\begin{equation}\label{complete transform}
(\pi^n_{(r)})^\ast D_{(r-1)}=dH-\sum_{\substack{I(\rho), \\ 0\le \rho\le r-1}} k_{I(\rho)} E_{I(\rho)},
\end{equation}
while the strict transform of $D_{(r-1)}$ is
\begin{equation}\label{strict transform}
D_{(r)}= dH-\sum_{\substack{I(\rho),\\ 0\le \rho\le r-1}} k_{I(\rho)} E_{I(\rho)}-\sum_{I(r)} k_{I(r)} E_{I(r)}=
dH-  \sum_{\substack{I(\rho), \\ 0\le \rho\le r}} k_{I(\rho)} E_{I(\rho)},
\end{equation}

For the sake of simplicity throughout this paper we will abbreviate by $H^i(X^n_{(r)},D_{(r)})$, or by $H^i(D_{(r)})$, the
cohomology group $H^i(X^n_{(r)},\mathcal{O}_{X^n_{(r)}}(D_{(r)}))$.

\begin{remark}\label{hyperplanes in base locus}
If the linear base locus of $\ls$ has maximal dimension $n-1$, then
there exists an effective divisor $\Delta$ in $X^n_{(0)}$ such that
\[
H^i(X^n_{(n-1)},D_{(n-1)})\cong H^i(X^n_{(n-2)},\Delta_{(n-2)}),
\]
where $\Delta_{(n-2)}$ is the strict transform of $\Delta$ in $X^{n}_{(n-2)}$.
\end{remark}
\begin{proof} 
Recall that a hyperplane $L_{I(n-1)}$ through $n$ points of multiplicity respectively 
$m_{i_1},\dots, m_{i_n}$
is contained in the base locus of $D=dH-\sum_{i=1}^s m_i E_i$ if and only if 
$k_{I(n-1)}=\sum_{j=1}^n m_{i_j}-(n-1)d\ge 1$,
 by Proposition \ref{-1divisors}.
All hyperplanes for which $k_{I(n-1)}\ge1$ split off $D$ 
$k_{I(n-1)}$-many times, and the residual part is a divisor 
$\Delta=\delta H-\sum_{i=1}^{s}\mu_i E_i$
 on $X^n_{(0)}$, with $\delta=d-\sum_{I(n-1)}k_{I(n-1)} \ge 0$ and 
 $\mu_i=m_i-\sum_{I(n-1)\ni i} 
k_{I(n-1)}\ge 0$, for  $i=1,\dots, s$.

Clearly $\Delta$ is effective 
and its linear base locus has maximal dimension $\le n-2$.
Denote by $\Delta_{(n-2)}$ the strict transform of $\Delta$ in $X^n_{(n-2)}$. 
The conclusion easily  follows as the strict transform $D_{(n-1)}$ equals
the total transform in $X^n_{(n-1)}$ of $\Delta_{(n-2)}$ in $X^n_{(n-2)}$.
\end{proof}

The following theorem is the main result of this section:

\begin{theorem}\label{monster-theorem}
Given a non-empty linear system $\LL=\LL_{n,d}(m_1,\dots,m_s)$ and $D\in\LL$,
then the following holds.
\begin{enumerate}\item[(i)]
Assume that $r$ is an integer such that $1\le r \le\min(n,s)-1$. Then
$h^i(X^n_{(r)},D_{(r)})=h^i(X^n_{(r-1)},D_{(r-1)})$, for $i\leq r-1$, and 
$h^i(X^n_{(r)},D_{(r)})=0$, for $i\ge r+2$. Moreover
\[
h^r(X^n_{(r)},D_{(r)})-h^{r+1}(X^n_{(r)},D_{(r)})=
h^r(X^n_{(r-1)},D_{(r-1)}) -
\sum_{I(r)} {{n+k_{I(r)}-r-1}\choose{n}}.
\] 
\item[(ii)] 
Assume that $s\le n+2$ and that 
 $r$ is such that $0\le r\le\min(n,s)-1$. Then
$h^i(X^n_{(r)},D_{(r)})=0$,  for $i\ge 1$, $i\neq r+1$. Moreover, if $\bar{r}$
is the dimension of the linear base locus in $\ls$, then 
$h^{r+1}(X^n_{(r)},D_{(r)})=0$, for $r\ge \bar{r}$.
\end{enumerate}
\end{theorem}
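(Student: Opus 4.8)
The plan is to prove both parts simultaneously by induction on the blow\Ndash up level $r$, the engine being the exact sequence comparing the strict transform $D_{(r)}$ with the pullback of $D_{(r-1)}$. On $X^n_{(r)}$, equation \eqref{strict transform} gives $D_{(r)}=(\pi^n_{(r)})^\ast D_{(r-1)}-\sum_{I(r)}k_{I(r)}E_{I(r)}$, and since the strict transforms of distinct $r$\Ndash cycles are disjoint in $X^n_{(r-1)}$ — their set\Ndash theoretic intersections, being lower\Ndash dimensional cycles $L_{I\cap J}$, have already been blown up (condition (3)) — the associated restriction sequence splits as a direct sum over $I(r)$:
\[
0\to \OO_{X^n_{(r)}}(D_{(r)})\to \OO_{X^n_{(r)}}\big((\pi^n_{(r)})^\ast D_{(r-1)}\big)\to \mathcal{Q}\to 0,\qquad \mathcal{Q}=\bigoplus_{I(r)}\OO_{k_{I(r)}E_{I(r)}}\big((\pi^n_{(r)})^\ast D_{(r-1)}\big).
\]
By Remark \ref{blow up preserves h^i} the middle term carries the cohomology of $D_{(r-1)}$ on $X^n_{(r-1)}$, so the whole statement reduces to computing $H^\bullet(\mathcal{Q})$ and reading off the long exact sequence.

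Next I would compute $H^\bullet(\mathcal{Q})$. On each $E_{I(r)}\cong X^r_{(r-1)}\times\PP^{n-r-1}$ I would filter $\OO_{k_{I(r)}E_{I(r)}}$ by powers of $E_{I(r)}$; using Lemma \ref{lemma-EdotE} and Remark \ref{remark-intersection}, the graded pieces are the line bundles $\OO(-(k_{I(r)}-j)\,x_{I(r)},\,j)$ for $0\le j\le k_{I(r)}-1$. By Künneth the $\PP^{n-r-1}$ factor contributes only in degree $0$ (dimension $\binom{n-r-1+j}{j}$), so everything hinges on the sub\Ndash lemma: for $m\ge 1$ the group $H^i(X^r_{(r-1)},\OO(-m\,x_{I(r)}))$ vanishes for $i\neq r$ and has dimension $\binom{m-1}{r}$ for $i=r$. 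I would prove this by Serre duality, using the identity $K_{X^r_{(r-1)}}+x_{I(r)}=-h$, which is immediate from \eqref{canonical} and \eqref{def-x}: it rewrites the statement as higher vanishing plus an $h^0$\Ndash count for $K+m\,x_{I(r)}$. Since $x_{I(r)}$ is the pullback of the hyperplane class under the Cremona involution of $\PP^r$, which becomes a regular automorphism once all coordinate subspaces of dimension $\le r-2$ are blown up, $x_{I(r)}$ is nef and big; Kawamata\Ndash Viehweg then gives the vanishing and $h^0=\chi=\binom{m-1}{r}$ follows by Riemann\Ndash Roch. Consequently $H^\bullet(\mathcal{Q})$ is concentrated in degree $r$ with $h^r(\mathcal{Q})=\sum_{I(r)}\sum_{j=0}^{k_{I(r)}-1}\binom{k_{I(r)}-1-j}{r}\binom{n-r-1+j}{j}$, which the Vandermonde convolution collapses to $\sum_{I(r)}\binom{n+k_{I(r)}-r-1}{n}$.

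The third step is pure bookkeeping in the long exact sequence. Concentration of $\mathcal{Q}$ in degree $r$ gives the isomorphisms $H^i(D_{(r)})\cong H^i(D_{(r-1)})$ for $i\le r-1$, and, together with the inductive vanishing $h^i(D_{(r-1)})=0$ for $i\ge r+1$, the vanishing $h^i(D_{(r)})=0$ for $i\ge r+2$; the base case $r=0$ is $h^i(D_{(0)})=0$ for $i\ge 2$, which is the vanishing recorded in the Introduction. What survives is the four\Ndash term sequence $0\to H^r(D_{(r)})\to H^r(D_{(r-1)})\xrightarrow{\phi_r}H^r(\mathcal{Q})\to H^{r+1}(D_{(r)})\to 0$, and taking alternating dimensions yields the Euler relation of part (i) at once.

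Finally, for part (ii) (where $s\le n+2$) the extra content is that among $i\ge1$ only $h^{r+1}$ survives, i.e. that $\phi_r$ is injective (forcing $h^r(D_{(r)})=0$), and that $\phi_{\bar r}$ is moreover surjective (forcing $h^{\bar r+1}(D_{(\bar r)})=0$; for $r>\bar r$ nothing is blown up and the vanishing is automatic from (i)). This injectivity/surjectivity of the restriction maps $\phi_r$ is the main obstacle, and it is exactly where $s\le n+2$ is indispensable: I would exploit condition (3), the \emph{exact} base\Ndash locus multiplicities of Proposition \ref{sharpBLL}, and the toric structure of $X^n_{(r)}$ for at most $n+2$ points (the same structure underlying the cone computation of Proposition \ref{cones-theorem}). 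Concretely, I expect to identify inductively the class $H^r(D_{(r-1)})$ as supported along the exceptional loci of the $(r-1)$\Ndash cycles nested inside the $r$\Ndash cycles $L_{I(r)}$, so that restriction to the divisors $E_{I(r)}$ detects it faithfully; making this detection argument precise — rather than the homological machinery of parts one through three — is the hard part.
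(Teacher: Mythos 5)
Your treatment of part (i) is correct and takes a genuinely different route from the paper at the crucial point. The paper (Proposition \ref{toric implies normal bundle}) peels off the exceptional divisors one multiple at a time in lexicographic order and proves the key vanishing $h^i\bigl(\OO_{X^r_{(r-1)}}(-m\,x_{I(r)})\bigr)=0$ for $i\neq r$, $h^r=\binom{m-1}{r}$, by Serre duality followed by a nested induction: the dual divisor is matched, via statement $(B_{r,r+1,r-1})$, with the strict transform of an auxiliary system $y$ on $X^r_{(0)}$, whose cohomology is controlled by the inductive statements $(A_{r,r+1,\rho})$, $(B_{r,r+1,\rho})$, $(C_{r,r+1,r-1})$ in dimension $r$ with $r+1$ points. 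You instead observe that the standard Cremona involution is a regular automorphism of the toric variety $X^r_{(r-1)}$, so that $x_{I(r)}=\sigma^\ast h$ is nef and big, and then Kawamata--Viehweg applied to $K+m\,x_{I(r)}$ (using $K+x_{I(r)}=-h$) gives the vanishing directly; the count $h^0(K+m\,x_{I(r)})=\binom{m-1}{r}$ is cleanest not by a raw Riemann--Roch computation but by noting $\sigma^\ast(K+m\,x_{I(r)})=K+mh$ and pushing forward to $\PP^r$. This is a real simplification: it decouples part (i) from the dimension induction, needing only induction on $r$ for the long-exact-sequence bookkeeping, and your filtration argument also recovers statement $(B_{n,s,r})$ as a byproduct (the graded pieces $\OO(-jx,\,k-j)$ with $j\le r$ have vanishing cohomology).

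However, part (ii) is a genuine gap, and you say so yourself. You correctly reduce it to the injectivity of the restriction maps $\phi_r\colon H^r(D_{(r-1)})\to H^r(\mathcal{Q})$ for all $r$, plus surjectivity at $r=\bar r$, but you offer only the expectation that $H^r(D_{(r-1)})$ can be ``detected'' on the exceptional divisors $E_{I(r)}$, with no mechanism for proving it and no concrete use of the hypothesis $s\le n+2$ beyond invoking Proposition \ref{sharpBLL} and the toric structure. The paper never proves this injectivity/surjectivity directly: its Proposition \ref{normal bundle implies toric and n+2} works by a completely different induction, namely Castelnuovo exact sequences obtained by restricting $D_{(\bar r)}$ to the strict transform $H_{(\bar r)}$ of a hyperplane through $\min(n,s)$ of the points, identifying the trace with a system $\LL'$ in $\PP^{n-1}$ with one fewer point and the kernel (via statement $(B)$, which absorbs the correction terms $E_{I(\rho)}$ with $\{n+1,n+2\}\subseteq I(\rho)$ when $s=n+2$) with a system $\hat\LL$ of degree $d-1$, iterating until the kernel's linear base locus has dimension $\le\bar r-1$, and closing the loop with the double induction on $(n,R)$ through statements $(C_{n-1,s-1,R})$ and $(C_{n,s,R-1})$, together with Remark \ref{hyperplanes in base locus} for the case $\bar r=n-1$. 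Since part (ii) is precisely what yields Corollary \ref{corollary-monster} and the linear non-speciality for $s\le n+2$, your proposal proves only half of the theorem; to complete it along the paper's lines you would need to set up this restriction-to-hyperplane induction (or else supply an actual proof of the injectivity of $\phi_r$, which is essentially equivalent to the statement being proved and is not easier).
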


\begin{remark}
Notice that Theorem \ref{monster-theorem}, part $(ii)$ states
 that for a linear system $\LL_{n,d}(m_1,\dots,m_s)$, with $s\le n+2$, 
and any divisor $D\in\LL$ we have
\[ 
h^i(X^n_{(\bar{r})},D_{(\bar{r})})=0,\quad \mbox{ for all $i\ge1$.}
\]
This means that
after blowing up the linear base locus we get non-speciality of the strict transform
 $D_{(\bar{r})}$. 
In other words if $s\le n+2$ we are able to affirmatively answer to Question \ref{question}. 
\end{remark}

The following is an immediate consequence of 
Theorem \ref{monster-theorem}.

\begin{corollary}\label{corollary-monster}
If $\LL=\LL_{n,d}(m_1,\dots,m_s)$ is a non-empty linear system 
with $s\le n+2$, 
then its speciality is given  by
\begin{equation}
h^1(\ls)=\sum_{\substack{I(r),\\ 1\leq  r\leq \bar{r}}}
 (-1)^{r-1}\binom{n+k_{I(r)}-r-1}{n},
\end{equation}\label{total h^1}
where $\bar r$ is the dimension of the base locus. 

In particular $\dim(\LL)=\ldim(\LL)$ and $\LL$ is linearly non-special.
\end{corollary}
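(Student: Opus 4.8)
The plan is to derive Corollary~\ref{corollary-monster} directly from Theorem~\ref{monster-theorem}, part $(ii)$, by an inductive bookkeeping of Euler characteristics across the tower of blow-ups $X^n_{(0)}\to\cdots\to X^n_{(\bar r)}$. First I would record the vanishing provided by the theorem: for $s\le n+2$ and each $0\le r\le\bar r$ we have $h^i(X^n_{(r)},D_{(r)})=0$ for all $i\ge 1$ with $i\neq r+1$, and moreover $h^{r+1}(X^n_{(r)},D_{(r)})=0$ once $r\ge\bar r$. In particular, taking $r=\bar r$, the strict transform $D_{(\bar r)}$ is non-special: $h^i(X^n_{(\bar r)},D_{(\bar r)})=0$ for every $i\ge 1$. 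This is the geometric input; the rest is combinatorics.

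The key step is to track how $h^1$ accumulates as we undo the blow-ups, i.e.\ as $r$ runs from $\bar r$ down to $0$. I would use the identity from Theorem~\ref{monster-theorem}(i),
\[
h^r(D_{(r)})-h^{r+1}(D_{(r)})=h^r(D_{(r-1)})-\sum_{I(r)}\binom{n+k_{I(r)}-r-1}{n},
\]
together with the stability statement $h^i(D_{(r)})=h^i(D_{(r-1)})$ for $i\le r-1$. The strategy is to compute $h^1(\LL)=h^1(X^n_{(0)},D_{(0)})$ by comparing it with the (vanishing) higher cohomology of $D_{(\bar r)}$. Concretely, for each $r$ with $1\le r\le\bar r$, part $(ii)$ forces all cohomology of $D_{(r)}$ to be concentrated in degrees $0$ and $r+1$, so the displayed identity reads $-h^{r+1}(D_{(r)})=h^r(D_{(r-1)})-\sum_{I(r)}\binom{n+k_{I(r)}-r-1}{n}$ when $r\ge 2$, and for the bottom layers one unwinds the dependence of $h^r(D_{(r-1)})$ on the previous stage using the stability in low degrees. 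Summing these relations telescopically over $r=1,\dots,\bar r$, with the boundary condition $h^{\bar r+1}(D_{(\bar r)})=0$, isolates $h^1(D_{(0)})$ as the alternating sum $\sum_{r=1}^{\bar r}(-1)^{r-1}\sum_{I(r)}\binom{n+k_{I(r)}-r-1}{n}$, which is exactly the asserted formula.

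I would then read off the final assertions. By Remark~\ref{blow up preserves h^i} the blow-ups preserve $h^0$, so $h^0(\LL)=h^0(D_{(\bar r)})$ and hence $\dim(\LL)=h^0(D_{(\bar r)})-1=\chi(D_{(\bar r)})-1$ since $D_{(\bar r)}$ has no higher cohomology. Comparing $\chi(D_{(\bar r)})$ with the definition of $\ldim$ via the contributions $(-1)^{r+1}\binom{n+k_{I(r)}-r-1}{n}$ (Definition~\ref{new-definition}, using the remark that the terms for $n\le r\le s-1$ vanish when $\LL$ is non-empty) shows that $\ldim(\LL)=\chi(D_{(\bar r)})-1=\dim(\LL)$, so $\LL$ is linearly non-special.

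The main obstacle is the telescoping step: one must carefully align the two parts of Theorem~\ref{monster-theorem}, since part $(i)$ gives a difference $h^r-h^{r+1}$ while part $(ii)$ supplies the vanishing that lets one split this difference into its two summands at each level. The bookkeeping is delicate precisely because the nonzero cohomology shifts up by one degree at each blow-up, so I expect to verify by a clean downward induction on $r$ that $h^{r+1}(D_{(r)})=\sum_{\rho>r}(-1)^{\rho-r-1}\sum_{I(\rho)}\binom{n+k_{I(\rho)}-\rho-1}{n}$, with the base case $r=\bar r$ furnished by the vanishing $h^{\bar r+1}(D_{(\bar r)})=0$; evaluating this at $r=0$ yields the claimed expression for $h^1(\LL)$.
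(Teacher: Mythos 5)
Your proposal is correct and follows essentially the same route the paper intends: the paper declares the corollary an ``immediate consequence'' of Theorem \ref{monster-theorem}, and your downward induction $h^{r+1}(D_{(r)})=\sum_{\rho>r}(-1)^{\rho-r-1}\sum_{I(\rho)}\binom{n+k_{I(\rho)}-\rho-1}{n}$, obtained by combining the identity of part (i) with the vanishing $h^{r}(D_{(r)})=0$ from part (ii) and the base case $h^{\bar r+1}(D_{(\bar r)})=0$, is exactly the content of the unlabeled corollary that follows in the paper, specialized to $r=0$. The only slip is inessential: the preservation of $h^0$ along the tower follows not from Remark \ref{blow up preserves h^i} (which concerns total transforms) but directly from the stability statement $h^i(D_{(r)})=h^i(D_{(r-1)})$ for $i\le r-1$ in part (i), which you already invoke.
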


More precisely the following corollary describes the cohomology of the strict transform $D_{(r)}$
for any $r$.

\begin{corollary}
Let $\LL=\LL_{n,d}(m_1,\dots,m_s)$ be a non-empty linear system 
with $s\le n+2$ and with linear base locus of dimension $\bar{r}$.
For any divisor $D\in\LL$ and for  $0\le r \le \bar{r}$ we have
$$h^i(D_{(r)})=0,\quad i\ne 0, r+1,$$ 
$$
h^{r+1}(D_{(r)})=\sum_{\substack{I(\rho),\\ r+1\leq  \rho\leq \bar{r}}}
 (-1)^{\rho-r-1}\binom{n+k_{I(\rho)}-\rho-1}{n}.
$$
\end{corollary}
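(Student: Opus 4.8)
The plan is to deduce everything from the two parts of Theorem \ref{monster-theorem}, treating the statement as a bookkeeping exercise on the single ``interesting'' cohomology group at each blow-up stage. The first assertion, that $h^i(D_{(r)})=0$ for $i\neq 0,r+1$, is nothing but part $(ii)$ of Theorem \ref{monster-theorem}: it gives $h^i(D_{(r)})=0$ for all $i\ge 1$ with $i\neq r+1$, while the indices $i<0$ are vacuous. So the real content is the explicit value of $h^{r+1}(D_{(r)})$.

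First I would fix notation: write $a_t:=h^{t+1}(X^n_{(t)},D_{(t)})$ for $0\le t\le\min(n,s)-1$, and $c_t:=\sum_{I(t)}\binom{n+k_{I(t)}-t-1}{n}$. By part $(ii)$, $a_t$ is the \emph{only} higher cohomology group that can be nonzero at level $t$; in particular $a_t=0$ for $t\ge\bar r$, so $a_{\bar r}=0$ will serve as the base of an induction. The goal then becomes to prove $a_r=\sum_{\rho=r+1}^{\bar r}(-1)^{\rho-r-1}c_\rho$ for $0\le r\le\bar r$.

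The key step is to extract a recursion from the difference formula in part $(i)$. Applying it with index $t$, for $1\le t\le\min(n,s)-1$, gives
\[
h^t(D_{(t)})-h^{t+1}(D_{(t)})=h^t(D_{(t-1)})-c_t.
\]
Here I would invoke part $(ii)$ to simplify both sides: on the left $h^t(D_{(t)})=0$, since $t\ge 1$ and $t\neq t+1$, so the left-hand side equals $-a_t$; on the right the distinguished index at level $t-1$ is $(t-1)+1=t$, so $h^t(D_{(t-1)})=a_{t-1}$ and every other higher group vanishes. This yields the clean recursion $a_t+a_{t-1}=c_t$. The only care required is the index shift, namely recognizing that $h^t(D_{(t-1)})$ is exactly the surviving term $a_{t-1}$ and not some other group.

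Finally I would solve the recursion by downward induction on $r$. The base case $r=\bar r$ is $a_{\bar r}=0$, which matches the empty sum. For the inductive step, setting $t=r+1$ in $a_t+a_{t-1}=c_t$ gives $a_r=c_{r+1}-a_{r+1}$; substituting the inductive formula for $a_{r+1}$ and absorbing $c_{r+1}$ as the $\rho=r+1$ term produces the desired alternating sum (the range condition $r+1\le\bar r\le\min(n,s)-1$ guarantees part $(i)$ applies at each stage). Specializing to $r=0$ recovers Corollary \ref{corollary-monster}, a useful consistency check. I expect the only genuine obstacle to be the correct identification, at each stage, of which of the finitely many cohomology groups the vanishing statements of Theorem \ref{monster-theorem} actually kill; once that bookkeeping is pinned down, the remaining algebra is a routine telescoping.
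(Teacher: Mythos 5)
Your proof is correct and is exactly the intended derivation: the paper states this corollary without proof as an immediate consequence of Theorem \ref{monster-theorem}, and your argument—using part $(ii)$ to isolate the single surviving group $a_t=h^{t+1}(D_{(t)})$, part $(i)$ to get the recursion $a_{t-1}+a_t=\sum_{I(t)}\binom{n+k_{I(t)}-t-1}{n}$, and downward induction from $a_{\bar r}=0$—is precisely the telescoping the authors leave implicit. The index ranges you check ($1\le t\le \bar r\le \min(n,s)-1$) are the right ones, so there is no gap.
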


Note that, by Lemma \ref{effectivity}, 
the previous corollaries hold
for all linear systems satisfying conditions \eqref{condition-mult}.

\subsection{Proof of the main theorem}

We will split the proof of Theorem \ref{monster-theorem} in various steps and 
for this purpose we need to introduce the following notation.

Given the integers $n\ge 1$, $s\ge 0$ and $r$ with $1\le r\le\min(n,s)-1$,
we abbreviate by $(A_{n,s,r})$, $(B_{n,s,r})$ the following statements.\\
\begin{quote}
\begin{enumerate}\item[$(A_{n,s,r}):$]
 For any linear system $\LL=\LL_{n,d}(m_1,\dots,m_s)$ and  $D\in\LL$   
we have:
\begin{align*}
\quad \quad \quad h^i(D_{(r)})&=h^i(D_{(r-1)}),\quad  \mbox{ for $i\leq r-1$},\\
h^i(D_{(r)})&=0,\quad  \mbox{for $i\ge r+2$},\\
h^r(D_{(r)})&-h^{r+1}(D_{(r)}) =h^r(D_{(r-1)})-
\sum_{I(r)} {{n+k_{I(r)}-r-1}\choose{n}}.
\end{align*}\end{enumerate}

\begin{enumerate}\item[$(B_{n,s,r}):$]
For any linear system $\LL=\LL_{n,d}(m_1,\dots,m_s)$ and $D\in \LL$, 
 for any integer $l_{I(r)}$ with  $0\le l_{I(r)}\le\min(r,k_{I(r)})$ we have:
\[
h^i(D_{(r)})=h^i(D_{(r)}+\sum_{I(r)} l_{I(r)} E_{I(r)}), \quad 
\mbox{ for $i\ge 0$.}
\] 
\end{enumerate}\end{quote}

Given the integers $n,s,R$, with $n\ge 1$, $0\le s\le n+2$ and $0 \le R\le\min(n,s)-1$,
we abbreviate by $(C_{n,s,R})$ the following statement.\\
\begin{quote}
\begin{enumerate}\item[$(C_{n,s,R}):$]
For any linear system $\LL=\LL_{n,d}(m_1,\dots,m_s)$ 
with linear base locus of dimension $R$,  any $D\in\LL$ and
 any $r$ with $0\le r \le\min(n,s)-1$ we have:
\begin{align*}
h&^i(D_{(r)})=0,\quad  \mbox{ for $i\ge 1$, $i\neq r+1$},\\
h&^{r+1}(D_{(r)})=0, \quad \mbox{ for $r\ge R$}.\\
\end{align*}
\end{enumerate}\end{quote}

The proof of Theorem \ref{monster-theorem} will be by induction
and the two following propositions provide the inductive steps.

\begin{proposition}\label{toric implies normal bundle}
Let $n\ge 2$, $s\ge0$ and $1\le r\le\min(n,s)-1$. 
If for any $\rho$ such that $1\le \rho\le r-1$
statements 
$(A_{r,r+1,\rho})$, $(B_{r,r+1,\rho})$ and statement  $(C_{r,r+1,r-1})$ hold, 
 then statements $(A_{n,s,r})$ and $(B_{n,s,r})$ hold.
\end{proposition}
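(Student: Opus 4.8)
The plan is to study the single blow-up $\pi:=\pi^n_{(r)}\colon X^n_{(r)}\to X^n_{(r-1)}$ one exceptional divisor at a time. Since all linear cycles of dimension $\le r-1$ -- in particular every pairwise intersection $L_{I(r)}\cap L_{J(r)}=L_{I\cap J}$ -- have already been blown up at the previous stages, the strict transforms of the $r$-cycles are pairwise disjoint, so the divisors $E_{I(r)}$ are disjoint and I may treat each $E_I:=E_{I(r)}$ separately. Writing $\pi^\ast D_{(r-1)}=D_{(r)}+\sum_{I(r)}k_{I(r)}E_{I(r)}$, I would interpolate between the strict transform $D_{(r)}$ and the total transform $\pi^\ast D_{(r-1)}$ by the Koszul sequences
\[
0\to\OO(M-E_I)\to\OO(M)\to\OO_{E_I}(M_{|E_I})\to 0,
\]
with $M=\pi^\ast D_{(r-1)}-qE_I$ for $q=0,\dots,k_I-1$. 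Using the product structure $E_I\cong X^r_{(r-1)}\times\PP^{n-r-1}$ together with Lemma \ref{lemma-EdotE}, which gives ${E_I}_{|E_I}=\OO(-x_I,-1)$, and writing $B_I:={D_{(r-1)}}_{|L_{I(r)}}$ for the restricted subsystem $\LL_{r,d}(m_{i_1},\dots,m_{i_{r+1}})$ on $L_{I(r)}\cong\PP^r$ (so that $\pi^\ast D_{(r-1)}$ restricts to $E_I$ with zero fibre degree), each quotient becomes
\[
\OO_{E_I}\bigl(B_I+q\,x_I,\;q\bigr),\qquad 0\le q\le k_I-1.
\]

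By the K\"unneth formula the cohomology of such a quotient is $\bigoplus_{a+b=i}H^a(X^r_{(r-1)},\OO(B_I+q\,x_I))\otimes H^b(\PP^{n-r-1},\OO(q))$. Since $q\ge 0$, the projective-space factor contributes only in degree $b=0$, of dimension $\binom{q+n-r-1}{n-r-1}$, so everything reduces to the cohomology of $\OO(B_I+q\,x_I)$ on the base $X^r_{(r-1)}$. Here I would exploit that $x_I$ is the Cremona transform of the hyperplane class $h$ (the remark after \eqref{def-x}) and that the standard Cremona involution $\sigma$ based at the $r+1$ points spanning $L_{I(r)}$ lifts to a biregular automorphism of $X^r_{(r-1)}$ (cf.\ the Weyl-group action of Section \ref{base-locus-section}); hence $H^a(X^r_{(r-1)},\OO(B_I+q\,x_I))\cong H^a(X^r_{(r-1)},\OO(\sigma(B_I)+q\,h))$. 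A direct computation of $\sigma(B_I)$, equivalently of $c=m_{i_1}+\dots+m_{i_{r+1}}-(r-1)d$, shows that $\sigma(B_I)$ has $h$-degree $-k_I$, so $\sigma(B_I)+q\,h$ is the strict transform of a linear system of degree $q-k_I\in\{-k_I,\dots,-1\}$; the same class is reached by Serre duality via the canonical bundle $K_{X^r_{(r-1)}}=\OO(-x_I-h)$ coming from \eqref{canonical} and \eqref{def-x}.

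This degree window is exactly what separates the two statements. For the top $\min(r,k_I)$ values $q\in\{k_I-\min(r,k_I),\dots,k_I-1\}$ the degree $q-k_I$ lies in $[-r,-1]$, the range in which $\OO_{\PP^r}$ of that degree is acyclic; the inductive hypotheses $(A_{r,r+1,\rho})$ and $(B_{r,r+1,\rho})$ for $\rho\le r-1$ let me descend through the tower $X^r_{(r-1)}\to\cdots\to X^r_{(0)}$ while adjusting exceptional multiplicities, and $(C_{r,r+1,r-1})$ then forces $H^a(X^r_{(r-1)},\OO(\sigma(B_I)+q\,h))=0$ for all $a$. Thus every quotient in this range is acyclic, the Koszul sequences become isomorphisms, and $(B_{n,s,r})$ follows. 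For the remaining values $q<k_I-r$ (which occur only when $k_I>r$) the degree drops to $\le -r-1$, the base cohomology concentrates in top degree $r$, and the product with $H^0(\PP^{n-r-1},\OO(q))$ places all surviving quotient cohomology in degrees $r$ and $r+1$. Feeding this into the long exact sequences, and using Remark \ref{blow up preserves h^i} to identify $h^i(\pi^\ast D_{(r-1)})=h^i(D_{(r-1)})$, gives $h^i(D_{(r)})=h^i(D_{(r-1)})$ for $i\le r-1$ and $h^i(D_{(r)})=0$ for $i\ge r+2$; the computation $\dim H^r(X^r_{(r-1)},\OO(\sigma(B_I)+q\,h))=\binom{k_I-q-1}{r}$ together with the Vandermonde-type convolution
\[
\sum_{q=0}^{k_I-r-1}\binom{k_I-q-1}{r}\binom{q+n-r-1}{n-r-1}=\binom{n+k_I-r-1}{n}
\]
collapses the layer contributions to the term $\sum_{I(r)}\binom{n+k_I-r-1}{n}$, yielding the displayed formula of $(A_{n,s,r})$.

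In my view the main obstacle is twofold. First, the base-factor vanishing $H^\ast(X^r_{(r-1)},\OO(\sigma(B_I)+q\,h))=0$ for $q$ in the top range genuinely needs the full inductive package at level $(r,r+1)$: one must identify the dimension of the linear base locus of the auxiliary systems so that $(C_{r,r+1,r-1})$ applies, and check that the Cremona reduction creates no unexpected fixed components. Second, the Euler-characteristic matching of the accumulated contributions to the clean binomial $\binom{n+k_I-r-1}{n}$ is a combinatorial computation whose bookkeeping -- signs, the distribution between degrees $r$ and $r+1$, and the precise cut-off at $\min(r,k_I)$ -- is where the argument is most delicate; testing it against the cone systems $\LL_{n,d}(d^s)$ of Proposition \ref{cones-theorem}, whose $h^1$ is already known, gives a reliable consistency check.
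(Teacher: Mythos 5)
Your architecture is the paper's own: the same interpolating restriction sequences between total and strict transform, Lemma \ref{lemma-EdotE} plus K\"unneth on $E_{I(r)}\cong X^r_{(r-1)}\times\PP^{n-r-1}$, the same two degree windows, and the same Vandermonde convolution collapsing the layers to $\binom{n+k_I-r-1}{n}$. The genuine gap is in the step where you compute $H^a\bigl(X^r_{(r-1)},\OO(B_I+q\,x_I)\bigr)$. The statements $(A_{r,r+1,\rho})$, $(B_{r,r+1,\rho})$, $(C_{r,r+1,r-1})$ are assertions about divisors $D\in\LL$ of \emph{non-empty} linear systems; the classes $\sigma(B_I)+qh$ to which you apply them have degree $q-k_I\le -1$ and are never effective, so ``descending through the tower with $(A)$, $(B)$'' and then ``forcing vanishing with $(C)$'' is not licensed by anything you have assumed. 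In particular the vanishing of \emph{all} cohomology in the window $q-k_I\in[-r,-1]$, on which $(B_{n,s,r})$ rests, is simply asserted. The paper avoids this by applying Serre duality \emph{first}: since $K_{X^r_{(r-1)}}=-x_I-h$ by \eqref{canonical} and \eqref{def-x}, one has $h^a((q-k_I)x_I)=h^{r-a}((k_I-q-1)x_I-h)$, and the class $(k_I-q-1)x_I-h$ equals the strict transform $\widetilde{y}$ of the explicit auxiliary system $y=\LL_{r,ar-1}\bigl((a(r-1))^{r+1}\bigr)$, $a=k_I-q-1$, plus exceptional divisors that $(B_{r,r+1,r-1})$ allows one to discard; it is to $\widetilde{y}$ that $(C_{r,r+1,r-1})$ and the count $h^0(\widetilde{y})=\binom{a}{r}$ are applied. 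You mention this duality only parenthetically and never perform the conversion, so the inductive hypotheses of the Proposition never actually enter your argument in a form they can support.

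Notably, your Cremona idea, carried out consistently, would repair this and more. You already use (via Lemma \ref{lemma-EdotE}, Remark \ref{remark-intersection}, i.e.\ \eqref{equation-third term}) that the total transform restricts to $E_I$ with first factor $-k_I x_I$; equivalently $B_I\sim -k_I x_I$ in $\Pic(X^r_{(r-1)})$. If the Cremona involution based at the $r+1$ points of $I$ lifts to a biregular automorphism $\sigma$ of $X^r_{(r-1)}$ with $\sigma^\ast h=x_I$, then $\sigma^\ast(B_I+q\,x_I)\sim (q-k_I)h$ is a pull-back from $\PP^r$, so by Remark \ref{blow up preserves h^i} its cohomology equals that of $\OO_{\PP^r}(q-k_I)$: zero for $-r\le q-k_I\le -1$, and concentrated in degree $r$ with dimension $\binom{k_I-q-1}{r}$ for $q\le k_I-r-1$. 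That gives both windows at once, with no appeal to $(A)$, $(B)$, $(C)$ at level $(r,r+1)$ at all, so the Proposition would even lose its hypotheses. But the biregularity of the lift is itself a nontrivial fact you would have to prove: your reference to the Weyl-group action of Section \ref{base-locus-section} does not suffice, since on the point blow-up $X^r_{(0)}$ Cremona is only a pseudo-automorphism for $r\ge 3$, and pseudo-automorphisms need not preserve $h^i$ for $i\ge 1$. (The fact is true: for $r+1$ coordinate points, $X^r_{(r-1)}$ is the toric variety obtained by blowing up all coordinate subspaces in increasing dimension, and Cremona is induced by the lattice involution $v\mapsto -v$, which preserves this fan.) As written, without either this fact or the paper's Serre-duality detour, the middle of your proof does not go through.
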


\begin{remark} 
Notice that, by applying Proposition \ref{toric implies normal bundle} in the case $r=1$, 
we get that statement 
$(C_{1,2,0})$, which  holds trivially true,
implies $(A_{n,s,1})$ and $(B_{n,s,1})$, for any $n$ and $s$.
This result was already proved by Laface and Ugaglia,
see \cite[Theorem 2.1]{laface-ugaglia-BullBelg}. 
\end{remark}

\begin{proof}[Proof of Proposition \ref{toric implies normal bundle}]
We define $\mathcal{I}:=\{I\subset\{1,\dots,s\}: k_I>0\}$, so that
the set of linear cycles $\Lambda(\mathcal{I})$ is the support of the linear 
base locus of $\LL$. 
Notice that, under the effectivity assumption on $\ls$, the set $\Lambda(\mathcal{I})$ is a subspace arrangement satisfying conditions
(1),(2) and (3) of Section \ref{section4.1}. 
In particular (2) is satisfied since $m_i\le d$, and (3) holds because it is easy to prove, 
by using \eqref{condition-mult}, that two disjoint index sets $I$ and $J$ belong to
 $\mathcal{I}$ only if $|I|+|J|\le n+1$.

Fix   a multi-index $I=I(r)\in\mathcal{I}$.
Let $E_I$ be the exceptional divisor of the corresponding linear $r$-cycle $L_I\subset\PP^n$ and denote by $F_I$  
the following divisor on $X_{(r)}^n$:
\[
F_I=(\pi^n_{(r)})^\ast D_{(r-1)}-\sum_{J\prec I} k_J E_J,
\] 
where $\prec $ is the lexicographic order on the set  of index sets in \red{$\mathcal{I}$} of cardinality $r+1$. 
For $0\le l\le k-1$, consider the exact sequences of sheaves
\[
0\to F_I-(l+1)E_I\to F_I-lE_I\to {(F_I-lE_I)}_{|E_I}\to0.
\]

By Lemma \ref{lemma-EdotE} and Remark \ref{remark-intersection}, we have
\begin{equation}\label{equation-third term}
 {(F_I-l E_I)}_{|E_I}\cong \OO_{X^r_{(r-1)}\times \PP^{n-r-1}}((l-k) x, l)
\end{equation}
where $x=x_{I}$ is defined by \eqref{def-x}.

Now let us compute the dimension of the cohomology groups of \eqref{equation-third term}.
Clearly we have 
$h^i(\OO_{\PP^{n-r-1}}( l))=0$ for all $i\neq 1$ and
$$h^{0}(\OO_{\PP^{n-r-1}}( l))={{n-r-1+l}\choose l}.$$

In order to compute $h^i(\OO_{X^r_{(r-1)}}((l-k)x))$, notice that, 
by Serre duality and  \eqref{canonical}, we have
$$h^i(\OO_{X^r_{(r-1)}}((l-k)x))=h^{r-i}(\OO_{X^r_{(r-1)}}((k-l-1)x-h )).$$

Set $a:=(k-l-1)$. Notice that $0\le a\le k-1$ and consider
on $X^r_{(0)}$ the following divisor:
$$y=(ar-1) h-\sum_{j=1}^{r+1} a(r-1) e_j.$$
By using Proposition \ref{sharpBLL}
one easily computes the strict transform of $y$ via 
$\pi=\pi_{(r-1)}^r\circ\cdots\circ\pi_{(1)}^r$:
$$\widetilde{y}=(ar-1) h- \sum a(r-1)  e_j- \sum (a(r-2)+1)  e_{ij}-
\ldots-\sum (r-1) e_{J(r-1)}.
$$
Since
$$
ax -h =\widetilde{y}+\left(\sum e_{ij}+\ldots +\sum (r-1) e_{J(r-1)}\right),
$$ by $(B_{r,r+1,r-1})$ we conclude that 
$h^i(ax -h )=h^i(\widetilde{y})$, for $i\ge 0$.
On the other hand, as $(C_{r,r+1,r-1})$ holds for $y$, then 
 $h^i(\widetilde{y})=0$ for all $i\ge 1$.
Furthermore one computes 
\[
h^0(\widetilde{y})=\binom{(a+1)r-1}{r}
+\sum_{\substack{I(\rho),\\ 0\leq  \rho\leq r-1}} 
(-1)^{\rho-1}\binom{(a+1)(r-\rho-1)+\rho}{r}
=\binom{a}{r},
\] 
obtaining the first equality by a combined application of statements
$(A_{r,r+1,\rho})$, $(B_{r,r+1,\rho})$,  and $(C_{r,r+1,r-1})$, that hold true for $y$, 
 and the second equality 
 by induction on $r$.
It follows that $h^i((l-k)x)=0$ for all $i\neq r$, and that 
and $h^r((l-k)x)=0$ for $k-r\le l \le k-1$, while
$h^r((l-k)x)=\binom{k-l-1}{r}$ for $0\le l \le k-r-1$. 

By means of K\"unneth formula 
(see e.g.\ \cite[Section 2.1.7]{shafarevich-II}), one easily computes that the only
non-vanishing cohomology group of the sheaf introduced in \eqref{equation-third term} is
\[
h^r((F_I-l E_I)_{|E_I})=\binom{k-l-1}{r}{{n-r-1+l}\choose l} 
\]
for $0\le l\le k-r-1$. 
From this we derive immediately statement $(B_{n,s,r})$ and the fact that
 $h^i(D_{(r)})=h^i(D_{(r-1)})$, for $i\ne r,r+1$ and for every $1\le r\le \min(n,s)-1$. 
In particular we obtain, for $i\ge r+2$, that
 $h^i(D_{(r)})=h^i(D_{(r-1)})=\cdots h^i(D_{(0)})=0$, the last equality being trivially true.
Finally from 
\[
\sum_{l=0}^{k-r-1}{{k-l-1}\choose r}{{n-r-1+l}\choose l}={{n+k-r-1}\choose n},
\]
 which can be easily proved by induction, we get 
$h^r(D_{(r-1)})=h^r(D_{(r)})-h^{r+1}(D_{(r)}) +
\sum{{n+k_{I(r)}-r-1}\choose{n}}$, where the summation 
ranges over all the multi-indices $I(r)\subseteq\{1,\dots,s\}$.
This completes the proof of $(A_{n,s,r})$.
\end{proof}

\begin{remark}\label{simple lines, double planes} 
The geometric meaning of $(B_{n,s,r})$ is that if a linear $r$-cycle is contained with multiplicity
 at most $r$ in the base locus of a linear system $\ls$, then it does not contribute to the speciality 
of $\ls$ (see Example \ref{sextics in P4}).
\end{remark}

\begin{proposition}\label{normal bundle implies toric and n+2}
Let $n\ge 2$, $s\ge0$ and $1\le \bar{r}\le\min(n-1,s)-1$.
If statements $(A_{n,s,\rho})$ and $(B_{n,s,\rho})$ hold for any $1\le \rho\le \bar{r}$ and moreover 
statements $(C_{n,s,R-1})$ and $(C_{n-1,s-1,R})$ hold for any $1 \le R\le\bar{r}$, then
 statement $(C_{n,s,\bar{r}})$ holds. 
\end{proposition}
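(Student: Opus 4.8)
The plan is to prove $(C_{n,s,\bar r})$ by an inner induction on the degree $d$ of $\ls$, treating the hypotheses as the ``lateral'' cases of a nested induction. Fix $D\in\ls$. I would first dispose of the cohomological degrees controlled directly by $(A_{n,s,r})$: arguing by induction on the blow-up level $r$, the relations $h^i(D_{(r)})=h^i(D_{(r-1)})$ for $i\le r-1$ and $h^i(D_{(r)})=0$ for $i\ge r+2$ reduce $(C_{n,s,\bar r})$ to the two vanishing assertions
$$h^r(D_{(r)})=0 \quad (1\le r\le\min(n,s)-1), \qquad h^{\bar r+1}(D_{(\bar r)})=0.$$
The base case $r=0$ is the classical vanishing $h^i(D_{(0)})=0$ for $i\ge2$. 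Moreover, once the first assertion is known, the vanishing $h^{r+1}(D_{(r)})=0$ for every $r>\bar r$ follows by upward induction on $r$ from the difference formula in $(A_{n,s,r})$, since all $r$-cycles with $r>\bar r$ have $k_{I(r)}=0$ and hence contribute nothing; thus the whole of part $(ii)$ of $(C_{n,s,\bar r})$ collapses to the single base case $r=\bar r$.

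The core of the argument is a restriction exact sequence. Assuming first $s\le n+1$, I would pick a general hyperplane $\mathcal H$ through $s-1$ of the points, with strict transform $H_{\mathcal H}$ on $X^n_{(r)}$, and use
$$0\to D_{(r)}-H_{\mathcal H}\to D_{(r)}\to D_{(r)}|_{H_{\mathcal H}}\to0.$$
By the intersection rules of Section \ref{section4.1}, the restricted class $D_{(r)}|_{H_{\mathcal H}}$ is, up to exceptional divisors of multiplicity at most $r$, the level-$r$ strict transform of the $(n-1)$-dimensional system $\ls_{n-1,d}(m_i:p_i\in\mathcal H)$; statement $(B_{n,s,r})$ guarantees these low-multiplicity corrections are cohomologically invisible, so the third term is governed by $(C_{n-1,s-1,R'})$, where $R'\le\bar r$ is the dimension of the restricted base locus. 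Similarly the residual $D_{(r)}-H_{\mathcal H}$ is, again up to $(B)$-corrections, the level-$r$ strict transform of $\ls_{n,d-1}(\dots)$, a system of smaller degree whose base locus has some dimension $R_{\mathrm{res}}\le\bar r$; this term is handled either by the hypothesis $(C_{n,s,R-1})$ when $R_{\mathrm{res}}<\bar r$, or by the induction on $d$ when $R_{\mathrm{res}}=\bar r$. Feeding the vanishing from $(C_{n-1,s-1,R'})$ and from the residual into the long exact sequence kills $h^r(D_{(r)})$ (the restriction term vanishes by part $(i)$ of the lower-dimensional statement, since $r\ne r+1$), and at level $r=\bar r$ it kills $h^{\bar r+1}(D_{(\bar r)})$ (the restriction term now vanishes by part $(ii)$, valid because $\bar r\ge R'$).

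It remains to treat the extremal case $s=n+2$, where no hyperplane can contain $s-1=n+1$ general points and the restriction above is unavailable. Here I would instead use the Weyl group: a standard Cremona transformation based at $n+1$ of the points, exactly as in the proof of Lemma \ref{effectivity}, is an isomorphism of $X^n_{(0)}$ preserving the cohomology of strict transforms, and it lowers the degree whenever the Cremona constant $c=\sum_{i=1}^{n+1}m_i-(n-1)d$ is positive; this lets me descend to strictly smaller degree and invoke the induction on $d$, while the finitely many Cremona-reduced systems fall into a bounded range that either reduces to the $s\le n+1$ analysis or is checked directly. I expect the main obstacle to be precisely the bookkeeping of the linear base locus under restriction and residuation: one must verify that the coefficients of $D_{(r)}|_{H_{\mathcal H}}$ and of $D_{(r)}-H_{\mathcal H}$ differ from the genuine strict transforms only by exceptional divisors of multiplicity $\le r$ (so that $(B_{n,s,r})$ applies), that the restricted and residual base-locus dimensions never exceed $\bar r$ (so the lateral hypotheses and the degree induction close), and that the degree-shifts in the long exact sequence land in the correct cohomological slots. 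Once this geometry is in place, the invocation of $(C_{n-1,s-1,R})$ and $(C_{n,s,R-1})$ is essentially formal.
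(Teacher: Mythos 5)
Your opening reduction (using $(A_{n,s,\rho})$ to collapse $(C_{n,s,\bar r})$ to the two vanishings $h^r(D_{(r)})=0$ and $h^{\bar r+1}(D_{(\bar r)})=0$) and your Castelnuovo-sequence argument for $s\le n+1$ are sound and match the paper's proof in substance; the paper takes the hyperplane through $\min(n,s)$ points and iterates the restriction, reordering the points by multiplicity, until the kernel has linear base locus of dimension $\le\bar r-1$ (then invokes $(C_{n,s,R-1})$), rather than running an inner induction on $d$ --- equivalent mechanisms, though your induction on $d$ would need a base case, since degree-one systems can still have linear base locus of dimension $\bar r$.

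The genuine gap is your treatment of $s=n+2$, and it is twofold. First, the premise is wrong: the restriction argument is not ``unavailable'' there. One simply takes the hyperplane through $n$ of the points rather than $n+1$, which is exactly what the paper does: with $H_{(\bar r)}$ the strict transform of a hyperplane through $p_1,\dots,p_n$, one has
\begin{equation*}
D_{(\bar{r})}-H_{(\bar{r})}=\hat{D}_{(\bar{r})}+
\sum_{\substack{I(\rho)\supseteq\{n+1,n+2\},\\ 1\le \rho\le \bar{r}}}E_{I(\rho)},
\end{equation*}
where $\hat D$ is the strict transform of $\LL_{n,d-1}(m_1-1,\dots,m_n-1,m_{n+1},m_{n+2})$, and the hypothesis $(B_{n,s,\rho})$ renders precisely these coefficient-one exceptional corrections cohomologically invisible. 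So the uniform restriction argument closes the case you set aside, with no new idea needed.

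Second, the Cremona substitute you propose in its place fails. For $n\ge 3$ the standard Cremona transformation does not lift to an isomorphism of $X^n_{(0)}$ (only to a pseudo-automorphism, biregular only after blowing up the linear cycles up to dimension $n-2$), and it does not preserve higher cohomology of strict transforms on $X^n_{(0)}$ --- it preserves only $h^0$. Concretely, take $\ls=\LL_{3,4}(3,3,2,2,1)$, so $s=n+2=5$: by Corollary \ref{corollary-monster} its speciality at level zero is $h^1(D_{(0)})=\binom{3+k_{12}-2}{3}=\binom{3}{3}=1$ (the line $L_{12}$ has $k_{12}=2$), whereas its Cremona image $\LL_{3,2}(1,1,0,0,1)$ is non-special, $h^1=0$; both have $h^0=7$. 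Since statement $(C_{n,s,\bar r})$ is exactly a collection of such higher-cohomology vanishings at all levels, transporting it through a Cremona map is not ``formal'' and, as stated, is false. Finally, the fallback that ``the finitely many Cremona-reduced systems fall into a bounded range that ... is checked directly'' is not available: Cremona-reduced systems with $n+2$ points and nontrivial linear base locus form an infinite family, e.g.\ $\LL_{3,d}(d-1,d-1,1,1,1)$ for every $d\ge3$ is Cremona reduced, non-empty, and contains the line $L_{12}$ with multiplicity $d-2$ in its base locus, so $(C_{3,5,1})$ for these systems is precisely the kind of statement you are trying to prove and your argument never reaches them. Any repair of the Cremona route (working on the top blow-up, matching Weyl images with strict transforms) would still need the restriction argument for the reduced systems --- which is the paper's proof.
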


\begin{proof}
Recall that  $\LL=\LL_{n,d}(m_1,\dots,m_s)$ satisfies condition
 (\ref{condition-mult}), by  Lemma \ref{effectivity}.

Set $\sigma:=\min(n,s)$. Define the linear system
$\ls':=\LL_{n-1,d}(m_1,\dots,m_\sigma)$  in $\mathbb{P}^{n-1}$ and notice 
that it also satisfies (\ref{condition-mult}). 
Moreover define $\hat{\ls}$ to be the linear system in $\mathbb{P}^n$ given either by 
$\hat{\ls}:=\LL_{n,d-1}(m_1-1,\dots,m_{\sigma}-1)$ if $s\le n$, or by
$\hat{\ls}:=\LL_{n,d-1}(m_1-1,\dots,m_{n}-1,m_{n+1})$ if $s=n+1$, or by
$\hat{\ls}:=\LL_{n,d-1}(m_1-1,\dots,m_{n}-1,m_{n+1},m_{n+2})$ if $s=n+2$. 
It also satisfies (\ref{condition-mult}), for all $s\le n+2$.

Let now
\begin{align*}
D_{(\bar{r})}&=dH-\sum_{i=1}^{s} m_i E_i-\sum_{\substack{I(\rho),\\1\le \rho\le \bar{r}}}k_{I(\rho)}E_{I(\rho)},
\\
H_{(\bar{r})}&=H-\sum_{i=1}^{\sigma} E_{i}-\sum_{\substack{I(\rho),\\I(\rho)\subseteq\{1,\dots,\sigma\}, 
\\1\le \rho\le \bar{r}}} {E}_{I(\rho)}
\end{align*}
be the strict transforms respectively of $\ls$ and of a hyperplane containing
 the points $p_1,\dots,p_{\sigma}$ in $X^n_{(\bar{r})}$ and
consider the following Castelnuovo exact sequence:
\begin{equation}\label{castelnuovo for n+1 n+2}
0\to D_{(\bar{r})}-H_{(\bar{r})}\to D_{(\bar{r})}\to {D_{(\bar{r})}}_{|H_{(\bar{r})}}\to 0.
\end{equation}
We can identify the restricted divisor ${D_{(\bar{r})}}_{|H_{(\bar{r})}}$ 
of the sequence (\ref{castelnuovo for n+1 n+2})
with the strict transform $D'_{(\bar{r})}$ of the general divisor in $\LL'$. 
Moreover if $\hat{D}_{(\bar{r})}$ is the strict transform of a general divisor in $\hat{\ls}$
 in $X^n_{(\bar{r})}$, then if $s\le n+1$ we can identify the divisors 
$D_{(\bar{r})}-H_{(\bar{r})}$ and $\hat{D}_{(\bar{r})}$,
while if $s=n+2$ we have
\[
D_{(\bar{r})}-H_{(\bar{r})}=\hat{D}_{(\bar{r})}+ 
\sum_{\substack{I(\rho)\\\{n+1,n+2\}\subseteq I(\rho),\\ 1\le \rho\le \bar{r}}}E_{I(\rho)}.
\]
From  $(B_{n,s,\rho})$, $1\le \rho\le \bar{r}$, we derive the equalities
$h^i(D_{(\bar{r})}-H_{(\bar{r})})=h^i(\hat{D}_{(\bar{r})})$,  $i\ge 0$.
Notice that the dimension of the linear base locus of  of $\hat{\ls}$ is $\le \bar{r}$.
We iterate the procedure, each time reordering the points with respect to their multiplicity from  the 
highest to the lowest, until   the kernel  corresponds to a divisor whose linear base
 locus has dimension $\le \bar{r}-1$.   
We conclude
that $h^i(D_{(\bar{r})})=0$,  for $i\ge 1$,
by using $(C_{n,s,R-1})$ and $(C_{n-1,s-1,R})$, with $R\le \bar{r}$,

Finally, if $r<\bar{r}$, then statements $(A_{n,s,\rho})$, with $1\le \rho\le \bar{r}$, imply
 $h^i(D_{(r)})=0$, for $i\ge 1$, $i\ne r+1$. 
Indeed if $1\le i\le r$, from $(A_{n,s,\rho})$, $1\le \rho\le r$, 
one deduces the equalities $h^{i}(D_{(r)})=h^{i}(D_{(r+1)})=\cdots=h^i(D_{(\bar{r})})=0$;
while, on the other hand,  if $i\ge r+2$ then statements $(A_{n,s,r})$ implies
 $h^{i}(D_{(r)})=0$.
If $r>\bar{r}$ then $D_{(r)}$ is the total transform in $X_{(r)}$ of $D_{(\bar{r})}$, 
therefore  $h^i(D_{(r)})=h^i(D_{(\bar{r})})=0$, $i\ge 1$.
\end{proof}

\begin{proof}[Proof of Theorem \ref{monster-theorem}]
We will prove  statements $(A_{n,s,r})$, $(B_{n,s,r})$, 
with $n\ge1 , s\ge 0$ and $1\le r\le\min(n,s)-1$,
 by induction on $n$,  and statement $(C_{n,s,R})$, 
with $n\ge1 , s\ge 0$ and $0\le R\le\min(n,s)-1$, 
by induction on $n$ and $R$.

If $n=1$, then statements $(A_{1,s,r})$ and $(B_{1,s,r})$ trivially hold;
furthermore statement $(C_{1,s,R})$ holds as well because
 any linear system in $\PP^1$ is non-special, namely 
$h^i(X^1_{(0)},D_{(0)})=0$,  $i\ge 1$.

In order to prove $(A_{n,s,r})$ for $n\ge 2$ and $1\le r\le \min(n,s)-1$,
 we may assume by induction on $n$ 
that $(A_{r,r+1,\rho})$, $(B_{r,r+1,\rho})$, 
for any $1\le \rho\le r-1$ and $(C_{r,r+1,r-1})$  hold so that
we can apply Proposition \ref{toric implies normal bundle} to obtain $(A_{n,s,r})$ 
and $(B_{n,s,r})$.
This proves part $(i)$.

Now we prove $(C_{n,s,R})$ by induction on $R$.
If $R=0$ then $\LL$ contains only points in its base locus and it is well-known 
to be non-special for $n\ge 1$ and $s\le n+2$, 
hence $(C_{n,s,0})$ holds;
while if $R=n-1$, we reduce  to the case $R\le n-2$ exploiting Remark \ref{hyperplanes in base locus} . 
In order to prove $(C_{n,s,R})$ for the pair $(n,R)$, 
with $n\ge 2$ and $1\le R\le \min(n-1,s)-1$, we 
assume that $(A_{n,s,\rho})$ and $(B_{n,s,\rho})$, for any $\rho$ with
 $1\le \rho\le R-1$, and $(C_{n-1,s-1,R})$, $(C_{n,s,R-1})$ hold.
By applying Proposition \ref{normal bundle implies toric and n+2}
we get $(C_{n,s,R})$ and this complete the proof of part $(ii)$.
\end{proof}

\subsection{Examples}

\begin{example}
Consider the case $\LL=\LL_{n,3}(3^3)$, with $n\ge 3$. Notice that the dimension of
the linear base locus of $\LL$ is $\bar{r}=2$. Indeed,
by Proposition \ref{sharpBLL},
all the lines and the plane spanned by the three base points are triply contained 
in the base locus of $\LL$, i.e. $k_{12}=k_{13}=k_{23}=k_{123}=3$.
Let us compute the cohomologies of the strict transforms $D_{(r)}$, $r\ge 0$, by
means of Theorem \ref{monster-theorem}: 
\begin{itemize}
\item $h^i(X^n_{(0)},D_{(0)})=0$, for $i\ge 2$ and $h^1(X^n_{(0)},D_{(0)})
=-h^2(X^n_{(1)},D_{(1)})+3(n+1)$;
\item $h^i(X^n_{(1)},D_{(1)})=0$, for $i\ge 1, i\ne 2$ and $h^2(X^n_{(1)},D_{(1)})=1$;
\item $h^i(X^n_{(r)},D_{(r)})=0$, for $i\ge 1, r\ge 2$. 
\end{itemize}
Therefore  $h^1(\PP^n,\LL)=h^1(X^n_{(0)}, D_{(0)})=3n+2$ and 
one can hence easily compute
$h^0(\PP^n,\LL)=h^0(X^n_{(r)},D_{(r)})={n\choose 3}$, as 
it was already obtained in Proposition \ref{cones-theorem}.
\end{example}

\begin{example}\label{sextics in P4}
Consider the linear system $\LL_{4,6}(5^3,4,3,2)$. It has virtual dimension
$\vdim(\LL)=-56$ and  linear virtual (and expected) dimension  $\ldim(\LL)=6$. 
The linear
base locus of $\LL$ is formed by multiple points, lines and planes and a
 $3$-dimensional linear cycle, 
by Proposition \ref{sharpBLL},
 namely  $\bar{r}=3$.

Computing the values of the integers $k_{I(r)}$, for $1\le r\le \bar{r}$, 
we see that the contributions of the multiple lines 
to the speciality of $\LL$ is $63$ and that only the plane through the three points of
multiplicity $5$, which is triply contained in the base locus, gives a correction equal to $-1$.
Moreover the $3$-dimensional cycle is simply contained in the base locus so, as we 
noticed in Remark \ref{simple lines, double planes}, it does not create speciality.
Exploiting Theorem \ref{monster-theorem}, we get
\begin{itemize}
\item  $h^1(X^n_{(0)},D_{(0)})
=-h^2(X^n_{(1)},D_{(1)})+63$;
\item $h^2(X^n_{(1)},D_{(1)})=1$;
\end{itemize}
and all the other cohomologies vanish. Therefore $\LL$ is linearly non-special 
and $h^1(\PP^4,\LL)=62$ and $h^0(\PP^4,\LL)=6$.
\end{example}

\section{Linear systems with more than $n+2$ points}
\label{n+3-section}

In this section we will obtain a sufficient condition to be linearly non-special for a linear system 
$\ls_{n,d}(m_1,\dots,m_s)$ 
when $s\ge n+3$.

\begin{lemma}\label{colorful} 
Given integers n,d, $m_i\le d$, consider the two linear systems 
$\ls=\ls_{n,d}(d,m_1,\dots,m_s)$ and
$\ls'=\ls_{n-1,d}(m_1,\dots,m_s)$. 
If $\LL$ and $\LL'$ are non-empty, then we have $\dim(\ls)=\dim(\ls')$, 
and $\ldim(\ls)=\ldim(\ls')$.
\end{lemma}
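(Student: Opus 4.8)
The plan is to exploit that the first point has multiplicity equal to the degree, which forces every divisor in $\ls$ to be a cone. Denote by $p_0$ the point of multiplicity $d$ and by $p_1,\dots,p_s$ the remaining general points. By Proposition \ref{bscone} (applied with $t=1$), the point $p_0$ lies in the base locus of $\ls$ with multiplicity $d$, so every $D\in\ls$ is a cone of degree $d$ with vertex $p_0$. The projection from $p_0$, $\pi\colon\PP^n\dashrightarrow\PP^{n-1}$, sends $p_1,\dots,p_s$ to general points $q_1,\dots,q_s$, and a degree-$d$ cone with vertex $p_0$ is the pull-back of a unique degree-$d$ hypersurface of $\PP^{n-1}$; moreover such a cone has multiplicity $m_i$ at $p_i$ if and only if its base has multiplicity $m_i$ at $q_i$. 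This sets up a linear isomorphism $H^0(\ls)\cong H^0(\ls')$, whence $\dim(\ls)=\dim(\ls')$.

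For the equality of linear expected dimensions I would first prove that the linear virtual dimensions coincide. I would split the sum in \eqref{linvirtdim} over the multi-indices $I\subseteq\{0,1,\dots,s\}$ according to whether $0\in I$ or not. For $I=J\subseteq\{1,\dots,s\}$ one has $k_I=k_J$, while for $I=\{0\}\cup J$ the full multiplicity $m_0=d$ gives $k_{\{0\}\cup J}=\max(d+\sum_{j\in J}m_j-|J|d,0)=k_J$. Pairing these two contributions, with signs $(-1)^{|J|}$ and $(-1)^{|J|+1}$ respectively, and using the Pascal identity $\binom{n+a}{n}-\binom{n+a-1}{n}=\binom{(n-1)+a}{n-1}$ with $a=k_J-|J|$, the whole sum collapses to
\[
\sum_{J\subseteq\{1,\dots,s\}}(-1)^{|J|}\binom{(n-1)+k_J-|J|}{n-1}-1,
\]
which is exactly the linear virtual dimension of $\ls'=\ls_{n-1,d}(m_1,\dots,m_s)$. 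In particular the empty index set of $\ls$, contributing $\binom{n+d}{n}$, combines with $I=\{0\}$, contributing $-\binom{n+d-1}{n}$, to reproduce by the same identity the empty-index term $\binom{n-1+d}{n-1}$ of $\ls'$, so no boundary term is lost.

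Finally I would upgrade this identity to the full definition of $\ldim$. The same computation applies verbatim to every pair of ``full-vertex'' super-systems $\mathcal{M}=\ls_{n,d}(d,\nu_1,\dots,\nu_s)\supseteq\ls$ and $\mathcal{M}'=\ls_{n-1,d}(\nu_1,\dots,\nu_s)\supseteq\ls'$, giving equality of their linear virtual dimensions; since these super-systems with a multiplicity-$d$ vertex are in bijection with all super-systems of $\ls'$, the clause in Definition \ref{new-definition} that forces $\ldim=-1$ is triggered for $\ls$ exactly when it is for $\ls'$, and otherwise both linear expected dimensions equal their common linear virtual dimension. I expect the delicate point to be precisely this last step: controlling the ``contained in a linear system of negative linear virtual dimension'' clause, namely showing that a super-system of $\ls$ of negative linear virtual dimension can be replaced by one retaining full multiplicity $d$ at the vertex $p_0$ without increasing its linear virtual dimension. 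The cone–projection bijection and the Pascal collapse are routine; this monotonicity in the vertex multiplicity is the real content to be verified.
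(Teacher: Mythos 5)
Your argument for $\dim(\ls)=\dim(\ls')$ (every divisor of $\ls$ is a cone with vertex at the multiplicity-$d$ point, and projection from that point identifies the two systems multiplicity-for-multiplicity) is exactly the paper's argument. Your Pascal-collapse computation for the equality of the two linear \emph{virtual} dimensions is also correct: pairing $J$ with $\{0\}\cup J$, using $k_{\{0\}\cup J}=k_J$ and the identity $\binom{n+a}{n}-\binom{n+a-1}{n}=\binom{n-1+a}{n-1}$ (valid with the convention that a binomial vanishes when its top entry is smaller than its bottom entry), the sum \eqref{linvirtdim} for $\ls$ collapses term by term to the corresponding sum for $\ls'$. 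This is evidently all the content behind the paper's one-line ``the second equality is easily proved''.

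The genuine gap is the step you yourself flag, and the route you propose for it would in fact fail. You need: if a super-system $\mathcal{M}=\ls_{n,d}(\nu_0,\nu_1,\dots,\nu_s)\supseteq\ls$ has negative linear virtual dimension, then so does some super-system of $\ls'$; and you propose to deduce this from the monotonicity claim that raising $\nu_0$ to $d$ does not increase the linear virtual dimension. That claim is false. Take $n=2$, $d=4$: the system $\ls_{2,4}(1,4,4,4)$ has linear virtual dimension $15-31+18-3+0-1=-2$ (the pairs and triples containing the multiplicity-$1$ point have $k=1$ and contribute $0$; the three pairs and the triple among the multiplicity-$4$ points have $k=4$ and contribute $+6$ and $-3$ each; the full index set has $k=1$ and contributes $0$), whereas $\ls_{2,4}(4,4,4,4)$ has linear virtual dimension $15-40+36-12+1-1=-1$. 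So raising the vertex multiplicity from $1$ to $d=4$ strictly \emph{increases} the linear virtual dimension, and one cannot replace an arbitrary negative super-system by a full-vertex one ``without increasing its linear virtual dimension''. What your argument actually requires is weaker --- that \emph{negativity} of the linear virtual dimension is preserved under the full-vertex replacement (as happens in this example, where $-1<0$) --- or else an argument that, under the non-emptiness hypotheses of the lemma, a super-system with $\nu_0<d$ and negative linear virtual dimension cannot coexist with full-vertex super-systems all of non-negative linear virtual dimension; neither is supplied. In fairness, the paper's own two-line proof is completely silent on the containment clause of Definition \ref{new-definition}, so you have isolated a real subtlety the authors glossed over; but as written, the final step of your proposal rests on a false monotonicity and remains open.
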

\begin{proof}
The first equality is obvious since any divisor in $\ls$ is a cone with vertex in the point of multiplicity $d$.
The second equality is easily proved. 
\end{proof}

\begin{remark} \label{additivity-odim} 
Given $\ls=\ls_{n,d}(m_1,\dots,m_s)$ and $1< t< s$, 
assume that the set $I=\{(i,j):t+1\le i<j\le s, m_i+m_j-d>0\}$ contains at most two pairs 
$(i_1,j_1),(i_2,j_2)$.
Consider
$\ls'=\ls_{n-1,d}(m_1,\ldots,m_t,m_{i_1}+m_{j_1}-d, m_{i_2}+m_{j_2}-d)$ and
$\ls''=\ls_{n,d-1}(m_1-1,\ldots,m_{t}-1,m_{t+1},\ldots,m_s)$.
If $\LL,\LL',\LL''$ are non-empty, then it is easy to prove that
$$\ldim(\ls)=\ldim(\ls')+\ldim(\ls'')+1.$$
\end{remark}

The following theorem is the main result of this section:
\begin{theorem}\label{theorem-n+3}
Given the integers $n\geq1$, $d\geq 2$, $s\geq n+3$, $d\ge m_1\ge \ldots\ge m_s\ge 1$, consider  the linear system $\ls=\ls_{n,d}(m_1,\dots,m_s)$.
Let $s(d)\geq0$ be the number of points of multiplicity $d$, that is $m_{s(d)}= d$ and $m_{s(d)+1}\le d-1$.
Define $$b(\ls):=\min \{n-s(d),s-n-2\}.$$ 
Then we have  $\dim(\ls)=\ldim(\ls)$, 
if the following condition is satisfied:
\begin{equation}
\label{EffectivityCondition}
\sum_{i=1}^s m_i\leq nd+ b(\ls)
\end{equation}
\end{theorem}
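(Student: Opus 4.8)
The plan is to argue by induction on $d$ and, for fixed $d$, on $n$, using the two reduction tools furnished by Lemma \ref{colorful} and Remark \ref{additivity-odim}. The base of the induction is twofold: systems with $s\le n+2$ points are settled unconditionally by Corollary \ref{corollary-monster}, which gives $\dim(\ls)=\ldim(\ls)$, while systems of small degree can be checked directly. Before the inductive step I would normalise the system: whenever $s(d)\ge 1$, i.e.\ there is a point of multiplicity $d$, I would apply the cone reduction of Lemma \ref{colorful} to pass from $\ls_{n,d}(d,m_1,\dots)$ to $\ls_{n-1,d}(m_1,\dots)$. A direct check shows that this step lowers $n$, $s$ and $s(d)$ all by one, hence leaves both $n-s(d)$ and $s-n-2$, and therefore $b(\ls)$, unchanged; it also leaves the excess $\sum_i m_i-nd$ unchanged and preserves $\dim$ and $\ldim$. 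Thus one may assume $s(d)=0$, that is $m_i\le d-1$ for all $i$, and $b(\ls)=\min\{n,\,s-n-2\}$.

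For the inductive step I would degenerate $\ls$ by specialising a general hyperplane $\mathcal H$ to contain the first $t$ (highest multiplicity) points, for a suitable $t\le n$, and exploit the Castelnuovo restriction sequence
\[
0\to \ls''\to \ls\to {\ls}_{|\mathcal H}\to 0,
\]
whose trace ${\ls}_{|\mathcal H}$ is the system $\ls'=\ls_{n-1,d}(m_1,\dots,m_t,\,m_{i_1}+m_{j_1}-d,\,m_{i_2}+m_{j_2}-d)$ in $\mathcal H\cong\PP^{n-1}$ and whose residual is $\ls''=\ls_{n,d-1}(m_1-1,\dots,m_t-1,m_{t+1},\dots,m_s)$. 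Here $(i_1,j_1),(i_2,j_2)$ are the pairs of off-hyperplane points whose spanning line meets $\mathcal H$ in an assigned fat point, and the whole scheme is designed so that Remark \ref{additivity-odim} applies and yields the exact additivity $\ldim(\ls)=\ldim(\ls')+\ldim(\ls'')+1$. On the cohomological side the same sequence gives the Castelnuovo inequality $\dim(\ls)\le\dim(\ls')+\dim(\ls'')+1$ together with additivity of Euler characteristics.

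The arithmetic core of the step is to verify that $\ls'$ and $\ls''$ again fall under the induction. With the natural choice $t=n$ the excess is preserved, since $\sum_i m_i-nd$ equals the excess of $\ls''$ after the degree drops to $d-1$ and the first $n$ multiplicities drop by one. The residual $\ls''$ may however acquire new points of multiplicity $d-1$, namely the tail points that had $m_i=d-1$, which I would eliminate by a further application of Lemma \ref{colorful}; one then computes that the resulting system still has excess $\sum_i m_i-nd$ but a possibly smaller budget, so the hypothesis for $\ls''$ holds exactly when the degeneration is chosen compatibly with the value of $b(\ls)$. The trace $\ls'$ lives in $\PP^{n-1}$ with at most $t+2=(n-1)+3$ assigned points, hence is covered either by Corollary \ref{corollary-monster} or by the inductive hypothesis in dimension $n-1$, once its multiplicities are checked against the analogous bound; non-emptiness of the systems involved is ensured by Lemma \ref{effectivity} and condition \eqref{condition-mult}. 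Feeding the resulting equalities $\dim(\ls')=\ldim(\ls')$ and $\dim(\ls'')=\ldim(\ls'')$ into the two additivity relations yields $\dim(\ls)\le\ldim(\ls)$.

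The hardest part, in my view, is twofold. First, the combinatorial bookkeeping of the degeneration must be made uniform across the two regimes $b=n-s(d)$ and $b=s-n-2$ and across all distributions of the multiplicities, while simultaneously guaranteeing that at most two pairs $(i,j)$ with $m_i+m_j>d$ occur among the off-hyperplane points, as required to invoke Remark \ref{additivity-odim}; controlling this count is exactly where the bound $\sum_i m_i\le nd+b$ must be used most delicately. Second, one must upgrade the Castelnuovo inequality to the equality $\dim(\ls)=\ldim(\ls)$: this amounts to showing that the restriction map $H^0(\ls)\to H^0({\ls}_{|\mathcal H})$ has the expected rank, equivalently that the connecting homomorphism into $H^1(\ls'')$ vanishes. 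I expect this to follow by propagating an $h^1$\Ndash type vanishing for the residual through the induction, which is the genuine geometric content behind the affirmative answer to Question \ref{question} in this range.
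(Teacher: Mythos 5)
Your reduction to $s(d)=0$ via Lemma \ref{colorful}, and your treatment of the inequality $\dim(\ls)\le\ldim(\ls)$ --- specializing points onto a hyperplane, Castelnuovo sequence, $\ldim$-additivity from Remark \ref{additivity-odim}, induction on $(n,d)$ with Corollary \ref{corollary-monster} as base --- is essentially the paper's Lemma \ref{lessorequal}; the ``uniform bookkeeping'' you worry about is real and is handled there by a four-way case analysis (according to whether $m_1+m_2\le d$, whether $b(\ls)=s-n-2<n$, and whether $m_1+m_s\ge d+1$) that dictates which points are specialized so that both trace and residual again satisfy \eqref{EffectivityCondition}.

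The genuine gap is the reverse inequality $\dim(\ls)\ge\ldim(\ls)$, which your final paragraph tries to obtain by showing the connecting map into $H^1(\ls'')$ vanishes and ``propagating an $h^1$-type vanishing'' through the induction. This cannot work with the tools available, for two reasons. First, the relevant $H^1$ on $\PP^n$ does \emph{not} vanish: whenever the residual system has a linear cycle in its base locus with $k_{I(r)}\ge r+1$, it is special in the classical sense, and $\ldim-\vdim$ is precisely the size of that $h^1$; the vanishing statement behind Question \ref{question} (Theorem \ref{monster-theorem}) lives on the blow-up and is only proved for $s\le n+2$, whereas here $s\ge n+3$. Second, and more structurally, your degenerations place more than $n$ points on a hyperplane, i.e.\ they genuinely specialize the configuration, so by semicontinuity any conclusion at the special position bounds $\dim(\ls)$ only from \emph{above}; even exact additivity of $h^0$ there would not transfer to general position. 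Note the inequality $\dim\ge\ldim$ is far from automatic: by Remark \ref{old5.1} it is equivalent to the weak Fr\"oberg--Iarrobino conjecture. The paper proves it under \eqref{EffectivityCondition} (Lemma \ref{greaterorequal}) by a separate induction on $\sum_i m_i$ that stays at general position throughout: Chandler's Lemma \ref{chandler} gives $\dim(\ls)\ge\dim(\ls')-\dim(\ws)-1$, where $\ls'$ has one multiplicity dropped by one and $\ws$ is an auxiliary system in $\PP^{n-1}$, and one concludes by combining $\dim(\ls')\ge\ldim(\ls')$ (induction), $\dim(\ws)\le\ldim(\ws)$ (Corollary \ref{corollary-monster} or the already-proved upper bound --- note the lower bound for $\ls$ feeds on the \emph{upper} bound for $\ws$), and the identity $\ldim(\ls)=\ldim(\ls')-\ldim(\ws)-1$ of Remark \ref{remark-chandler}. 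Some mechanism of this kind, working at general position, is what your proposal is missing.
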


We split the proof of the theorem in the two Lemmas \ref{lessorequal} and \ref{greaterorequal}.

\begin{lemma}\label{lessorequal}
Let $\ls=\ls_{n,d}(m_1,\dots,m_s)$ such that \eqref{EffectivityCondition} is satisfied. Then
$$\dim(\ls)\le \ldim(\ls).$$
\end{lemma}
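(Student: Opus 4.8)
The plan is to bound $\dim(\ls)$ from above by degenerating $\ls$ to a configuration in which a restriction exact sequence splits off two strictly simpler systems, and then to feed the resulting inequality into the additivity formula of Remark \ref{additivity-odim}. The argument is inductive; a convenient measure is $n+d$, since the two auxiliary systems produced below will live on $\PP^{n-1}$ in degree $d$ and on $\PP^{n}$ in degree $d-1$ respectively.

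First I would remove the points of multiplicity $d$. If $s(d)>0$, Lemma \ref{colorful} replaces $\ls=\ls_{n,d}(d,m_1,\dots,m_s)$ by $\ls_{n-1,d}(m_1,\dots,m_s)$ without altering either $\dim$ or $\ldim$, and a direct computation shows that \eqref{EffectivityCondition} and the value of $b$ are unchanged. Iterating, I may assume $s(d)=0$, i.e.\ $m_i\le d-1$ for all $i$.

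For the inductive step I would specialize the points $p_1,\dots,p_t$ of largest multiplicity, for a carefully chosen $t\le n$, onto a general hyperplane $H\cong\PP^{n-1}$, keeping $p_{t+1},\dots,p_s$ general off $H$. Since general position minimises the dimension, upper-semicontinuity gives $\dim(\ls)\le\dim(\ls_{\mathrm{sp}})$ for the specialised system $\ls_{\mathrm{sp}}$. The Castelnuovo restriction sequence
\[
0\to \ls_{\mathrm{sp}}(-H)\to \ls_{\mathrm{sp}}\to {\ls_{\mathrm{sp}}}_{|H}\to 0
\]
then yields $\dim(\ls_{\mathrm{sp}})\le \dim(\ls')+\dim(\ls'')+1$, where, by the Linear Base Locus Lemma \ref{base-locus-lemma}, the residual $\ls_{\mathrm{sp}}(-H)$ is the system $\ls''=\ls_{n,d-1}(m_1-1,\dots,m_t-1,m_{t+1},\dots,m_s)$ and the trace on $H$ is contained in $\ls'=\ls_{n-1,d}(m_1,\dots,m_t,m_{i_1}+m_{j_1}-d,m_{i_2}+m_{j_2}-d)$, its two extra base points being the intersections with $H$ of the base lines $L_{i_1 j_1},L_{i_2 j_2}$ through the off-$H$ pairs with $m_i+m_j>d$. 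Choosing $t\le n$ guarantees that placing $p_1,\dots,p_t$ on $H$ is not a specialisation of the point configuration, so that $\ls'$ and $\ls''$ are again general-position systems and their dimensions are exactly the ones occurring above.

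It then remains to close the induction: both $\ls'$ (on $\PP^{n-1}$) and $\ls''$ (in degree $d-1$) have strictly smaller $n+d$, so by the inductive hypothesis --- or, when $\ls'$ has at most $(n-1)+2$ points, directly by Corollary \ref{corollary-monster} --- we have $\dim(\ls')\le\ldim(\ls')$ and $\dim(\ls'')\le\ldim(\ls'')$. Combining these with the additivity $\ldim(\ls)=\ldim(\ls')+\ldim(\ls'')+1$ of Remark \ref{additivity-odim} gives
\[
\dim(\ls)\le \dim(\ls')+\dim(\ls'')+1\le \ldim(\ls')+\ldim(\ls'')+1=\ldim(\ls),
\]
which is the assertion. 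The main obstacle is the choice of $t$: one has to exhibit a $t\le n$ for which (a) among the off-$H$ points \emph{at most two} pairs $(i,j)$ satisfy $m_i+m_j>d$, so that Remark \ref{additivity-odim} is applicable, and (b) both $\ls'$ and $\ls''$ are non-empty and again obey the effectivity bound \eqref{EffectivityCondition} with their own values of $b$, so that the inductive hypothesis is available. This is exactly where the slack encoded in $b(\ls)=\min\{n-s(d),s-n-2\}$ is spent, and verifying that \eqref{EffectivityCondition} propagates correctly to both auxiliary systems is the technical heart of the argument.
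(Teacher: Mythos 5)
Your overall strategy --- reduce to $s(d)=0$ via Lemma \ref{colorful}, put some points on a hyperplane $H$, bound $\dim(\ls)$ by the Castelnuovo sequence, handle trace and residual by induction, and recombine with Remark \ref{additivity-odim} --- is exactly the skeleton of the paper's proof. But there is a genuine gap precisely at the point you yourself flag as ``the technical heart'': no choice of $t$ is exhibited, no case analysis is carried out, and no verification that \eqref{EffectivityCondition} propagates to $\ls'$ and $\ls''$ is given. Worse, the framework you fix in advance --- specialize the $t$ points of \emph{largest} multiplicity, with $t\le n$ so that no genuine specialization (hence no semicontinuity) is ever needed --- cannot be completed. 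Take $\ls=\ls_{4,11}(6^7)$: here $s=n+3$, $s(d)=0$, $b(\ls)=\min\{4,1\}=1$ and $\sum_i m_i=42\le nd+b(\ls)=45$, so the lemma applies; but every pair of points satisfies $m_i+m_j=12>d$, so any three points left off $H$ produce three violating pairs, and your own condition (a) forces $s-t\le 2$, i.e.\ $t\ge 5>n=4$. There is no admissible $t$ in your scheme.

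The paper resolves exactly this difficulty by a four-case analysis in which the set of specialized points changes from case to case, is not always of size at most $n$, and does not always consist of the largest multiplicities. In case (i) ($m_1+m_2\le d$) it specializes the $n$ largest points, as you propose. But in case (ii) ($m_1+m_2\ge d+1$ and $b(\ls)=s-n-2<n$, which covers the example above) it specializes the $s-2$ \emph{smallest} points --- more than $n$ of them, keeping the two largest off $H$ --- which is a true degeneration of the configuration and is where semicontinuity $\dim(\ls)\le\dim(\ls_{\mathrm{sp}})$ is actually used; and in cases (iii)--(iv) ($b(\ls)=n$) the three points kept off $H$ are $p_\iota,p_{\iota+1},p_s$ or $p_1,p_2,p_s$, where $\iota$ is a threshold index with $m_\iota+m_s\ge d+1>m_{\iota+1}+m_s$. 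Each case then requires its own arithmetic to show the trace and residual either satisfy \eqref{EffectivityCondition} with the correct value of $b$ or fall under Corollary \ref{corollary-monster}; for instance, in case (ii) one checks $b(\ls_H)=b(\ls)$ for the trace and that, after the paper's preliminary reduction to equality in \eqref{EffectivityCondition}, the residual's multiplicities sum exactly to $n(d-1)$. None of this is routine bookkeeping your outline can absorb: the choice of which points go onto $H$ is forced differently in each regime, and your a priori constraint $t\le n$ excludes precisely the configurations that carry the main difficulty.
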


\begin{proof}
Notice that it is enough to prove the statement when equality holds in 
\eqref{EffectivityCondition} for a collection of $s$ general points. 
Indeed any collection $Z$ of $s$ fat points
 which strictly satisfies inequality
 \eqref{EffectivityCondition} 
can be thought as  subscheme of a collection $Z'$ of fat points given by $Z$ and 
a suitable number $s'$ of simple (general) points,
which satisfies equality in \eqref{EffectivityCondition}; moreover if $\ls'\subset \ls$ 
is the linear system formed by the divisors in $\ls$ that pass through these extra simple 
points, then $\dim(\ls')=\dim(\ls)-s'$ and $\ldim(\ls')=\ldim(\ls)-s'$.

We prove the statement by induction on $d$ and $n$
 based on the cases 
$n=1$ and $d=2$, which are easily checked.

If $s(d)\geq1$, then applying Lemma \ref{colorful}  $s(d)$ times we obtain 
$\dim(\ls)=\dim(\ls')$ and $\ldim(\ls)=\ldim(\ls')$, where
$\ls':=\ls_{n-s(d),d}(m_{s(d)+1},\dots,m_s)$.
Notice moroever that $b(\ls')=b(\ls)$, and that from \eqref{EffectivityCondition} 
it immediately follows that 
$$\sum_{i=s(d)+1}^sm_i\le (n-s(d))d+b(\ls'),$$
hence by induction we have $\ldim(\ls')=\dim(\ls')$, so
$\ldim(\ls)=\dim(\ls)$.

Assume  $s(d)=0$. 
For any pair $(n,d)$, we will assume  the statement true for $(n-1,d)$ as well as for $(n,d-1)$ and consider suitable specializations of the points on a hyperplane $H\subset\PP^n$ in such a way that, in the so obtained Castelnuovo exact
sequence
 $$0\to \hat{\ls}\to \ls\to \ls_H\to 0,$$ 
both the restricted system $\ls_H$ and the kernel system $\hat{\ls}$ satisfy the hypotheses of the theorem. 
We split the proof in the following subcases:
\begin{itemize}
\item[(i)] $m_1+m_2\leq d$;
\item[(ii)] $m_1+m_2\ge d+ 1$ and $b(\ls)= s-n-2<n$; 
\item[(iii)] $m_1+m_2\geq d+ 1$, $m_1+m_s\ge d+1$ and $b(\ls)=n$;
\item[(iv)] $m_1+m_2\geq d+1$, $m_1+m_s\le d$  and $b(\ls)=n$. 
\end{itemize} 

\smallskip

Case (i).
Consider the hyperplane $H$ through the first $n$ points 
and notice that the trace $\ls_H$ satisfies
$\dim(\ls_H)\le \dim( \ls_{n-1,d}(m_1,\dots, m_n))$.
From the assumption it follows that
$$\sum_{i<j, i,j=1}^n (m_i+m_j-d)=(n-1)\sum_{i=1}^n m_i-{n\choose2}d\leq0.$$
Therefore, since $n\geq 2$, we have 
$\sum_{i=1}^n m_i\leq \frac{n}{2}d\leq (n-1)d$, so the linear system is not empty by 
Lemma \ref{effectivity}.
Hence by Corollary \ref{corollary-monster} 
we have $\dim(\ls_{n-1,d}(m_1,\dots, m_n))=\ldim(\ls_{n-1,d}(m_1,\dots, m_n))$. 
The residual is $\hat{\ls}=\ls_{n,d-1}(m_1-1,\cdots,m_n-1,m_{n+1}\dots,m_s)$. 
Since $d\ge 3$ and $m_1+m_2-d\leq0$ we have that there are no points of multiplicity $d-1$ in $\hat{\ls}$. 
If $m_n\geq 2$, then it is based on $s$ points, and we have $b(\hat{\ls})= b(\ls)$. 
Otherwise, let
 $l\in\{1,\dots,n\}$ be the integer such that $m_{n-l+1}=\cdots=m_s=1$.  
As $b(\ls)=\sum_{i=1}^sm_i-nd\le (n-l)(d-1)+(s-n+l)-nd=s-2n-(d-2)l
\le s-n-2-l$, then we have $b(\hat{\ls})=b(\ls)=n$.
In both cases from \eqref{EffectivityCondition} it follows that
$$\sum_{i=1}^n (m_i-1)+\sum_{i=n+1}^{s}{m_i}= n(d-1)+b(\hat\ls),$$ 
hence by induction we obtain $\dim(\hat{\ls})=\ldim(\hat{\ls})$.
We conclude $\dim(\ls)\leq \dim(\ls_H)+\dim(\hat\ls)+1 \leq 
\ldim(\ls_{d}^{n-1}(m_1,\dots, m_n))+\ldim(\hat\ls)+1=\ldim(\ls)$
where the last equality follows from Remark \ref{additivity-odim}.

\smallskip 

Case (ii). 
In this case we specialize the last $s-2$ points on a general hyperplane $H$.
The trace is $\ls_H=\ls_{n-1,d}(m_1+m_2-d,m_3,\dots,m_s)$.
From the assumption it follows
$b(\ls_H)=\min(n-1,(s-1)-(n-1)-2)=b(\ls)$
and so we have
$$(m_1+m_2-d)+\sum_{i=3}^s m_i \le (n-1)d+b(\ls_H),$$
and we get $\dim(\ls_H)=\ldim(\ls_H)$ by induction.  
The residual is $\hat{\ls}=\ls_{n,d-1}(m_1,m_2,m_3-1,\dots,m_s-1)$.
From \eqref{EffectivityCondition} we have
$$m_1+m_2+\sum_{i=3}^s (m_i-1)= nd+b(\ls)-(s-2)=n(d-1).$$
Let $\hat s$ the number of points where $\hat\ls$ is supported.
We conclude that $\dim(\hat{\ls})=\ldim(\hat{\ls})$, using 
Corollary \ref{corollary-monster}
if $\hat s\le n+2$, and using induction if $\hat s\ge n+3$.
As in the previous case, by Remark \ref{additivity-odim}, we conclude 
$\dim(\ls)\leq \dim(\ls_H)+\dim(\hat\ls)+1 =
\ldim(\ls_H)+\ldim(\hat\ls)+1=\ldim(\ls).$

\smallskip

Case (iii).
Notice that $m_{s-1}+m_s\leq d$. 
Indeed if $m_i+m_j\geq d+1$ for all $1\le i,j\le s$, then 
$(s-1)\sum_{i=1}^s m_i\geq {s\choose 2}(d+1)$, so that 
$\sum_{i=1}^s m_i\ge \frac{s(d+1)}2 \ge (n+1)(d+1)> nd+n$,
 and this leads to a contradiction with (\ref{EffectivityCondition}).

Since $m_1+m_s-d\ge 1$, 
then there exists an integer  $\iota\in\{1,\dots,s-2\}$ such that
$m_\iota+m_s\geq d+1$, and $ m_{\iota+1}+m_s\leq d.$
Let us specialize all points but $m_{\iota},m_{\iota+1},m_{s}$ on a hyperplane $H$.
Since $m_{\iota}+m_{\iota+1}\ge m_{\iota}+m_{s}\ge d+1$, it follows that
the trace is
$\ls_H=\ls_{d}^{n-1}(m_1,\dots,m_{\iota-1},m_{\iota+2}\dots,m_{s-1},m_{\iota}+m_{\iota+1}-d, m_{\iota}+m_{s}-d)$
and $b(\ls_H)=\min (n-1, (s-1)-(n-1)-2)=n-1$.
Since $m_\iota<d$ and $b(\ls_H)=n-1$
then
$$\sum_{i=1}^s m_i+m_\iota-2d\le (n-1)d+n+(m_\iota-d)\le (n-1)d+b(\ls_H),$$
and thus we have $\dim(\ls_H)=\ldim(\ls_H)$ by induction.  
The residual linear system is 
$\hat{\ls}=\ls_{n,d-1}(m_1-1,\ldots,m_{\iota-1}-1,m_\iota,m_{\iota+1},m_{\iota+2}-1,\dots,m_{s-1}-1,m_s).$
Note that from the assumptions $m_\iota+m_s\ge d+1$ and $s(d)=0$ 
we have $m_s\ge2$, hence $\hat\ls$ is supported at $s$ points and $b(\hat\ls)\ge1$. 
It easily follows, being $s\ge 2n+2$ by assumption
that
$$\sum_{i=1}^{s} m_i-(s-3)= nd+n-s+3\le n(d-1)+1\le n(d-1)+b(\hat\ls),$$
 and we conclude that $\dim(\hat{\ls})=\ldim(\hat{\ls})$  by induction. 
We conclude, like in the previous cases, that $\dim(\ls)\le\ldim(\ls)$.

\smallskip

Case (iv). 
We specialize all points but $m_1,m_2,m_{s}$ on a hyperplane $H$.
The trace is
$\ls_H=\ls_{d}^{n-1}(m_1+m_2-d, m_3,\dots,m_{s-1})$
and $b(\ls_H)=\min (n-1, (s-2)-(n-1)-2)=b(\ls)-1=n-1$.
Since
$$m_1+m_2-d+\sum_{i=3}^{s-1} m_i\le nd+b(\ls)-m_s-d \le (n-1)d+b(\ls_H),$$
we have $\dim(\ls_H)=\ldim(\ls_H)$ by induction.  
The residual linear system is 
$\hat{\ls}=\ls_{n,d-1}(m_1,m_2,m_3-1,\ldots,m_{s-1}-1,m_s)$, 
and, as in the previous case
$\sum_{i=1}^{s} m_i-(s-3)\le  n(d-1)+1.$
Let $\hat s$ the number of points where $\hat\ls$ is supported. 
If $\hat s\ge n+3$, then clearly $b(\hat \ls)\ge 1$ and we conclude by induction.
If $\hat{s}\le n+2$ then it has to be $m_{s-1}=1$. Therefore the sum 
of the multiplicities of the points in $\hat{\ls}$ is
$\sum_{i=1}^{s-2}m_i-(s-2)+m_s\le \sum_{i=1}^s m_i-(s-2)\le n(d-1)$,
 being $s\ge 2n+2$.
Hence it is non-empty by Lemma \ref{effectivity} and we conclude by Corollary \ref{corollary-monster}.
Finally we conclude, as in the previous cases, that $\dim(\ls)\le\ldim(\ls)$.
\end{proof}

\begin{lemma}\cite[Lemma 6.3]{Chandler}\label{chandler}
Let $k\ge 1$, $d-1\ge m_1\ge \ldots\ge m_s\ge 1$ and consider the linear systems
$\ls=\ls_{n,d}(k,m_1,\dots,m_s)$ and
$\ls'=\ls_{n,d}(k-1,m_1,\dots,m_s)$.
Denote $c_i=\max \{k+m_i-d-1,0\}$ and $\overline{s}=\max \{1\le i\le s: c_i>0\}$
 and
$\ws=\ls_{n-1,k-1}(c_1,\dots,c_{\overline{s}})$. 
Then we have
\[
\dim(\ls)\ge \dim(\ls')-\dim(\ws)-1
\]
\end{lemma}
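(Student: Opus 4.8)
The plan is to compare the two systems by blowing up the single point whose multiplicity changes from $k$ to $k-1$ and restricting to the resulting exceptional divisor. Write $P_0$ for the point carrying multiplicity $k$ in $\ls$ and $k-1$ in $\ls'$, and $P_1,\dots,P_s$ for the general points of multiplicities $m_1,\dots,m_s$. First I would blow up $P_0$ to obtain $\pi\colon\widetilde{X}\to\PP^n$ with exceptional divisor $E\cong\PP^{n-1}$, and let $\widetilde{D}'$ denote the strict transform of a general divisor of $\ls'$, of class $\pi^\ast(dH)-(k-1)E$ subject to the fat point conditions at $P_1,\dots,P_s$. Intersecting the restriction exact sequence
\[
0\to \OO_{\widetilde X}(\widetilde{D}'-E)\to \OO_{\widetilde X}(\widetilde{D}')\to \OO_E(\widetilde{D}'|_E)\to 0
\]
with the subspaces of sections imposed by the multiplicities at $P_1,\dots,P_s$ identifies the global sections of the kernel $\OO_{\widetilde X}(\widetilde{D}'-E)=\pi^\ast(dH)-kE$ with $H^0(\ls)$, and those of the middle term with $H^0(\ls')$, so that the induced map on sections is left exact with kernel $H^0(\ls)$.

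The heart of the argument is the identification of the restricted system on $E$. Since $H|_E=0$ and $E|_E=\OO_E(-1)$, one computes $\widetilde{D}'|_E\cong\OO_E(k-1)$, so the restriction map takes values in $H^0(\PP^{n-1},\OO(k-1))$; concretely it sends a divisor to the degree-$(k-1)$ leading term of its Taylor expansion at $P_0$. I would then show that this image is forced to vanish at the points $q_i\in E$ recording the tangent directions of the lines $\overline{P_0P_i}$. By the Linear Base Locus Lemma \ref{base-locus-lemma} applied to the index set $\{P_0,P_i\}$, the line $\overline{P_0P_i}$ lies in the base locus of $\ls'$ with multiplicity at least $(k-1)+m_i-d=c_i$ whenever $c_i>0$; a local computation in coordinates adapted to $\overline{P_0P_i}$ then shows that each homogeneous Taylor component of a section, in particular the leading term, lies in the ideal $(x_2,\dots,x_n)^{c_i}$ along the line, hence vanishes to order at least $c_i$ at $q_i$. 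Because the $P_i$ are general, the $q_i$ are general points of $E\cong\PP^{n-1}$, and the image is contained in $H^0(\ws)$ with $\ws=\ls_{n-1,k-1}(c_1,\dots,c_{\overline{s}})$.

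Combining these, the restriction map fits into the left-exact sequence $0\to H^0(\ls)\to H^0(\ls')\to H^0(\ws)$, whence $h^0(\ls')-h^0(\ls)\le h^0(\ws)$. Translating into projective dimensions via $h^0=\dim+1$ (with the usual convention $\dim=-1$, $h^0=0$ for an empty system) gives $\dim(\ls')-\dim(\ls)\le \dim(\ws)+1$, which is exactly $\dim(\ls)\ge \dim(\ls')-\dim(\ws)-1$.

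I expect the main obstacle to be the precise justification of the claim in the second paragraph: that the order of vanishing at $q_i$ of the leading term on $E$ is bounded below by the multiplicity with which $\overline{P_0P_i}$ sits in the base locus. This is a purely local statement about jets under blow-up that must be checked either in adapted coordinates or by a secondary blow-up of the line $\overline{P_0P_i}$ and a comparison of exceptional multiplicities; the remaining cohomological bookkeeping and the final dimension count are routine.
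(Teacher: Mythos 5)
The paper itself offers no proof of this lemma: it is imported verbatim from Chandler's paper (the citation of \cite[Lemma 6.3]{Chandler} stands in place of a proof), so there is no in-paper argument to compare against. Your proof is correct, and it is in substance the projection/Taylor-expansion argument underlying Chandler's original lemma, recast on the blow-up at $P_0$. The two points on which it genuinely hinges both hold. First, the jet claim: if a divisor of $\ls'$ contains $L_i=\overline{P_0P_i}$ with multiplicity $c_i$, then in affine coordinates at $P_0$ with $L_i$ the $x_1$-axis its equation $F$ lies in the symbolic power $I_{L_i}^{(c_i)}$, which equals the ordinary power $(x_2,\dots,x_n)^{c_i}$ because the ideal of a line is generated by a regular sequence of linear forms; since that ideal is homogeneous, every graded Taylor component of $F$, in particular the leading form $F_{k-1}$, lies in it, and this is precisely vanishing to order $c_i$ at $q_i=[1:0:\dots:0]\in E$. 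Second, the genericity of the points $q_i$ is essential rather than cosmetic: the final bound needs the space of degree-$(k-1)$ forms with multiplicity $c_i$ at the actual points $q_i$ to have dimension exactly $h^0(\ws)$ (for special points it could be strictly larger, and the inequality would be lost); this holds because projection from a general point carries general points to general points, as you note. Granting these, the left-exact sequence $0\to H^0(\ls)\to H^0(\ls')\to H^0(\ws)$ together with the convention $h^0=\dim+1$ (valid also for empty systems; the degenerate case $\ls'=\emptyset$ is trivial) gives $\dim(\ls)\ge\dim(\ls')-\dim(\ws)-1$, as required.
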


\begin{remark}\label{remark-chandler}
With the notation of the previous lemma it is easy to check that if $\LL,\LL', \ws$ are non-empty, then
$$\ldim(\ls)=\ldim(\ls')-\ldim(\ws)-1.$$
\end{remark}

\begin{lemma}\label{greaterorequal} 
Let $\ls=\ls_{n,d}(m_1,\dots,m_s)$ such that \eqref{EffectivityCondition} is satisfied. Then
$$\dim(\ls)\ge \ldim(\ls).$$
\end{lemma}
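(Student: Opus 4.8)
The plan is to prove the reverse inequality by the same degeneration technique, but now reading Chandler's estimate (Lemma \ref{chandler}) in the direction it naturally supplies --- as a \emph{lower} bound for $\dim(\ls)$ --- and matching it term by term against the recursion for $\ldim$ recorded in Remark \ref{remark-chandler}. Since the multiplicities are already ordered $d\ge m_1\ge\cdots\ge m_s\ge1$, I would set $k=m_1$ and form the two auxiliary systems $\ls'=\ls_{n,d}(m_1-1,m_2,\dots,m_s)$ and $\ws=\ls_{n-1,k-1}(c_1,\dots,c_{\overline s})$ of Lemma \ref{chandler}. Because lowering a multiplicity enlarges the system, $\ls\subseteq\ls'$, so $\ls'$ inherits non-emptiness from $\ls$; one then checks that $\ls'$ and $\ws$ again satisfy \eqref{EffectivityCondition} (or fall into the range $s\le n+2$ covered by Corollary \ref{corollary-monster}).

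Granting non-emptiness of $\ls,\ls',\ws$, Lemma \ref{chandler} gives $\dim(\ls)\ge\dim(\ls')-\dim(\ws)-1$, while Remark \ref{remark-chandler} gives the exact identity $\ldim(\ls)=\ldim(\ls')-\ldim(\ws)-1$. Subtracting,
\[
\dim(\ls)-\ldim(\ls)\ge\big(\dim(\ls')-\ldim(\ls')\big)-\big(\dim(\ws)-\ldim(\ws)\big),
\]
so it suffices to produce the lower bound $\dim(\ls')\ge\ldim(\ls')$ and the upper bound $\dim(\ws)\le\ldim(\ws)$. The first would come from the inductive hypothesis applied to $\ls'$; the second is precisely the already-established Lemma \ref{lessorequal} applied to $\ws$ (note $c_i\le k-1$ for every $i$, so $\ws$ has the right shape). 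This pairing --- the inductive lower bound for the ``smaller'' system $\ls'$ against the proven upper bound for the auxiliary system $\ws$ --- is the conceptual core of the argument, and it is exactly why Lemma \ref{lessorequal} must be proved first.

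For the inductive scaffolding I would mirror the proof of Lemma \ref{lessorequal}: induct on $n$ and $d$, with base cases $n=1$ (every system on $\PP^1$ is non-special) and small $d$ verified directly, and reduce to the case of equality in \eqref{EffectivityCondition} by adjoining general simple points exactly as there. When $s(d)\ge1$ I would first peel off a point of multiplicity $d$ using Lemma \ref{colorful}, which preserves both $\dim$ and $\ldim$ and lowers $n$, after checking that $b(\cdot)$ and \eqref{EffectivityCondition} are preserved. In the remaining case $s(d)=0$ Chandler's lemma enters as above; here $\ws$ has degree $k-1<d$ and so is reached by the induction on degree, whereas $\ls'$ shares the degree $d$ but has strictly smaller total multiplicity $\sum_i m_i$, so a secondary induction on $\sum_i m_i$ at fixed $(n,d)$ delivers $\dim(\ls')\ge\ldim(\ls')$.

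The step I expect to be the genuine obstacle is the combinatorial bookkeeping: verifying that \eqref{EffectivityCondition} (and the non-emptiness needed to invoke Remark \ref{remark-chandler}) really does propagate to $\ls'$ and $\ws$ through every sub-case, together with the separate treatment of the degenerate situations in which $\ws$ is empty and Remark \ref{remark-chandler} must be replaced by a direct comparison of the two recursions. Once these are controlled, the term-by-term matching of the $\dim$ and $\ldim$ recursions closes the induction and yields $\dim(\ls)\ge\ldim(\ls)$.
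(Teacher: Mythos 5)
Your proposal follows essentially the same route as the paper's proof: the same auxiliary systems $\ls'=\ls_{n,d}(m_1-1,m_2,\dots,m_s)$ and $\ws$ from Lemma \ref{chandler}, the same pairing of the inductive lower bound $\dim(\ls')\ge\ldim(\ls')$ against the upper bound $\dim(\ws)\le\ldim(\ws)$ supplied by Lemma \ref{lessorequal} (or Corollary \ref{corollary-monster} for few points), combined through Remark \ref{remark-chandler}, after reducing to $s(d)=0$ via Lemma \ref{colorful}. The only differences are organizational: the paper runs a single induction on $\sum_{i} m_i$ (base case all multiplicities equal to $1$, where $\ldim=\vdim$) rather than your heavier induction on $n$, $d$ and $\sum_i m_i$, and it settles the ``combinatorial bookkeeping'' you flag by proving $\sum_i c_i\le (n-1)(m_1-1)$ directly from \eqref{EffectivityCondition}, which is what legitimizes applying Lemma \ref{lessorequal} to $\ws$.
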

\begin{proof}
We may assume $s(d)=0$, thanks to Lemma \ref{colorful}.

The proof is by induction on $\sum_{i=1}^s m_i$, based on the case
 $\sum_{i=1}^s m_i=s$ for which the statement trivially holds being $\ldim(\ls)=\vdim(\ls)$.

If $\sum_{i=1}^s m_i>s$, by induction we assume
that the statement holds for any subscheme 
strictly contained in $\ls$.
Consider the linear systems  
\[\ls':=\ls_{n,d}(m_1-1,m_2,\dots,m_s) \ \textrm{ and }  \ \ws:=\ls_{n-1,m_1-1}(c_1,\dots,c_{\overline{s}}),\]
where $c_{i}:=\max\{k_{1,i+1}-1,0\}$, for $2\le i \le s$  
and $\overline{s}:=\max \{2\le i\le s: c_i>0\}-1$. 
Clearly $\ls'$ satisfies condition \eqref{EffectivityCondition} and by induction we have $\dim(\ls')\ge\ldim(\ls')$.
We claim that 
\[
\sum_{i=1}^{\bar{s}}c_i\le (n-1)(m_1-1).
\]
It is easily verified when $\bar{s}\le n-1$, while
if $\bar{s}\ge n$, since
\begin{align*}
\sum_{i=1}^{\bar{s}}c_i&=\sum_{i=2}^{\bar{s}+1}(m_1+m_i-d-1)=\sum_{i=2}^{\bar{s}+1} m_i+\bar{s}(m_1-d-1)\\
&\le\sum_{i=1}^sm_i-m_1-(s-\bar{s}-1)+\bar{s}(m_1-d-1),
\end{align*}
then from condition \eqref{EffectivityCondition} and the fact that $b(\ls)\le s-n$ we get
\begin{align*}
\sum_{i=1}^{\bar{s}}c_i&\le nd+b(\ls)-m_1-(s-1)+\bar{s}(m_1-d)\\
&\le(n-1)(m_1-1).
\end{align*}
Now, if $\bar{s}\le n+2$ we already proved in Corollary \ref{corollary-monster}
that $\dim(\ws)=\ldim(\ws)$; if $\bar{s}\ge n+3$ 
then by Lemma \ref{lessorequal} we have that $\dim(\ws)\le \ldim(\ws)$.
Finally, by Lemma \ref{chandler} and  Remark \ref{remark-chandler} we have
$$\dim(\ls)\ge \dim(\ls')-\dim(\ws)-1\ge \ldim(\ls')-\ldim(\ws)-1= \ldim(\ls).$$
\end{proof}

\section{Concluding remarks and future directions}
\label{final-section}

\subsection{The Fr\"oberg-Iarrobino conjecture and Chandler results}
\label{section6.1}
The results of this paper are connected with the Fr\"oberg-Iarrobino conjecture, which
is the geometrical version of an important conjecture formulated by Fr\"oberg
in the commutative algebra setting. 

More precisely, let $R=\CC[x_0,\ldots,x_n]$ be a polynomial ring in $n+1$ variables 
over $\CC$ and $I=(f_1,\ldots,f_s)$ be an ideal generated by $s$ general forms of degrees $m_1,\ldots,m_s$. In \cite{Froberg}, 
Fr\"oberg conjectures that the Hilbert series of the quotient ring $R/I$ is
\begin{equation}\label{froberg-formula}
\HHilb_{R/I}(t)
=\left[\frac{\Pi_{i=1}^{s}(1-t^{m_i})}{(1-t)^{n+1}}\right]
\end{equation}
where we use the notation: 
$[\sum a_it^i]=\sum b_it^i$ with $b_i=a_i$ if $a_j>0$ for all $j\le i$, and $b_i=0$ otherwise.

The Fr\"oberg conjecture has been proved to be true for $n=1$ \cite{Froberg} and $n=2$ \cite{Anick}.  
Moreover, it is easily verified 
when the number of generator is $s\le n+1$, and in
the case $s=n+2$ has been proved by Stanley (see \cite[Example 2]{Froberg}). 
The conjecture is known to be true in some other special cases but it is still open in general.

The Fr\"oberg-Iarrobino conjecture concerns the case when every form $f_i=(l_i)^{m_i}$
is a power of a general linear form $l_i$ (see \cite{Chandler, Iarrobino} for more details).
The Fr\"oberg-Iarrobino conjecture implies the Fr\"oberg conjecture,
but they are not equivalent, because a 
power of a general linear form is not a general form.

More precisely, the Fr\"oberg-Iarrobino conjecture deals with the homogeneous case, i.e. $m_i=m$, $i=1,\dots,s$,
and states that formula \eqref{froberg-formula} 
holds except for a given list of exceptions (see \cite[Conjecture 4.8]{Chandler}).
It is natural to generalize such a conjecture to the non-homogeneous case, namely when the $m_i$'s 
are different, as Chandler pointed out. 
Since in the case of powers of linear forms, the ideal $I$ can be seen as the ideal of a collection of fat points,
it is possible to give a geometric interpretation of such a conjecture in terms of our Definition \ref{new-definition}, 
namely a linear system is always linearly 
non-special but in a list of exceptions.

In \cite[Proposition 9.1]{Chandler} it is proved that the the generalized Fr\"oberg-Iarrobino conjecture 
is true if either $s\le n+1$ or $\sum_{i=1}^sm_i\le dn+1$.
Our Corollary \ref{corollary-monster}
and Theorem \ref{theorem-n+3} improve Chandler's result and show that the generalized 
 Fr\"oberg-Iarrobino conjecture holds true if
either $s\le n+2$ or condition \eqref{EffectivityCondition} is satisfied.

As already mentioned in Remark \ref{old5.1} there exists a weak version of the 
Fr\"oberg-Iarrobino conjecture, see \cite[Conjecture 4.5]{Chandler}, which states 
that for any linear system $\LL$ the inequality $\ldim(\LL)\le \dim(\LL)$ is verified.
The weak Fr\"oberg-Iarrobino conjecture for the case of $\PP^n$,  $n\le 3$, is
established in \cite[Theorem 1.2]{Chandler}.
Moreover in Lemma \ref{greaterorequal} we prove that such a conjecture holds true for any 
$n$ and arbitrary number of points if condition \eqref{EffectivityCondition} is satisfied.

\subsection{The Laface-Ugaglia conjecture and future directions}\label{LU conjecture}
\label{section6.2}

In view of extending the
well-known Segre-Harbourne-Gimigliano-Hirschowitz conjecture to $\PP^3$,
Laface and Ugaglia,  in 
\cite[Conjecture 4.1]{laface-ugaglia-TAMS} and
\cite[Conjecture 6.3]{laface-ugaglia-standard},
formulated the following conjecture.

\begin{conjecture}[Laface-Ugaglia]\label{LU}
If $\ls=\ls_{3,d}(m_1,\dots,m_s)$ is Cremona reduced, i.e. $2d\geq m_{i_1}+m_{i_2}+m_{i_3}+m_{i_4}$, for any $\{i_1,i_2,i_3,i_4\}\subseteq\{1,\dots, s\}$, 
then $\ls$ is special if and only if one of the following holds:
\begin{enumerate}
\item there exists a line $L=\langle p_i, p_j\rangle$, for some $i,j\in\{1,\dots,s\}$ such that $\LL \cdot L\leq -2$;
\item there exists a quadric $Q = \LL_{3,2}(1^9)$ such that $Q\cdot (\LL-Q)\cdot(\LL-K_{\PP^3}) < 0$.
\end{enumerate}
\end{conjecture}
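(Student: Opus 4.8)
The plan is to treat the conjecture as two separate implications on a suitable blow-up of $\PP^3$ along the base locus of $\ls$, combining the linear-speciality machinery of Sections \ref{base-locus-section}--\ref{toric-case-section} with a parallel analysis of the single \emph{non-linear} base component appearing in $(2)$, namely the special quadric $Q$. I should stress at the outset that this is a genuinely open conjecture, so what follows is a research program indicating where our tools suffice and where they do not, rather than a complete argument. I would split it as: \emph{Direction A}, that $(1)$ or $(2)$ forces speciality; and \emph{Direction B}, that a Cremona reduced system with neither a line as in $(1)$ nor a quadric as in $(2)$ is non-special.

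Direction A is largely within reach for the line case. If $L=\langle p_i,p_j\rangle$ satisfies $\ls\cdot L\le -2$, then $d-m_i-m_j\le -2$, that is $k_{ij}=m_i+m_j-d\ge 2$, so by the Linear Base Locus Lemma \ref{base-locus-lemma} the line $L$ lies in the base locus with multiplicity at least $k_{ij}$. The quantity $\binom{n+k_{I(1)}-2}{n}$ subtracted in Theorem \ref{monster-theorem}(i) then becomes, for $n=3$,
\[
\binom{3+k_{ij}-2}{3}=\binom{k_{ij}+1}{3}\ge 1,
\]
so each such line contributes positively to $h^1(\PP^3,\ls)=h^1(D_{(0)})$. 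Since for a Cremona reduced system the planes in the base locus split off without affecting cohomology (Remark \ref{hyperplanes in base locus}) and isolated points contribute nothing in this range, the first task is to confirm that these positive line contributions genuinely survive in the total $h^1$. For case $(2)$ one would instead exhibit $Q$ as a fixed surface component, split it off with the correct multiplicity, and read the obstruction $Q\cdot(\ls-Q)\cdot(\ls-K_{\PP^3})$ off the residual system; this mirrors the cone computation of Proposition \ref{cones-theorem}, but for a quadric surface rather than a linear cycle.

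Direction B is the hard part, and the place where our methods stop short. The natural strategy is a double induction on $(d,\sum m_i)$ by Castelnuovo restriction to a plane $H\subset\PP^3$, exactly as in Lemmas \ref{lessorequal} and \ref{greaterorequal}, choosing at each step a specialization of the points so that both the trace and the kernel system satisfy the same hypotheses. The decisive obstacle is twofold. First, one must ensure that Cremona reducedness and the absence of the obstructions $(1)$--$(2)$ are preserved under these restrictions and under the cubo-cubic Cremona transformations; this is delicate because passing to the residual can create new short lines or force the points onto a quadric. Second, and more seriously, the special quadric $Q$ is \emph{not} a linear cycle, so the blow-up and normal-bundle computations of Section \ref{toric-case-section}, on which Corollary \ref{corollary-monster} and Theorem \ref{theorem-n+3} rest, do not apply to it verbatim.

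To make Direction B tractable one would therefore need to extend the framework of Section \ref{toric-case-section} from linear cycles to the quadric surface: an analogue of Lemma \ref{lemma-EdotE} computing the normal bundle of the strict transform of $Q$ in the successive blow-ups, and an analogue of Proposition \ref{toric implies normal bundle} describing how the $h^i$ change when $Q$ is blown up. Granting such a quadric counterpart, the induction would close by combining it with our linear results and with Chandler's reduction Lemma \ref{chandler} together with Remark \ref{remark-chandler}, which control the effect of lowering a single multiplicity. I expect the genuinely new input to be precisely this non-linear normal-bundle computation, since everything purely linear is already supplied by the present paper; the remaining difficulty is then the bookkeeping of the induction, of the same flavour as the four cases treated in Lemma \ref{lessorequal}.
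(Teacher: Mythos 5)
There is nothing in the paper to compare your attempt against: the statement you were given is Conjecture \ref{LU}, the Laface--Ugaglia conjecture, which the paper quotes from \cite{laface-ugaglia-TAMS,laface-ugaglia-standard} and does \emph{not} prove --- it is open, verified only for $s\le 8$ points \cite{volder-laface} and for multiplicities at most $5$ \cite{ballico-brambilla,dumnicki-Iagel,ballico-brambilla-caruso-sala}. Your decision to present a research program rather than a purported proof is therefore the correct one, and your program is well calibrated to what the paper actually delivers: the regimes in which your induction could close are exactly those covered by Corollary \ref{corollary-monster} ($s\le n+2$) and Theorem \ref{theorem-n+3} (condition \eqref{EffectivityCondition}), which is how the authors themselves phrase their contribution to the conjecture in Section \ref{section6.2}; and the obstruction you isolate --- that $Q=\LL_{3,2}(1^9)$ is not a linear cycle, so Lemma \ref{lemma-EdotE} and Proposition \ref{toric implies normal bundle} have no analogue for it --- is precisely the one the authors flag as not understood (``it does not seem very clear how to quantify such contribution'').

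Two corrections to your Direction A, which is the only part you claim is ``largely within reach''. First, a positive correction term $\binom{k_{ij}+1}{3}$ in the linear virtual dimension does not by itself yield $h^1>0$: you need the inequality $\dim(\ls)\ge\ldim(\ls)$ (the weak Fr\"oberg--Iarrobino conjecture), which in $\PP^3$ is Chandler's theorem \cite[Theorem 1.2]{Chandler}, cited but not reproved in this paper; you should invoke it explicitly, and also check that the positive line corrections are not cancelled by negative plane corrections --- here Cremona reducedness saves you, since for $s\ge 4$ it forces $k_{I(2)}=m_i+m_j+m_k-2d\le -m_l<0$, so no plane lies in the base locus. Second, speciality in this paper means $h^0\cdot h^1\neq 0$, so you also need non-emptiness, which neither Cremona reducedness nor condition (1) guarantees: for instance $\LL_{3,4}(3,3,1^{30})$ is Cremona reduced, satisfies (1) with $k_{12}=2$, yet is empty and hence non-special. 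So the ``if'' direction is only true (and only provable by your argument) with an effectivity hypothesis added or handled; this edge case is implicit in the original formulation of Laface--Ugaglia but invisible in the statement as reproduced here. With these points made explicit, your line case of Direction A is complete modulo Chandler's theorem; the quadric case of Direction A and all of Direction B remain, as you say, open.
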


The Laface-Ugaglia conjecture is known to be true 
when the number of points is at most $8$ \cite{volder-laface}, and
when the points are at most quartuple \cite{ballico-brambilla,dumnicki-Iagel}
or quintuple \cite{ballico-brambilla-caruso-sala}.

We remark that this conjecture can be reformulated, according to our definition, saying 
that a Cremona reduced linear system  in $\PP^3$ either is 
linearly non-special, 
or contains in its base locus a quadric surface which gives speciality.

Our Corollary \ref{corollary-monster} proves that if the points are $s\le n+2$, then such a conjecture
holds true also in $\PP^n$ for any $n$,
i.e. a non-empty linear system (not necessarily Cremona reduced) with at most $n+2$ points
is special if and only if $\ls \cdot L\le -2$ for some line $L=\langle p_i, p_j\rangle$.
Moreover we prove that, in this case, $\dim(\ls)=\ldim(\ls)>\edim(\ls)$. 

When the points are $s\ge n+3$, from Theorem \ref{theorem-n+3} it follows
that the same is true under the assumption \eqref{EffectivityCondition}.
Such an assumption is, in particular, a sufficient condition 
for the base locus to contain no multiple rational normal curves. 
In fact, we expect that when multiple rational normal curves appear in the base locus, they 
give a contribution to the speciality of the system.

We point out that a Cremona reduced linear system  in $\PP^3$ does not contain
rational normal curves
in its base locus, but this fact is no longer true in $\PP^4$.
Indeed consider the following example:
\begin{example}\label{example-easy}
Set $\LL=\LL_{4,10}(6^7)$.
Then $\LL$ is Cremona reduced, but its
base locus contains the double rational curve through the seven base points.
On can easily see that $\LL$ is linearly special, since
$\dim(\LL)=140$,
while
$$\ldim(\LL)=\binom{14}4-7\binom94+21-1=139.$$
It seems natural then to think that the double rational normal curve gives exactly a contribution of $1$, 
similarly to a double line.
\end{example}

The following example 
suggests that only the contribution of rational normal curves 
is important, even when there are other multiple curves in the base locus.
\begin{example}\label{example-giorgio} 
Consider the linear system $\LL=\LL_{5,6}(4^9)$.
It has dimension $\dim(\LL)=2$. Indeed there is a normal elliptic curve $C$ 
of degree $6$ through the $9$ points. The secant variety $\sigma_2(C)$ is a threefold 
cut out by two cubics hypersurfaces $\Sigma_1$ and $\Sigma_2$, defined by the equations $F_1=0$ and $F_2=0$. 
Then the equations $F_1^2=0$, $F_1F_2=0$, $F_2^2=0$ define 
three independent hypersurfaces of degree $6$ which have multiplicity $4$ along $C$ and which generate $\HH^0(\LL)$.

On the other hand, taking into account the contributions of the $36$ double lines in the base locus, one computes
$$\ldim(\LL)=\binom{11}5-9\binom{8}{5}+36-1=-7,$$
hence $\LL$ is linearly special.
Furthermore, it is possible to prove that 
the base locus of $\LL$ contains the $9$ double rational normal curves through each set of $8$  points and, 
assuming that each of their contribution is the same as the contribution of a double line (that is $1$), one gets exactly:
$\ldim(\LL)+9=2=\dim(\LL).$
\end{example}

The above examples, besides their intrinsic interest, show that the existence in the base locus of multiple rational normal curves, 
that are not removable by Cremona transformations,  plays an important role.
Hence it seems natural to extend
our definition of dimension of a linear system, 
taking into account also the contribution of such curves.
We plan to develop furtherly these ideas.

On the other hand, as already noticed by Laface and Ugaglia in $\PP^3$,
also the existence of quadric hypersurfaces passing through $9$ general points in the base locus 
can give contribution to the speciality of a linear system.
In this case it does not seem very clear 
how to quantify such contribution.
Consider for example the following list, which contains all the special 
linear systems in $\PP^3$ of degree $10$ and with at most quintuple
points (see \cite[Table 4]{ballico-brambilla-caruso-sala}).
In the columns we write, respectively, the expected dimension, the dimension, the expected base locus,
the residual  system (which is non-special), and the difference between the dimension and the virtual dimension.

\medskip

\begin{tabular}{|c|c|c|c|c|c|c|}
\hline    
 & $\mathrm{edim}$ & $\dim$ & base locus& residual& $\dim-\vdim$\\
\hline
$\LL_{3,10}(5^9)$& $-1$ & 0 & $5Q$& $\emptyset$&30\\
$\LL_{3,10}(5^8,4)$&  $-1$& 1&$4Q$&$\LL_{3,2}(1^8)$&16\\
$\LL_{3,10}(5^8,3,2)$&  $-1$ & 0&$3Q_1\cup 2Q_2$&$\emptyset$&9\\
$\LL_{3,10}(5^8,3)$&  $-1$& 2&$3Q$&$\LL_{3,4}(2^8)$&7\\
$\LL_{3,10}(5^8,2^2)$&  $-1$& 1& $2Q_1\cup 2Q_2$ &$\LL_{3,2}(1^8)$&4\\
$\LL_{3,10}(5^8,2)$& 1 & 3&$2Q$&$\LL_{3,6}(3^8)$&2\\
$\LL_{3,10}(5^7,4^2,2)$&  $-1$& 1&$2Q$&$\LL_{3,6}(3^7,2^3)$&5\\
$\LL_{3,10}(5^7,4,3^2)$& 0  & 1&$Q_1\cup Q_2$&$\LL_{3,6}(3^7,2^3)$&1\\
$\LL_{3,10}(5^7,4^2)$&0  & 5&$2Q$& $\LL_{3,6}(3^7,2^2)$&5\\
\hline
\end{tabular}

\medskip

Special linear systems which contain fixed quadrics 
through simple points appear also in $\PP^4$
(for example $\LL_{4,4}(2^{14})$ and $\LL_{4,6}(3^{14})$),
but quite surprisingly
we are not able to find examples in $\PP^n$ for $n\ge5$.
Understanding better this phenomenon is another goal 
of our future work.

\subsection{Divisors on $\overline{M_{0,n}}$}
\label{section6.3}

Let $\overline{M_{0,n}}$ be the moduli space of stable rational curves with $n$ marked points. 
Kapranov's construction identifies it with a projective variety isomorphic to the projective space $\PP^{n-3}$ 
successively blown up along $L_{I(r)}$, 
$r$-dimensional cycles spanned by $(r+1)$-subsets of a set with $n-1$ general points, 
for $r$ increasing from $0$ to $n-4$. 
In our paper, see Section \ref{up to $n+2$}, $\overline{M_{0,n}}$ is denoted  by $X^{n-3}_{(n-4)}$ for $s=n-1$. 
This space has been well studied, however basic questions are still open.

 We recall here Fulton's conjectures, concerning the description of the Nef cone and the 
Effective cone of $\overline{M_{0,n}}$. Fulton's weak conjecture states that $1$-dimensional boundary strata, 
whose components are called  $F$-curves, generate the Mori cone of curves, 
$\overline{NE}_{1}(\overline{M_{0,n}})$. This is proven for $n\leq 7$ in \cite{KeMc}.
Fulton's strong conjecture, saying that the boundary divisors generate the effective 
cone of $\overline{M_{0,n}}$, is known to be false. For $n=6$ the effective cone is described in
 \cite{HaTs, Ca} as being spanned by boundary divisors and by the
 Keel-Vermeire divisor. Even though pull-backs of the Keel-Vermeire divisor under the
 forgetful morphism are extremal rays that are not boundary divisors, few things are known
 regarding the effective cone of $\overline{M_{0,n}}$ for $n\geq 7$. We should also mention
 the conjecture of Castravet and Tevelev regarding the effective cone of $\overline{M_{0,n}}$
ù for $n\geq 7$, see \cite{CaTe}.

In our paper, Theorem \ref{monster-theorem} for $s=n-1$ points, computes the dimension of all cohomology 
groups for special type of divisors in $\overline {M_{0,n}}$. 
We prove that it depends exclusively on the dimension of the linear base locus, $L_{I(r)}$, the multiplicity of 
containment, $k_{I(r)}$, and $n$, these three information being elegantly encoded in a binomial formula. 
As showed in the examples of Section \ref{LU conjecture}, we expect a similar result to hold 
for divisors interpolating a higher number of points and having non-linear base locus. 
We point out that the divisors we consider in Section \ref{up to $n+2$} live on $\overline{M_{0,n}}$ 
and not on the blow-up of the projective space in points, suggesting that $\overline{M_{0,n}}$ is the 
natural space where interpolation problems on linear cycles, $L_{I(r)}$, 
based on the set with $n-1$ fixed points should be formulated.
For this we believe that our work extends and connects the algebraic approach of Fr\"oberg and 
the geometric perspective of Chandler and Laface-Ugaglia to the geometry of $\overline{M_{0,n}}$.

We believe that interpolating higher dimensional linear cycles, $L_{I(r)}$ for positive $r$, 
is a possible direction for describing the effective cone of $\overline{M_{0,n}}$. 
On the other hand, showing that the $F$-divisors are globally generated by the techniques that we 
developed here, is a possible approach to the F-Nef conjecture.

\end{document}